\documentclass{mfat}
\pagespan{346}{374}

\usepackage{mmap}
\usepackage[utf8]{inputenc}

\newcommand{\R}{\mathbb{R}}

\newcommand{\N}{\mathbb{N}}

\newcommand{\K}{\mathcal{K}}

\newcommand{\e}{\varepsilon}
\newcommand{\Lb}{\mathcal{L}}

\def\one {1\kern -1mm {\text {\rm I}}}

\newcommand{\esssup}{\mathrm{ess}\sup}

\newcommand{\bfrac}[2]{\genfrac{}{}{0pt}{}{#1}{#2}}

\newtheorem{Theorem}{Theorem}[section]
\newtheorem{Proposition}[Theorem]{Proposition}
\newtheorem{Corollary}[Theorem]{Corollary}
\newtheorem{Lemma}[Theorem]{Lemma}
\newtheorem{Remark}[Theorem]{Remark}
\newtheorem{Definition}[Theorem]{Definition}
\newtheorem{Example}[Theorem]{Example}

\numberwithin{equation}{section}

\begin{document}

\title[Evolution of states and mesoscopic scaling for two-component $\ldots $]
    {Evolution of states and mesoscopic scaling for two-component birth-and-death dynamics in continuum}

\author{Martin Friesen}
\address{Department of Mathematics, Bielefeld University, Germany}
\email{mfriesen@math.uni-bielefeld.de}

\author{Oleksandr Kutoviy}
\address{Department of Mathematics, Bielefeld University, Germany}
\email{kutoviy@math.uni-bielefeld.de}

\subjclass[2010]{35Q84, 35Q83, 47D06, 60K35, 82C22}
\date{30/08/2016;\ \  Revised 01.10.2016.}
\keywords{Interacting particle systems, Fokker-Planck equations,
Vlasov scaling, mesoscopic scaling}

\begin{abstract}
  Two coupled spatial birth-and-death Markov evolutions on $\R^d$ are
  obtained as unique weak solutions to the associated Fokker-Planck
  equations. Such solutions are constructed by its associated sequence
  of correlation functions satisfying the so-called
  Ruelle-bound. Using the general scheme of Vlasov scaling we are able
  to derive a system of non-linear, non-local mesoscopic equations
  describing the effective density of the particle system. The results
  are applied to several models of ecology and biology.
\end{abstract}

\maketitle

\section{Introduction}
In the last years we observe an increasing interest to interacting particle systems in the continuum in regard to particular models of ecology, biology and
social sciences, cf. \cite{BCFKKO14,BP99, DL05, FFHKKK15, SEW05}.
An important part of the general theory of interacting particle systems in the continuum is related to the construction of the associated dynamics
and the study of their scalings. In particular, from the point of view of applications it is worthy to investigate the corresponding mesoscopic equations.
Within this framework, each particle is described by its position $x \in \R^d$ and the collection of all particles by the configuration $\gamma$.
Assuming that each two particles cannot occupy the same position and all particles are indistinguishable,
we will choose the configuration space $\Gamma$ to be the collection of all admissible particle configurations $\gamma$, i.e.
\[
 \Gamma = \{ \gamma \subset \R^d \ | \ |\gamma \cap K| < \infty \text{ for all compacts } K \subset \R^d\}.
\]
Here and subsequently $|A|$ denotes the number of elements in the set $A \subset \R^d$.
It is assumed that the microscopic dynamics consists of two elementary events. Namely, the death of a particle $(\gamma \longmapsto \gamma \backslash \{x\})$
and the birth of a particle $(\gamma \longmapsto \gamma \cup \{x\})$. A detailed analysis of such birth-and-death dynamics can be found in \cite{FKK12},
see also the references therein. However, many particular models from ecology and biology require that we have at least two different types of particles.

The aim of this work is to extend the known results for two-component interacting particle systems. Since two particles of different types cannot occupy
the same position, a proper state space for the dynamics is naturally given by the configuration space
\[
 \Gamma^2 := \{ (\gamma^+, \gamma^-) \in \Gamma \times \Gamma\ | \ \gamma^+ \cap \gamma^- = \emptyset \},
\]
cf. \cite{FKO13}. The stochastic time-evolution is incorporated in the particular form of the associated Markov (pre-)generator
$L$. The fact that particles can only die or create new particles leads to the special form of the corresponding generator $L = L^+ + L^-$,
where both operators $L^{\pm}$ are given on polynomially bounded cylinder functions $F$ on $\Gamma^2$ by\vspace*{-1mm}
\begin{align}\label{PHDIPS:04}
 (L^{-}F)(\gamma) = &\ \ \sum _{x \in \gamma^-}d^-(x,\gamma^+, \gamma^- \backslash \{x\})(F(\gamma^+, \gamma^- \backslash \{x\}) - F(\gamma))
 \\ \notag &+ \int _{\R^d}b^-(x,\gamma)(F(\gamma^+, \gamma^- \cup \{x\}) - F(\gamma))\,dx
\end{align}
and

\vspace*{-5mm}
\begin{align}\label{PHDIPS:05}
 (L^{+}F)(\gamma) = &\ \ \sum _{x \in \gamma^+}d^+(x,\gamma^+ \backslash \{x\}, \gamma^-)(F(\gamma^+ \backslash \{x\}, \gamma^-) - F(\gamma))
 \\ \notag &+ \int _{\R^d}b^+(x,\gamma)(F(\gamma^+ \cup \{x\}, \gamma^-) - F(\gamma))\,dx.
\end{align}
For simplicity of notation, we write $\gamma = (\gamma^+, \gamma^-)$, $\gamma^{\pm} \backslash x$ and $\gamma^{\pm} \cup x$ instead of
$\gamma^{\pm} \cup \{x \}$ and $\gamma^{\pm} \backslash \{ x \}$, respectively.
The birth-and-death Markov process associated to the operator $L$, provided it exists, consists of two elementary events.
The death intensities $d^{\pm}(x,\gamma) \geq 0$ determine the probability that the particle $x \in \gamma^{\pm}$ disappears from the configuration $\gamma^{\pm}$.
The birth intensities $b^{\pm}(x,\gamma) \geq 0$ determine the probability for a new particle to appear at $x \in \R^d$.
Multi-species birth-and-death dynamics in the continuum are especially important in ecological, biological and physical applications. For example, the famous
Widom-Rowlinson approach to the phase transition in the continuum is based on a two-component classical gas model. This approach was extended to more general
Potts-type systems (see \cite{GH96}). Recently, the dynamical versions of these models were studied in \cite{FKK15WRMODEL}. The corresponding Markov
statistical dynamics was constructed with the help of Glauber dynamics on $\Gamma$. The dynamical phase transition was shown there in the mesoscopic limit of this dynamics.
In contrast to that work we are able to construct the corresponding dynamics for all $t \geq 0$.
Different kinds of other models, additional comments and explanations for the modeling of tumor growth can be found in \cite{FFHKKK15} and the references therein.

The Markov process associated with the (pre-)generator $L$ may be obtained as the unique solution to certain stochastic differential equations, cf. \cite{GKT06}.
Unfortunately the conditions given in the latter paper are too restrictive for some applications.
A different, functional analytic, approach to the construction of the processes is related to the construction of solutions
to the (backward) Kolmogorov equation on functions $F: \Gamma^2 \longrightarrow \R$
\begin{align}\label{INTRO:00}
 \frac{\partial F_t}{\partial t} = LF_t, \quad F_t|_{t= 0} = F_0, \quad t \geq 0.
\end{align}
Until now, however, there exist up to our knowledge no techniques which can be used to solve \eqref{INTRO:00} in any space of continuous functions on $\Gamma^2$.
The construction of an associated Markov process with general birth-and-death rates is, therefore, still unsolved.
This difficulty comes from the necessity to control the number of particles in any bounded domain in $\R^d$.
One therefore tries to construct an associated evolution of probability measures on $\Gamma^2$, that is the one-dimensional distributions of the Markov process.
The evolution of one-dimensional distributions is an essential problem on its own right.

A probability measure $\mu$ on $\Gamma^2$ describes in this setting the distribution of particles located in $\R^d$.
Functions $F: \Gamma^2 \longrightarrow \R$ are called observables and
\[
 \langle F, \mu\rangle := \int _{\Gamma^2}F(\gamma)\,d\mu(\gamma)
\]
are considered as measurable quantities of the particle system. Above duality yields a weak formulation for the dual equation to \eqref{INTRO:00}, i.e.
\begin{align}\label{INTRO:01}
 \frac{d}{dt} \langle F, \mu_t \rangle = \langle LF, \mu_t\rangle, \quad \mu_t|_{t=0} = \mu_0, \quad t \geq 0.
\end{align}
It is known as the (forward) Kolmogorov equation. In the physical literature \eqref{INTRO:01} is also called Fokker-Planck equation.
Because of the Markovian property of the operator $L$ we expect that solutions to \eqref{INTRO:01}
can be constructed in the class of probability measures on $\Gamma^2$ and hence determine an evolution of states.
Solutions to the Fokker-Planck equation determine the distributions of the associated Markov process, provided, of course, it exists.
Hence $(\mu_t)_{t \geq 0}$ is referred to as the 'statistical description' of the birth-and-death process.

For many, from the point of view of applications, interesting interacting particle systems it seems impossible to solve \eqref{INTRO:01} for any initial state $\mu_0$.
Note that the particular choice $\mu_0 = \delta_{\gamma}$, where $\gamma \in \Gamma^2$, is related with the construction of an associated Markov process on $\Gamma^2$.
It is necessary to restrict ourselves to a certain set of admissible initial states. Here we see a crucial difference comparing with Markov stochastic processes framework
where the evolution can be constructed for any initial data.
One possible class of admissible initial states is given by the collection of all sub-Poissionian states, cf. \cite{KKM08} for the one-component case.
Such states are described in terms of their associated correlation functions $(k_{\mu}^{(n,m)})_{n,m=0}^{\infty}$.
For the convenience of the reader we repeat the definition of correlation functions and their properties in the next section.
The one-component case with general location space $X$ (here $X = \R^d$) was considered in the pioneering works \cite{KK02, L73,L75} to mention some.
The collection of correlation functions $(k_{\mu}^{(n,m)})_{n,m=0}^{\infty}$ is called sub-Poissonian if it satisfies the Ruelle bound
\[
 k_{\mu}^{(n,m)}(x_1, \dots, x_n;y_1, \dots, y_m) \leq A e^{\alpha n}e^{\beta m}, \quad  n,m \in \N_0
\]
for some constants $A > 0$ and $\alpha,\beta \in \R$.
A sub-Poissonian state $\mu$ is therefore a probability measure on $\Gamma^2$ for which the associated correlation functions exist and are sub-Poissonian.
The Cauchy problem \eqref{INTRO:01} is then formally equivalent to a system of Banach space-valued differential equations
\begin{align}\label{PHDIPS:11}
 \frac{\partial k^{(n,m)}_t}{\partial t} = (L^{\Delta}k_t)^{(n,m)}, \quad  k^{(n,m)}_t|_{t=0} = k^{(n,m)}_0, \quad  n,m \in \N_0,
\end{align}
where $L^{\Delta}$ can be seen as an infinite operator-valued matrix. We provide, under some reasonable conditions, existence and uniqueness of weak solutions
to \eqref{PHDIPS:11} and consequently derive existence and uniqueness of weak solutions to \eqref{INTRO:01}.
Note that a solution $(k^{(n,m)}_t)_{n,m=0}^{\infty}$ to \eqref{PHDIPS:11}, in general, does not need to be the correlation function of some state $\mu_t$ on $\Gamma^2$.
For such property additional analysis is required which was only achieved for particular models (see e.g. \cite{KK16, KKM08, KKP08}).

This work is organized as follows. Notations and preliminary
results are introduced in the second section. The third section is
devoted to the construction of solutions to the pre-dual equation
to \eqref{PHDIPS:11}. To this end we first prove that the pre-dual
equation is well-posed and show that solutions can be obtained by
the action of an analytic semigroup of contractions, see Theorem
\ref{PHDIPSTH:01}. The adjoint semigroup is related to existence
and uniqueness of solutions to \eqref{PHDIPS:11}. It is shown that
such solutions are the correlation functions corresponding to an
evolution of states, see Theorem \ref{GMCSTH:01} and Proposition
\ref{PHDIPSTH:02}. The fourth section extends the Vlasov scaling,
cf. \cite{FKK12}, to the case of two-component Markov evolutions,
see Theorem \ref{MCSLTH:00}--\ref{MCSLTH:02}. Examples are
considered in the last section.

\section{Preliminaries}
\subsection*{Space of finite configuration. One-component case}
Denote by $\Gamma_0$ the configuration space of all finite subsets of $\R^d$, i.e.
\[
 \Gamma_0 = \{ \eta \subset \R^d\ | \ |\eta| < \infty \},
\]
where $|\eta|$ denotes the number of particles in the set $\eta$. This space has a natural decomposition into $n$-particle spaces,
$\Gamma_0 = \bigsqcup _{n=0}^{\infty}\Gamma_0^{(n)}$, where $\Gamma_0^{(n)} = \{ \eta \subset \R^d\ | \ |\eta| = n\}, \ \ n \geq 1$
and in the case $n = 0$ we set $\Gamma_0^{(0)} = \{\emptyset\}$.
Denote by $\widetilde{(\R^d)^n}$ the space of all sequences $(x_1, \dots, x_n) \in (\R^d)^n$ with $x_i \neq x_j$ for $i \neq j$.
$\Gamma_0^{(n)}$ can be identified with $\widetilde{(\R^d)^n}$ via the symmetrization map
\[
 \mathrm{sym}_n: \widetilde{(\R^d)^n} \longrightarrow \Gamma_0^{(n)}, \ (x_1, \dots, x_n) \longmapsto \{x_1, \dots, x_n\},
\]
which defines a topology on $\Gamma_0^{(n)}$. Namely, a set $A
\subset \Gamma_0^{(n)}$ is open if and only if
$\mathrm{sym}_n^{-1}(A) \subset \widetilde{(\R^d)^n}$ is open. On
$\Gamma_0$ we define the topology of disjoint unions, i.e. a set
$A \subset \Gamma_0$ is open iff $A \cap \Gamma_0^{(n)}$ is open
in $\Gamma_0^{(n)}$ for all $n \in \N$. Then $\Gamma_0$ is a
locally compact Polish space. Let $\mathcal{B}(\Gamma_0)$ stand
for the Borel-$\sigma$-algebra on $\Gamma_0$. Denote by $dx$ the
Lebesgue measure on $\R^d$ and by $d^{\otimes n} x$ the product
measure on $(\R^d)^n$. Let $d^{(n)}x$ be the image measure
$\Gamma_0^{(n)}$ w.r.t. $\mathrm{sym}_n$. The Lebesgue-Poisson
measure is defined by
\[
 \lambda = \delta_{\emptyset} + \sum _{n=1}^{\infty}\frac{1}{n!}d^{(n)}x.
\]
Given a measurable function $G:\Gamma_0 \times \Gamma_0 \times \Gamma_0 \longrightarrow \R$, then
\begin{align}\label{PHD:00}
 \int _{\Gamma_0}\sum _{\xi \subset \eta}G(\xi, \eta \backslash \xi,\eta)\,d\lambda(\eta) =
 \int _{\Gamma_0}\int _{\Gamma_0}G(\xi, \eta, \eta \cup \xi)\,d\lambda(\xi)d\lambda(\eta)
\end{align}
holds, provided one side of the equality is finite for $|G|$ (see e.g. \cite{KK02}).
For a given measurable function $f: \R^d \longrightarrow \R$ the Lebesgue-Poisson exponential is defined by
\[
 e_{\lambda}(f;\eta) := \prod _{x \in \eta}f(x)
\]
and satisfies the combinatorial formula
\[
 \sum _{\xi \subset \eta}e_{\lambda}(f;\xi) = e_{\lambda}(1+f;\eta).
\]
For computations we will use the identity
\[
 \int _{\Gamma_0}e_{\lambda}(f;\eta)\,d\lambda(\eta) = \exp\Bigg( \int _{\R^d}f(x)\,dx\Bigg),
\]
whenever $f \in L^1(\R^d)$.

\subsection*{Space of finite configurations. Two-component case}
In this part we want to provide a brief overview for the two-component configuration space $\Gamma_0^2$, see \cite{F11TC, FKO13} and the references therein.
We suppose that two different particles cannot occupy the same location $x \in \R^d$ and therefore define the two-component state space by
\[
 \Gamma_0^2 = \{ (\eta^+, \eta^-) \in \Gamma_0 \times \Gamma_0 \ | \ \eta^+ \cap \eta^- = \emptyset \}.
\]
Here and in the following we simply write $\eta$ instead of $(\eta^+, \eta^-) \in \Gamma_0^2$ if no confusion may arise. Set operations
$\xi \subset \eta$, $\xi \cup \eta$ and $\eta \backslash \xi$ are defined component-wise, i.e. by $\xi^{\pm}\subset \eta^{\pm}$, etc.
For $\eta \in \Gamma_0^2$ we let $|\eta| := |\eta^+| + |\eta^-|$. The space $\Gamma_0^2$ has the natural decomposition
\[
 \Gamma_0^2 = \bigsqcup _{n,m=0}^{\infty} \Gamma_0^{(n,m)},
\]
where $\Gamma_0^{(n,m)} = \{ (\eta^+, \eta^-) \subset \R^d \times
\R^d \ | \ \eta^+ \cap \eta^- = \emptyset, \ |\eta^+| = n, \
|\eta^-| = m\}$. The topology on $\Gamma_0^{(n,m)}$ and
$\Gamma_0^2$ is defined in the same way as for $\Gamma_0^{(n)}$
and $\Gamma_0$. It is not difficult to see that this topology is
the same as the subspace topology of the product topology on
$\Gamma_0 \times \Gamma_0$. In particular $\Gamma_0^2$ is a Polish
space. The product measure $\lambda \otimes \lambda$ on $\Gamma_0 \times \Gamma_0$
satisfies
\begin{align}\label{EQ:00}
 \lambda \otimes \lambda( \{ (\eta^+, \eta^-) \in \Gamma_0 \times \Gamma_0 \ | \ \eta^+ \cap \eta^- \neq \emptyset \} ) = 0,
\end{align}
see e.g. \cite{F09}. The Lebesgue-Poisson measure $\lambda^2$ on $\Gamma_0^2$ is
defined as the restriction of $\lambda \otimes \lambda$ to
$\Gamma_0^2$. Since no confusion may arise we use the same
notation $\lambda$ for the Lebesgue-Poisson measure $\lambda^2$ on
$\Gamma_0^2$ and $\lambda$ on $\Gamma_0$. One can show that
integrals w.r.t. integrable functions $G: \Gamma_0^2 \longrightarrow \R$ can be also written as
\begin{align}\label{EQ:01}
 \int _{\Gamma_0^2}G(\eta)\,d\lambda(\eta) = \int _{\Gamma_0}
 \int _{\Gamma_0}G(\eta^+, \eta^-)\,d\lambda(\eta^+)d\lambda(\eta^-).
\end{align}
By \eqref{PHD:00}, \eqref{EQ:00} and, \eqref{EQ:01} we see that the two-component Lebesgue-Poisson measure satisfies
\begin{align}\label{PHD:01}
 \int _{\Gamma_0^2} \sum _{\xi \subset \eta}G(\xi, \eta \backslash \xi,\eta)\,d\lambda(\eta)
 = \int _{\Gamma_0^2}\int _{\Gamma_0^2}G(\xi,\eta,\eta \cup \xi)\,d\lambda(\xi)d\lambda(\eta)
\end{align}
provided one side of the equality is finite for $|G|$.
A set $M \subset \Gamma_0^2$ is called bounded if there exist a compact $\Lambda \subset \R^d$ and $N \in \N_0$ such that
\[
 M \subset \{ (\eta^+, \eta^-) \in \Gamma_0^2 \ | \ \eta^{\pm} \subset \Lambda, \ |\eta| \leq N\}.
\]
A function $G$ is said to have bounded support if it is supported
on a bounded set. Denote by $B_{bs}(\Gamma_0^2)$ the space of all
bounded, measurable functions having bounded support. We say that
$H: \Gamma_0^2 \longrightarrow \R$ is locally integrable if it is
integrable for any bounded set. This is the same as regarding that
the integral $\int
_{\Gamma_0^2}G(\eta)|H(\eta)|\,d\lambda(\eta)$ is finite
for all non-negative functions $G \in B_{bs}(\Gamma_0^2)$.

\subsection*{Space of locally finite configurations}
The one-component configuration space $\Gamma$ consists of all locally finite subsets $\gamma$ of $\mathbb{R}^d$ and it is equipped with the smallest topology
for which $\gamma \longmapsto \sum _{x \in \gamma}f(x)$ is continuous for any continuous function $f: \mathbb{R}^d \longrightarrow \mathbb{R}$ having
compact support. Then $\Gamma$ is a Polish space on which we consider the associated Borel-$\sigma$-algebra.
Given $\alpha \in \R$, the Poisson measure $\pi_{e^{\alpha}}$ on $\Gamma$ is defined as the unique probability measure having the Laplace transform
\[
 \int _{\Gamma}e^{\sum _{x \in \gamma}f(x)}\,d \pi_{e^{\alpha}}(\gamma) = \exp \Bigg( e^{\alpha}\int _{\R^d}(e^{f(x)} - 1)\,dx \Bigg),
\]
see \cite{KK02}.
For two-component systems we consider the product space $\Gamma \times \Gamma$ equipped with the product topology.
For simplicity of notation we write $\gamma := (\gamma^+, \gamma^-)$.
Likewise we use for $\eta \in \Gamma_0^2$ the notation $\eta \subset \gamma$ and $\gamma \backslash \eta$
by which we mean that $\eta^+ \subset \gamma^+, \eta^- \subset \gamma^-$ and $\gamma^+ \backslash \eta^+$, $\gamma^- \backslash \eta^-$.

By \cite{F09} it follows that the product measure $\pi_{e^{\alpha}} \otimes \pi_{e^{\beta}}$ on $\Gamma\times \Gamma$ with $\alpha,\beta \in \mathbb{R}$ is concentrated on
\[
 \Gamma^2 := \{ (\gamma^+, \gamma^-) \in \Gamma \times \Gamma\ | \ \gamma^+ \cap \gamma^- = \emptyset \},
\]
i.e. $\pi_{e^{\alpha}} \otimes \pi_{e^{\beta}}( \Gamma^2 ) = 1$.
Here $\Gamma^2$ is the configuration space for Markov evolutions of particles with two different types (see \cite{FKO13}).
It is equipped with the restriction of the product topology on $\Gamma \times \Gamma$.
The additional restriction is due to the fact that any two particles cannot occupy the same position.
The two-component Poisson measure is defined by $\pi_{e^{\alpha},e^{\beta}} := \pi_{e^{\alpha}}\otimes \pi_{e^{\beta}}|_{\Gamma^2}$.
Below we describe the class of measures for which equation \eqref{INTRO:01} will be considered.

Let $\Lambda_+, \Lambda_- \subset \R^d$ be two compacts, define
\[
 (\Gamma \times \Gamma)_{\Lambda_+, \Lambda_-} = \{ (\gamma^+, \gamma^-) \in \Gamma \times \Gamma \ | \ \gamma^{\pm} \subset \Lambda_{\pm} \},
\]
and set projections $p_{\Lambda^+, \Lambda^-}: \Gamma \times \Gamma \longrightarrow (\Gamma \times \Gamma)_{\Lambda_+, \Lambda_-}, (\gamma^+, \gamma^-) \longmapsto (\gamma^+ \cap \Lambda_+, \gamma^- \cap \Lambda_-)$
A probability measure $\mu$ on $\Gamma \times \Gamma$ is said to be locally absolutely continuous w.r.t. the Poisson measure if
$\mu^{\Lambda_+,\Lambda_-} := \mu \circ p_{\Lambda_+, \Lambda_-}^{-1}$ is absolutely continuous w.r.t. $\pi_{e^{\alpha}} \otimes \pi_{e^{\beta}} \circ p_{\Lambda_+,\Lambda_-}^{-1}$.
Note that this definition is independent of the particular choice of $\alpha$ and $\beta$.
Any such measure $\mu$ satisfies $\mu(\Gamma^2) = 1$ (see \cite{F09}).

For $G \in B_{bs}(\Gamma_0^2)$ define the $K$-transform by
\begin{align}\label{GMCS:08}
 (\mathbb{K}G)(\gamma) = \sum _{\eta \Subset \gamma}G(\eta),
\end{align}
where $\eta \Subset \gamma$ means that the sum only runs over all
finite subsets $\eta$ of $\gamma$. Then $\mathbb{K}G$ is a
polynomially bounded cylinder function, i.e. there exists a
compact $\Lambda \subset \R^d$ and constants $C > 0, \ N \in \N$
such that $(\mathbb{K}G)(\gamma^+, \gamma^-) =
(\mathbb{K}G)(\gamma^+ \cap \Lambda, \gamma^- \cap \Lambda)$ and
\[
 |(\mathbb{K}G)(\gamma)| \leq C(1 + |\gamma^+ \cap \Lambda| + |\gamma^-\cap \Lambda|)^{N}, \quad \gamma \in \Gamma^2
\]
holds. The $\mathbb{K}$-transform $\mathbb{K}: B_{bs}(\Gamma_0^2)
\longrightarrow \mathcal{FP}(\Gamma^2) := K(B_{bs}(\Gamma_0^2))$
is a positivity preserving isomorphism with inverse given by
\[
 (\mathbb{K}^{-1}F)(\eta) := \sum _{\xi \subset \eta}(-1)^{|\eta \backslash \xi|} F(\xi), \quad \eta \in \Gamma_0^2.
\]
Denote by $\mathbb{K}_0$ the restriction of $\mathbb{K}$
determined by evaluating $\mathbb{K}G$ only on $\Gamma_0^2$. Its
inverse is then denoted by $\mathbb{K}_0^{-1}$. Let $\mu$ be locally absolutely continuous w.r.t. the Poisson measure and assume that it has finite local moments, i.e.
\[
 \int _{\Gamma^2}|\gamma^+ \cap \Lambda_+|^n |\gamma^- \cap \Lambda_-|^n \,d \mu(\gamma) < \infty
\]
for all $n \in \N_0$ and all compacts $\Lambda_{\pm} \subset \R^d$. It can be shown that there exists a function $k_{\mu}: \Gamma_0^2 \longrightarrow \R_+$
satisfying the relation
\begin{align}\label{PHDIPS:17}
 \int _{\Gamma^2}\mathbb{K}G(\gamma)\,d\mu(\gamma) = \int _{\Gamma_0^2}G(\eta)k_{\mu}(\eta)\,d \lambda(\eta),
 \quad G \in B_{bs}(\Gamma_0^2),
\end{align}
see \cite{F09}. This function is the so-called correlation function associated with $\mu$.
In such a case the $\mathbb{K}$-transform can be uniquely extended to a bounded linear operator $\mathbb{K}:
L^1(\Gamma_0^2, k_{\mu}d\lambda) \longrightarrow L^1(\Gamma^2,
d\mu)$ such that $\Vert \mathbb{K} G \Vert_{L^1(\Gamma^2, d\mu)}
\leq \Vert G \Vert_{L^1(\Gamma_0^2, k_{\mu} d\lambda)}$ and
\eqref{GMCS:08} holds for $\mu$-a.a. $\gamma \in \Gamma^2$ (repeat e.g. the arguments given in \cite{KK02}).
Let $\Lb_{k_{\mu}} := L^1(\Gamma_0^2, k_{\mu}d\lambda)$, for
$k_{\mu}(\eta) := e^{\alpha|\eta^+|}e^{\beta|\eta^-|}$ we also
write $\Lb_{k_{\mu}} \equiv \Lb_{\alpha,\beta}$. A function $G \in
\Lb_{\alpha,\beta}$ is called positive definite if $\mathbb{K}G
\geq 0$. Denote by $\Lb_{\alpha,\beta}^+$ the cone of all positive
definite functions.

The duality $\langle G, k\rangle := \int
_{\Gamma_0^2}G(\eta)k(\eta)\,d\lambda(\eta)$ is used for
the identification $\Lb_{\alpha,\beta}^* \cong \K_{\alpha,\beta}$.
Here $\mathcal{K}_{\alpha,\beta}$ stands for the Banach space of
all equivalence classes of functions $k: \Gamma_0^2
\longrightarrow \R$ equipped with norm
\[
 \Vert k \Vert_{\K_{\alpha,\beta}} = \esssup _{\eta \in \Gamma_0^2}\ |k(\eta)|e^{-\alpha|\eta^+|}e^{-\beta|\eta^-|}.
\]
A function $k \in \K_{\alpha,\beta}$ is said to be positive definite if $\langle G, k \rangle \geq 0$ holds for all
$G \in B_{bs}^+(\Gamma_0^2) := B_{bs}(\Gamma_0^2) \cap \Lb_{\alpha,\beta}^+$. Let $\K_{\alpha,\beta}^+$ be the space of all positive definite functions in $\K_{\alpha,\beta}$.
Denote by $\mathcal{P}_{\alpha,\beta}$ the collection of all probability measures having finite local moments and being locally absolutely continuous
w.r.t. the Poisson measure such that for each $\mu \in \mathcal{P}_{\alpha, \beta}$ its correlation function $k_{\mu}$ belongs to $\mathcal{K}_{\alpha,\beta}$.
\begin{Theorem}\label{GMCSTH:05}\cite{KK02, L73,L75}
 The following assertions are satisfied.
 \begin{enumerate}
  \item Let $\mu \in \mathcal{P}_{\alpha,\beta}$ with correlation function $k_{\mu}$. Then $k_{\mu}(\emptyset) = 1$ and $k_{\mu}$ is positive definite.
  \item Conversely, let $k \in \K_{\alpha,\beta}^+$ and assume that $k(\emptyset) = 1$ holds.
  Then there exists a unique $\mu \in \mathcal{P}_{\alpha,\beta}$ with $k$ as its correlation function.
 \end{enumerate}
\end{Theorem}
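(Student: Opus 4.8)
The plan is to prove the two assertions by separate methods: the first follows directly by testing the defining relation \eqref{PHDIPS:17} against suitable functions, whereas the second is the substantive direction, a two-component analogue of Lenard's reconstruction theorem. For assertion (1) I would first substitute $G = \one_{\{\emptyset\}} \in B_{bs}(\Gamma_0^2)$ into \eqref{PHDIPS:17}. Since $(\mathbb{K}\one_{\{\emptyset\}})(\gamma) = \sum_{\eta \Subset \gamma}\one_{\{\emptyset\}}(\eta) = 1$ for every $\gamma$, the left-hand side equals $\mu(\Gamma^2) = 1$, while the right-hand side reduces to $k_\mu(\emptyset)$ because the Lebesgue-Poisson measure assigns unit mass to the empty configuration; hence $k_\mu(\emptyset) = 1$. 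For positive definiteness, take any $G \in B_{bs}^+(\Gamma_0^2)$, so that $\mathbb{K}G \geq 0$ by definition of the cone; then \eqref{PHDIPS:17} gives $\langle G, k_\mu\rangle = \int_{\Gamma^2}(\mathbb{K}G)(\gamma)\,d\mu(\gamma) \geq 0$ because $\mu$ is a nonnegative measure, which is precisely the positive definiteness of $k_\mu$.

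For assertion (2) I would reconstruct $\mu$ as a projective limit of local measures. Fixing compacts $\Lambda_\pm \subset \R^d$, I would define on the configurations $\eta$ supported in $\Lambda_\pm$ a candidate local density by the localized inverse $\mathbb{K}$-transform
\[
 R^{\Lambda_+,\Lambda_-}(\eta) := \int_{\Gamma_0^2}\one_{\Lambda_+,\Lambda_-}(\zeta)(-1)^{|\zeta|}k(\eta \cup \zeta)\,d\lambda(\zeta),
\]
where $\one_{\Lambda_+,\Lambda_-}$ enforces $\zeta^\pm \subset \Lambda_\pm$. The bound $|k(\eta)| \leq \Vert k\Vert_{\K_{\alpha,\beta}} e^{\alpha|\eta^+|}e^{\beta|\eta^-|}$ built into $\K_{\alpha,\beta}^+$ guarantees absolute convergence, since integrating $e^{\alpha|\zeta^+|}e^{\beta|\zeta^-|}$ over $\zeta^\pm \subset \Lambda_\pm$ yields the finite factor $\exp(e^\alpha|\Lambda_+| + e^\beta|\Lambda_-|)$. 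Normalization to a probability density follows from $k(\emptyset) = 1$ via the combinatorial identity \eqref{PHD:01}, and nonnegativity $R^{\Lambda_+,\Lambda_-} \geq 0$ is the decisive point, to be drawn from the positive definiteness of $k$. These densities define local probability measures $\mu^{\Lambda_+,\Lambda_-}$, which a computation based on \eqref{PHD:01} shows to be Kolmogorov-compatible under the projections $p_{\Lambda_+,\Lambda_-}$; since the $R^{\Lambda_+,\Lambda_-}$ are densities relative to the Lebesgue-Poisson measure, each $\mu^{\Lambda_+,\Lambda_-}$ is absolutely continuous with respect to the projected Poisson measure. The Kolmogorov extension theorem then produces a measure $\mu$ on $\Gamma \times \Gamma$ with these projections, and by \eqref{EQ:00} together with \cite{F09} it is concentrated on $\Gamma^2$. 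The Ruelle-type bound transfers to finite local moments, so $\mu \in \mathcal{P}_{\alpha,\beta}$, and reversing the computation underlying \eqref{PHDIPS:17} identifies $k$ as its correlation function. For uniqueness I would argue that the bound defining $\K_{\alpha,\beta}$ renders the associated moment problem determinate: any two measures in $\mathcal{P}_{\alpha,\beta}$ with correlation function $k$ induce the same functional $G \mapsto \langle G, k\rangle = \langle \mathbb{K}G, \cdot\rangle$ on $\mathcal{FP}(\Gamma^2)$, and the sub-Poissonian growth forces this functional to have a unique representing measure, so the two coincide.

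The hard part will be the nonnegativity of the reconstructed local densities $R^{\Lambda_+,\Lambda_-}$, which must be extracted from the abstract positive-definiteness hypothesis $\langle G, k\rangle \geq 0$ for $G \in B_{bs}^+(\Gamma_0^2)$. This is the analytic heart of Lenard's theorem: the delicate step is an approximation argument showing that the functional picking out the value $R^{\Lambda_+,\Lambda_-}(\eta)$ can be realized, in the $\Lb_{\alpha,\beta}$-topology, as a limit of pairings $\langle G, k\rangle$ with $G$ in the positive cone $B_{bs}^+(\Gamma_0^2)$, the bound on $\K_{\alpha,\beta}$ justifying every interchange of summation and integration. The remaining ingredients --- consistency of the local family, the Kolmogorov extension, and the verification of the correlation-function identity --- are comparatively routine adaptations of the one-component arguments of \cite{KK02, L73, L75}; the only genuinely two-component feature is the concentration of $\mu$ on the disjointness-constrained space $\Gamma^2$, which is supplied by \eqref{EQ:00}.
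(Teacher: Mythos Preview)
The paper does not prove this theorem: it is stated as a quoted result with citations to \cite{KK02, L73, L75} and no proof environment follows it. Your proposal is a faithful sketch of the standard Lenard reconstruction argument from those references, adapted to the two-component setting --- the defining relation \eqref{PHDIPS:17} for assertion~(1), and for assertion~(2) the localized inverse $\mathbb{K}$-transform to build local densities, positivity via the positive-definiteness hypothesis, consistency and Kolmogorov extension, with concentration on $\Gamma^2$ supplied by \eqref{EQ:00} and \cite{F09}. Since the paper simply imports the result, there is nothing further to compare; your outline is in line with the cited literature, and you have correctly identified the nonnegativity of $R^{\Lambda_+,\Lambda_-}$ as the substantive step.
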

Let $\mu \in \mathcal{P}_{\alpha,\beta}$, then by Fubini's theorem
\[
 \int _{\Gamma^2}\mathbb{K}G(\gamma)\,d\mu(\gamma) = \int _{\Gamma}\int _{\Gamma}
 \mathbb{K}G(\gamma^+, \gamma^-)\,d\mu(\gamma^+,\gamma^-)
\]
holds for all $G \in \Lb_{\alpha,\beta}$.

\section{Construction of dynamics}
Let $L = L^{-} + L^{+}$ be given by \eqref{PHDIPS:04} and \eqref{PHDIPS:05}.
We suppose that the intensities satisfy the condition given below.
\begin{enumerate}
 \item[(A)] For all $x \in \R^d$
 \begin{align}\label{PHDIPS:27}
  \R^d \times \Gamma^2 \ni (x,\gamma) \longmapsto d^+(x, \gamma^+ \backslash x, \gamma^-), d^-(x, \gamma^+, \gamma^- \backslash x), b^+(x,\gamma), b^-(x,\gamma) \in [0, \infty]
 \end{align}
 are measurable w.r.t the product Borel-$\sigma$-algebras on $\R^d \times \Gamma^2$ and $\mathcal{B}(\overline{\R})$, where $\overline{\R} = \R \cup \{ \infty\}$.
 Moreover, their restrictions to $\R^d \times \Gamma_0^2$ take values in $[0,\infty)$ and, for any compact $\Lambda \subset \R^d$ and bounded set $M \subset \Gamma_0^2$
 \begin{align}\label{PHDIPS:21}
  \int _{\Lambda}\int _{M}(d^+(x,\eta) + d^-(x,\eta) + b^+(x,\eta)
  + b^-(x,\eta))\,d\lambda(\eta)dx < \infty.
 \end{align}
\end{enumerate}

\subsection{Quasi-observables}
Introduce the cumulative death intensity by
\[
 M(\eta) := \sum _{x \in \eta^-}d^-(x,\eta^+, \eta^- \backslash x) + \sum _{x \in \eta^+}d^+(x,\eta^+ \backslash x, \eta^-)
\]
and set $c(L, \alpha,\beta;\eta) = c(\eta) = c(\alpha,\beta;\eta)$ by
\begin{align}\label{EQ:02}
 c(L, \alpha,\beta;\eta) := &\ \sum _{x \in \eta^-}\int _{\Gamma_0^2}e^{\alpha|\xi^+|}e^{\beta|\xi^-|}
 |\mathbb{K}_0^{-1}d^-(x,\cdot \cup \eta^+, \cdot \cup \eta^- \backslash x)|(\xi)\,d\lambda(\xi)
  \\ \notag &+ \sum _{x \in \eta^+}\int _{\Gamma_0^2}e^{\alpha|\xi^+|}e^{\beta|\xi^-|}|
  \mathbb{K}_0^{-1}d^+(x,\cdot \cup \eta^+ \backslash x, \cdot \cup \eta^-)|(\xi)\,d\lambda(\xi)
  \\ \notag &+ e^{-\beta}\sum _{x \in \eta^-}\int _{\Gamma_0^2}e^{\alpha|\xi^+|}e^{\beta|\xi^-|}
  |\mathbb{K}_0^{-1}b^-(x,\cdot \cup \eta^+, \cdot \cup \eta^-\backslash x)|(\xi)\,d \lambda(\xi)
  \\ \notag &+ e^{-\alpha}\sum _{x \in \eta^+}\int _{\Gamma_0^2}e^{\alpha|\xi^+|}e^{\beta|\xi^-|}
  |\mathbb{K}_0^{-1}b^+(x,\cdot \cup \eta^+ \backslash x, \cdot \cup \eta^-)|(\xi)\,d\lambda(\xi).
\end{align}
Define a linear mapping on $B_{bs}(\Gamma_0^2)$ by $\widehat{L} := \mathbb{K}_0^{-1}L\mathbb{K}_0$. Using the methods proposed
in \cite{FKK12, FKO13} we can compute $\widehat{L}$. It has the
form $\widehat{L} = A + B$. The latter expressions are given by $(AG)(\eta) = - M(\eta)G(\eta)$ and by
\begin{align*}
 (BG)(\eta) =  &- \sum _{\xi \subsetneq \eta}G(\xi) \sum _{x \in \xi^-} (\mathbb{K}_0^{-1}d^-(x,\cdot \cup \xi^+, \cdot \cup \xi^- \backslash x))(\eta \backslash \xi)
 \\ &- \sum _{\xi \subsetneq \eta}G(\xi) \sum _{x \in \xi^+} (\mathbb{K}_0^{-1}d^+(x,\cdot \cup \xi^+ \backslash x, \cdot \cup \xi^-))(\eta \backslash \xi)
 \\ &+ \sum _{\xi \subset \eta} \int _{\R^d}G(\xi^+, \xi^- \cup x)
 (\mathbb{K}_0^{-1}b^-(x,\cdot \cup \xi^+, \cdot \cup \xi^-))(\eta \backslash \xi)\,dx
 \\ &+ \sum _{\xi \subset \eta} \int _{\R^d}G(\xi^+ \cup x, \xi^-)
 (\mathbb{K}_0^{-1}b^+(x,\cdot \cup \xi^+,\cdot \cup \xi^-))(\eta \backslash \xi)\,dx.
\end{align*}
Below we consider the linear mapping $\widehat{L}$ in one fixed Banach space and construct a semigroup associated to the Cauchy problem for quasi-observables
\begin{align}\label{GMCS:48}
 \frac{\partial G_t}{\partial t} = \widehat{L}G_t, \quad G_t|_{t=0} = G_0 \in \Lb_{\alpha,\beta}.
\end{align}
Note that solutions to \eqref{INTRO:00} are formally related to
\eqref{GMCS:48} by $F_t = \mathbb{K}G_t$. Denote by $\one^*$ the function
given by $\one^*(\eta) := 0^{|\eta|} = \begin{cases}1, & |\eta| =
0
\\ 0, & \text{ otherwise}\end{cases}$.
\begin{Theorem}\label{PHDIPSTH:01}
 Suppose that (A) is satisfied and assume that there exists $\alpha, \beta \in \R$ and a constant $a = a(\alpha,\beta) \in (0,2)$ such that
 \begin{align}\label{PHDIPS:02}
  c(\alpha,\beta;\eta) \leq a(\alpha,\beta)M(\eta), \quad \eta \in \Gamma_0^2
 \end{align}
 holds. Then the following assertions are true:
 \begin{enumerate}
  \item[(a)] The closure of $(\widehat{L}, B_{bs}(\Gamma_0^2))$ is the generator of an analytic semigroup $(\widehat{T}_{\alpha,\beta}(t))_{t \geq 0}$
   of contractions on $\Lb_{\alpha,\beta}$ such that $\widehat{T}_{\alpha,\beta}(t)\one^* = \one^*$ holds. Moreover, the closure is given by
   \[
    D_{\alpha,\beta}(\widehat{L}) = \{ G \in \Lb_{\alpha,\beta} \ | \ M \cdot G \in \Lb_{\alpha,\beta}\}
   \]
   with $\widehat{L} = A + B$ defined as above.
  \item[(b)] Suppose that \eqref{PHDIPS:02} holds for all $\alpha \in (\alpha_*, \alpha^*)$ and $\beta \in (\beta_*, \beta^*)$ with
  $\alpha_* < \alpha^*$ and $\beta_* < \beta^*$, and possibly different constants $a = a(\alpha,\beta)$.
  Then, for any $\alpha' < \alpha$ and $\beta' < \beta$ the space $\Lb_{\alpha,\beta}$ is invariant for $\widehat{T}_{\alpha',\beta'}(t)$
  and $\widehat{T}_{\alpha,\beta}(t) = \widehat{T}_{\alpha',\beta'}(t)|_{\Lb_{\alpha,\beta}}$ holds.
 \end{enumerate}
\end{Theorem}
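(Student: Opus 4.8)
The plan is to realize $\widehat L = A + B$ as a perturbation of the multiplication operator $A$ on $\Lb_{\alpha,\beta} = L^1(\Gamma_0^2, k_{\mu}d\lambda)$ with $k_\mu(\eta) = e^{\alpha|\eta^+|}e^{\beta|\eta^-|}$, and to invoke a perturbation theorem for generators of analytic semigroups. First I would analyze $(A, D_{\alpha,\beta}(\widehat L))$ on the natural domain $D_{\alpha,\beta}(\widehat L) = \{G \in \Lb_{\alpha,\beta} \mid M\cdot G \in \Lb_{\alpha,\beta}\}$. Since $(AG)(\eta) = -M(\eta)G(\eta)$ with $M(\eta) \geq 0$, the operator $A$ is the negative of a nonnegative multiplication operator; such operators generate analytic (indeed, positive, contraction) $C_0$-semigroups on $L^1$ spaces, given explicitly by $(e^{tA}G)(\eta) = e^{-tM(\eta)}G(\eta)$, and the resolvent $(s - A)^{-1}$ is multiplication by $(s + M(\eta))^{-1}$, bounded for $\mathrm{Re}\,s > 0$ with the sectorial estimate that makes $A$ the generator of an analytic semigroup on a sector.

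The core estimate is the bound on the perturbation $B$ relative to $A$. Here I would compute, for $G \in D_{\alpha,\beta}(\widehat L)$, the weighted $L^1$-norm $\int_{\Gamma_0^2} |(BG)(\eta)| e^{\alpha|\eta^+|}e^{\beta|\eta^-|}\,d\lambda(\eta)$ by passing each of the four terms in $BG$ through the combinatorial identity \eqref{PHD:01} (summing $\xi \subset \eta$ against $\lambda$ factors into a double integral over $\xi$ and $\eta\setminus\xi$), and through Minlos-type rearrangements of the $\mathbb K_0^{-1}$ kernels against the $e^{\pm\alpha},e^{\pm\beta}$ weights. The purpose of the definition \eqref{EQ:02} of $c(\alpha,\beta;\eta)$ is precisely that this computation yields
\[
 \int_{\Gamma_0^2}|(BG)(\eta)|\,k_\mu(\eta)\,d\lambda(\eta) \leq \int_{\Gamma_0^2} c(\alpha,\beta;\eta)\,|G(\eta)|\,k_\mu(\eta)\,d\lambda(\eta),
\]
where the factors $e^{-\alpha}$ and $e^{-\beta}$ in the last two lines of \eqref{EQ:02} account for the shift in particle number caused by the birth terms $G(\xi^\pm \cup x, \cdot)$. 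Combining this with the hypothesis \eqref{PHDIPS:02} gives $\|BG\|_{\alpha,\beta} \leq a\,\|M\cdot G\|_{\alpha,\beta} = a\,\|AG\|_{\alpha,\beta}$ with $a \in (0,2)$.

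With the relative bound $a < 1$ one would be in the classical Hille–Phillips/bounded-perturbation setting, but the assumption $a < 2$ instead requires the sharper result for accretive/dissipative perturbations on $L^1$ — the theorem (often attributed to the work underlying the one-component case in \cite{FKK12}) that if $A$ generates an analytic contraction semigroup, $B$ is $A$-bounded with relative bound strictly below a threshold of $2$, and a suitable range/dissipativity condition holds, then $A + B$ with $D_{\alpha,\beta}(\widehat L)$ generates an analytic contraction semigroup. The verification of dissipativity of $\widehat L = A+B$ on the positive cone, via the sign structure of the death, birth, and $\mathbb K_0^{-1}$ terms, is what upgrades the admissible range from $1$ to $2$; I would check that $\langle \widehat L G, \mathrm{sgn}\,G\rangle \leq 0$ using that the off-diagonal contributions of $B$ are dominated by the diagonal loss $-M(\eta)|G(\eta)|$. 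The identity $\widehat T_{\alpha,\beta}(t)\one^* = \one^*$ follows from $\widehat L\,\one^* = 0$, which one sees directly since $M(\emptyset)=0$ and every term of $B$ carries an explicit factor vanishing on $\one^* = 0^{|\eta|}$. I expect the main obstacle to be establishing the $c$-to-$B$ inequality cleanly — keeping track of which $\mathbb K_0^{-1}$ argument shifts live on $\xi$ versus $\eta\setminus\xi$ and matching the weight exponents — together with invoking the correct $a<2$ perturbation theorem rather than the naive $a<1$ one. For part (b), the invariance of $\Lb_{\alpha,\beta}$ under $\widehat T_{\alpha',\beta'}(t)$ for $\alpha' < \alpha,\ \beta' < \beta$ and the consistency $\widehat T_{\alpha,\beta}(t) = \widehat T_{\alpha',\beta'}(t)|_{\Lb_{\alpha,\beta}}$ I would obtain from the continuous embedding $\Lb_{\alpha,\beta} \hookrightarrow \Lb_{\alpha',\beta'}$ (monotonicity of the weights) combined with a resolvent-consistency argument: both semigroups solve the same resolvent equation, so uniqueness forces the resolvents to agree on the smaller space, and the invariance then transfers to the semigroups via the exponential formula or a Yosida-approximation limit.
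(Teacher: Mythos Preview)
Your overall architecture for part (a) is right, but the key estimate is miscounted, and this drives you to invoke a nonexistent theorem. The function $c(\alpha,\beta;\eta)$ already contains $M(\eta)$: in the first two lines of \eqref{EQ:02}, the $\xi=\emptyset$ contribution is exactly $\sum_{x\in\eta^-}d^-(x,\eta^+,\eta^-\backslash x)+\sum_{x\in\eta^+}d^+(x,\eta^+\backslash x,\eta^-)=M(\eta)$. Consequently, if one introduces the positive majorant $B'$ (replace each $\mathbb K_0^{-1}(\cdots)$ in $B$ by $|\mathbb K_0^{-1}(\cdots)|$), the combinatorial computation via \eqref{PHD:01} yields the \emph{equality}
\[
 \int_{\Gamma_0^2} (B'G)(\eta)\,e^{\alpha|\eta^+|}e^{\beta|\eta^-|}\,d\lambda(\eta)
 = \int_{\Gamma_0^2}\bigl(c(\alpha,\beta;\eta)-M(\eta)\bigr)G(\eta)\,e^{\alpha|\eta^+|}e^{\beta|\eta^-|}\,d\lambda(\eta)
\]
for $G\ge 0$, so the relative $A$-bound of $B'$ is at most $a-1<1$, not $a<2$. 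There is no ``$a<2$ perturbation theorem'' to appeal to; the point is precisely that the hypothesis $c\le aM$ with $a<2$ is equivalent to $c-M\le (a-1)M$ with $a-1<1$, which puts you squarely in the Kato--Voigt/Thieme--Voigt substochastic perturbation regime. The paper then proceeds in two steps you have collapsed into one: first \cite{TV06} gives that $(A+B',D_{\alpha,\beta}(A))$ generates a positive contraction semigroup $U(t)$; second, since $|BG|\le B'|G|$ pointwise, the domination result of Arendt--Rhandi \cite{AR91} transfers this to $(A+B,D_{\alpha,\beta}(A))$, yielding an analytic semigroup $\widehat T(t)$ with $|\widehat T(t)G|\le U(t)|G|$, hence contractive. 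Your proposed direct verification of dissipativity of $A+B$ via $\langle \widehat L G,\mathrm{sgn}\,G\rangle\le 0$ is not straightforward because $B$ is not sign-definite; the detour through $B'$ is what makes the argument work.

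For part (b), resolvent consistency alone does not give invariance of the smaller space $\Lb_{\alpha,\beta}$ under $\widehat T_{\alpha',\beta'}(t)$. The paper instead conjugates by the weight isomorphism $S:\Lb_{\alpha,\beta}\to\Lb_{\alpha',\beta'}$, $(SG)(\eta)=e^{(\alpha-\alpha')|\eta^+|}e^{(\beta-\beta')|\eta^-|}G(\eta)$, checks that $\widehat L_1:=S\widehat L S^{-1}$ on $\Lb_{\alpha',\beta'}$ again has the form $A+B_1$ with the same estimate $c(\alpha,\beta;\cdot)\le a M$ (the $(\alpha',\beta')$-weights cancel against the conjugation), and then invokes \cite[Ch.~4, Thm.~5.5, 5.8]{PAZ83} to conclude invariance; only after that does a resolvent comparison identify the restricted semigroup with $\widehat T_{\alpha,\beta}(t)$.
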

\begin{proof}
 (a) Set $D_{\alpha,\beta}(A) := \{ G \in \Lb_{\alpha,\beta}\ | \ M \cdot G \in \Lb_{\alpha,\beta}\}$.
 Then, since $M \geq 0$, the operator $(A, D_{\alpha,\beta}(A))$ is the generator of an analytic (of angle $\frac{\pi}{2}$), positive $C_0$-semigroup
 $(e^{-tM})_{t \geq 0}$ on $\Lb_{\alpha,\beta}$, see \cite{ENG00}.
 Let $B'$ be defined for any $G \in B_{bs}(\Gamma_0^2)$ by
 \begin{align*}
  (B'G)(\eta^+, \eta^-) :=  &\ \sum _{\xi \subsetneq \eta}G(\xi) \sum _{x \in \xi^-} |\mathbb{K}_0^{-1}d^-(x,\cdot \cup \xi^+, \cdot \cup \xi^- \backslash x)|(\eta \backslash \xi)
  \\ &+ \sum _{\xi \subsetneq \eta}G(\xi) \sum _{x \in \xi^+} |\mathbb{K}_0^{-1}d^+(x,\cdot \cup \xi^+ \backslash x, \cdot \cup \xi^-)|(\eta \backslash \xi)
  \\ &+ \sum _{\xi \subset \eta} \int _{\R^d}G(\xi^+, \xi^- \cup x) |\mathbb{K}_0^{-1}b^-(x,\cdot \cup
  \xi^+, \cdot \cup \xi^-)|(\eta \backslash \xi)\,dx
  \\ &+ \sum _{\xi \subset \eta} \int _{\R^d}G(\xi^+ \cup x, \xi^-) |\mathbb{K}_0^{-1}b^+(x,\cdot
  \cup \xi^+,\cdot \cup \xi^-)|(\eta \backslash \xi)\,dx.
 \end{align*}
 Fix $r \in (0,1)$, cf. \eqref{PHDIPS:02}, such that $a(\alpha,\beta) < 1 + r < 2$.
 For each $0 \leq G \in D_{\alpha,\beta}(A)$, see \eqref{PHD:01}, we obtain by \eqref{PHDIPS:02}
 \begin{align*}
  \int _{\Gamma_0^2}B'G(\eta)e^{\alpha|\eta^+|}e^{\beta|\eta^-|}\,d\lambda(\eta)
  &= \int _{\Gamma_0^2}(c(\alpha,\beta; \eta) - M(\eta))G(\eta)e^{\alpha|\eta^+|}e^{\beta|\eta^-|}\,d\lambda(\eta)
  \\ &\leq r \int _{\Gamma_0^2}M(\eta)G(\eta)e^{\alpha|\eta^+|}e^{\beta|\eta^-|}\,d\lambda(\eta)
 \end{align*}
 and hence $\int _{\Gamma_0^2}\left(A + \frac{1}{r}B'\right)G(\eta)e^{\alpha|\eta^+|}e^{\beta|\eta^-|}
 \,d\lambda(\eta) \leq 0$ holds.
 Therefore by \cite[Theorem~2.2]{TV06} the operator $(A + B', D_{\alpha,\beta}(A))$ is the generator of a strongly continuous semigroup $(U(s))_{s \geq 0}$
 of contractions which preserves positivity. By \cite[Theorem 1.1, Theorem 1.2]{AR91} the operator $(A + B, D_{\alpha,\beta}(A))$ is the generator
 of an analytic semigroup $(\widehat{T}(s))_{s \geq 0}$ such that $|\widehat{T}(s)G| \leq U(s)|G|$ holds for all $G \in \Lb_{\alpha,\beta}$.
 Since $U(s)$ is a contraction operator, so is $\widehat{T}(s)$. It remains to show that the closure of $(\widehat{L}, B_{bs}(\Gamma_0^2))$
 is given by $(A + B, D_{\alpha,\beta}(A))$. To this end it suffices to show that $B_{bs}(\Gamma_0^2)$ is a core for $A+B$.

 Let $G \in D_{\alpha,\beta}(A)$, $A_n \subset \Gamma_0^2$ an increasing sequence of bounded sets with $\underset{n \geq 1}{\bigcup}\ A_n = \Gamma_0^2$ and let
 $G_n(\eta) := \one_{A_n}(\eta)\one_{|G|\leq n}(\eta)G(\eta)$. Then $|G_n| \leq |G|$ and $G_n \longrightarrow G$ almost everywhere. Hence, $M\cdot G_n \longrightarrow M\cdot G$
 and by dominated convergence also $BG_n \longrightarrow BG$, as $n \to \infty$ a.e. As a consequence  $\widehat{L}G_n \longrightarrow \widehat{L}G$,
 as $n \to \infty$ almost everywhere. By
 \[
  |\widehat{L}G_n| \leq M|G_n| + B'|G_n| \leq (M+B')|G| \in \Lb_{\alpha,\beta}
 \]
 and dominated convergence we obtain $\widehat{L}G_n \longrightarrow \widehat{L}G$ in $\Lb_{\alpha,\beta}$. Therefore $B_{bs}(\Gamma_0^2) \subset D_{\alpha,\beta}(\widehat{L})$
 is dense in the graph norm, i.e. the closure of $(\widehat{L}, B_{bs}(\Gamma_0^2))$ is given by $(A+B, D_{\alpha,\beta}(A))$. \newline

 (b) Let $\alpha' < \alpha$ and $\beta' < \beta$ such that
\eqref{PHDIPS:02} also holds for $(\alpha',\beta')$. Denote by
$(\widehat{T}_{\alpha',\beta'}(s))_{s \geq 0}$
 the corresponding semigroup on $\Lb_{\alpha',\beta'}$ with generator $(\widehat{L}, D_{\alpha',\beta'}(\widehat{L}))$ on the domain
 \[
  D_{\alpha',\beta'}(\widehat{L}) := \{ G \in \Lb_{\alpha',\beta'} \ | \ M \cdot G \in \Lb_{\alpha',\beta'}\}.
 \]
 We have to show that $\Lb_{\alpha,\beta}$ is invariant for $\widehat{T}_{\alpha',\beta'}(s)$ and
 \begin{align}\label{GMCS:01}
  \widehat{T}(s)G = \widehat{T}_{\alpha', \beta'}(s)G, \quad G \in \Lb_{\alpha,\beta}, \quad s \geq 0.
 \end{align}
 To this end we define a linear isomorphism
 \[
  S: \Lb_{\alpha,\beta} \longrightarrow \Lb_{\alpha',\beta'}, \quad (SG)(\eta) = e^{(\alpha-\alpha')|\eta^+|}e^{(\beta-\beta')|\eta^-|}G(\eta)
 \]
 with inverse $S^{-1}$ given by $(S^{-1}G)(\eta) = e^{-(\alpha - \alpha')|\eta^+|}e^{-(\beta - \beta')|\eta|} G(\eta)$.
 Define on $\Lb_{\alpha',\beta'}$ a new operator by $\widehat{L}_1 := S \widehat{L} S^{-1}$ equipped with the domain
 \[
  D_{\alpha',\beta'}(\widehat{L}_1) = \{ G \in \Lb_{\alpha',\beta'}\ | \ S^{-1}G \in D_{\alpha,\beta}(\widehat{L}) \} = \{ G \in \Lb_{\alpha',\beta'} \ | \ M S^{-1}G \in \Lb_{\alpha,\beta}\}.
 \]
 Since $\Vert M S^{-1}G \Vert_{\Lb_{\alpha,\beta}} = \Vert MG \Vert_{\Lb_{\alpha',\beta'}}$ we obtain
 $D_{\alpha',\beta'}(\widehat{L}_1) = D_{\alpha',\beta'}(\widehat{L})$.
 Let us show that $(\widehat{L}_1, D_{\alpha',\beta'}(\widehat{L}_1))$ is the generator of a $C_0$-semigroup on $\Lb_{\alpha',\beta'}$.
 The definition of $S$ and $S^{-1}$ implies $\widehat{L}_1 = A + B_1$ where $A$ is the same as for $\widehat{L}$ and $B_1$ is given by
 \begin{align*}
 &\ (B_1G)(\eta) =
 \\ &- \sum _{\xi \subsetneq \eta}G(\xi)e^{(\alpha - \alpha')|\eta^+ \backslash \xi^+|}e^{(\beta - \beta')|\eta^- \backslash \xi^-|} \sum _{x \in \xi^-}  (\mathbb{K}_0^{-1}d^-(x,\cdot \cup \xi^+, \cdot \cup \xi^- \backslash x))(\eta \backslash \xi)
 \\ &- \sum _{\xi \subsetneq \eta}G(\xi)e^{(\alpha - \alpha')|\eta^+ \backslash \xi^+|}e^{(\beta - \beta')|\eta^- \backslash \xi^-|} \sum _{x \in \xi^+} (\mathbb{K}_0^{-1}d^+(x,\cdot \cup \xi^+ \backslash x, \cdot \cup \xi^-))(\eta \backslash \xi)
 \\ &+ e^{-(\beta - \beta')}\sum _{\xi \subset \eta}
 \int _{\R^d}G(\xi^+, \xi^- \cup x)e^{(\alpha - \alpha')|\eta^+ \backslash \xi^+|}e^{(\beta - \beta')
 |\eta^- \backslash \xi^-|} (\mathbb{K}_0^{-1}b^-(x,\cdot \cup \xi))(\eta \backslash \xi)\,dx
 \\ &+ e^{-(\alpha - \alpha')}\sum _{\xi \subset \eta} \int _{\R^d}G(\xi^+ \cup x,
 \xi^-)e^{(\alpha - \alpha')|\eta^+ \backslash \xi^+|}e^{(\beta - \beta')|\eta^- \backslash \xi^-|}
 (\mathbb{K}_0^{-1}b^+(x,\cdot \cup \xi))(\eta \backslash \xi)\,dx.
 \end{align*}
 Define analogously to $B'$ the positive operator $B_1'$ such that $|B_1G| \leq B_1'|G|$, then for any non-negative function $G \in D_{\alpha',\beta'}(\widehat{L}_1)$
 we obtain
 \[
  \int _{\Gamma_0^2}B_1'G(\eta) e^{\alpha'|\eta^+|}e^{\beta'|\eta^-|}\,d\lambda(\eta) =
  \int _{\Gamma_0^2}(c(\alpha,\beta;\eta) - M(\eta))G(\eta)e^{\alpha'|\eta^+|}e^{\beta'|\eta^-|}\,d\lambda(\eta).
 \]
 The same arguments as for the construction of $\widehat{T}_{\alpha,\beta}(t)$ show that $(A + B_1', D_{\alpha',\beta'}(\widehat{L}_1))$ is the generator of a sub-stochastic semigroup
 and hence $(\widehat{L}_1, D_{\alpha',\beta'}(\widehat{L}_1))$ is the generator of a $C_0$-semigroup. Now \cite[Chapter 4, Theorem 5.5, Theorem 5.8]{PAZ83} implies that
 $\Lb_{\alpha,\beta}$ is invariant for $\widehat{T}_{\alpha',\beta'}(t)$ and the restriction to $\Lb_{\alpha,\beta}$ is a $C_0$-semigroup given by
 $\widetilde{T}_{\alpha,\beta}(t) := \widehat{T}_{\alpha',\beta'}(t)|_{\Lb_{\alpha,\beta}}$.
 The generator of $\widetilde{T}_{\alpha,\beta}(t)$ is given by the part of $(\widehat{L}, D_{\alpha',\beta'}(\widehat{L}))$ in $\Lb_{\alpha,\beta}$,
 that is by
 \begin{align*}
  D_{\alpha',\beta'}(\widehat{L})|_{\Lb_{\alpha,\beta}}:= & \{ G \in D_{\alpha',\beta'}(\widehat{L})
  \cap \Lb_{\alpha,\beta} \ | \ \widehat{L}G \in \Lb_{\alpha,\beta}\}
  \\ = &  \{ G \in \Lb_{\alpha,\beta}\ | \ M\cdot G \in \Lb_{\alpha',\beta'}, \ \widehat{L}G \in \Lb_{\alpha,\beta}\}.
 \end{align*}
 Condition \eqref{PHDIPS:02} therefore implies $D_{\alpha,\beta}(\widehat{L}) \subset  D_{\alpha',\beta'}(\widehat{L})|_{\Lb_{\alpha,\beta}}$ and hence
 $(\widehat{L},  D_{\alpha',\beta'}(\widehat{L})|_{\Lb_{\alpha,\beta}})$
 is an extension of $(\widehat{L}, D_{\alpha,\beta}(\widehat{L}))$. Denote by $R(\lambda;\widehat{L})$ the resolvent for $(\widehat{L}, D_{\alpha,\beta}(\widehat{L}))$
 and by $\widetilde{R}(\lambda;\widehat{L})$ the resolvent for $(\widehat{L},  D_{\alpha',\beta'}(\widehat{L})|_{\Lb_{\alpha,\beta}})$. For sufficiently large $\lambda > 0$
 it follows that $R(\lambda,\widehat{L})G \in D_{\alpha,\beta}(\widehat{L}) \subset  D_{\alpha',\beta'}(\widehat{L})|_{\Lb_{\alpha,\beta}}$ for any $G \in \Lb_{\alpha,\beta}$ and thus
 \[
  \widetilde{R}(\lambda;\widehat{L})G - R(\lambda;\widehat{L})G = \widetilde{R}(\lambda;\widehat{L})((\lambda - \widehat{L}) - (\lambda - \widehat{L}))R(\lambda;\widehat{L})G = 0,
 \]
 where we have used that for elements in $D_{\alpha,\beta}(\widehat{L})$ the action of the generators is given by the formulas for $\widehat{L} = A + B$ and hence coincide.
\end{proof}
For one-component models, i.e. $b^- = 0 = d^-$, a similar construction was already done in \cite{FKK12}.
The main assumption was that each term in $c(\alpha,\beta;\eta)$ is bounded by $\frac{3}{2}M(\eta)$ and
it was not clear whether $\widehat{T}_{\alpha,\beta}(t)$ is a contraction operator for $t \geq 0$.
The next example shows that the constant $2$ in \eqref{PHDIPS:02} is optimal.
\begin{Example}
 Take $d^- = 1$, $b^- = z > 0$ constant and $b^+ = d^+ = 0$, then condition \eqref{PHDIPS:02} can be restated to $z < e^{\beta}$ and $\alpha \in \R$ is arbitrary.
The evolution equation \eqref{GMCS:48} is in this case exactly solvable and hence has for every initial condition $G_0$ the solution $(G_t)_{t \geq 0}$ given by
\[
 G_t(\eta) = e^{-t|\eta^-|} \int _{\Gamma_0}G(\eta^+, \eta^- \cup \xi^-) e_{\lambda}\left( z (1 - e^{-t});
 \xi^-\right) \,d\lambda(\xi^-), \quad \eta \in \Gamma_0^2,
\]
see \cite{F11} for the one-component case. If condition \eqref{PHDIPS:02} is satisfied, then $G_t \in \Lb_{\alpha,\beta}$.
Suppose that $a(\alpha,\beta) > 2$, i.e. $z > e^{\beta}$ and let $t_0 > 0$ such that $(1-e^{-t})z > e^{\beta}$ for all $t \geq t_0$ and hence
\[
 z(1 - e^{-t})(e^{-\beta} + e^{-t}) \geq z (1-e^{-t}) e^{-\beta} > 1.
\]
Take $0 \leq G \in \Lb_{\alpha,\beta}$ such that $G \not \in \Lb_{\alpha, \beta'}$ for any $\beta' > \beta$. The unique solution $G_t$ is then
positive and satisfies
\begin{align*}
 \Vert G_t \Vert_{\Lb_{\alpha,\beta}} &= \int _{\Gamma_0^2}\int _{\Gamma_0}e^{-t|\eta^-|} G(\eta^+,
 \eta^- \cup \xi^-) e_{\lambda}(z (1-e^{-t});\xi^-)e^{\alpha|\eta^+|}e^{\beta|\eta^-|}\,d\lambda(\xi^-)d\lambda(\eta)
 \\ &= \int _{\Gamma_0^2} G(\eta^+, \xi^-) \left( (e^{-\beta} + e^{-t})z(1 - e^{-t}) \right)^{|\xi^-|}
 e^{\alpha|\eta^+|} e^{\beta|\xi^-|}\,d\lambda(\eta^+, \xi^-) = \infty.
\end{align*}
\end{Example}
Below we show that $\widehat{T}_{\alpha,\beta}(t)$ depends continuously (in a certain sense) on the birth-and-death rates.
Let $d_n^{\pm}, d^{\pm}$ and $b_n^{\pm},b^{\pm}$ be such that condition (A) is satisfied. Let
\begin{align*}
  c_n(\alpha,\beta;\eta) &= \sum _{x \in \eta^-}\int _{\Gamma_0^2}e^{\alpha|\xi^+|}e^{\beta|\xi^-|}| \mathbb{K}_0^{-1}(d^- - d_n^-)(x,\cdot \cup \eta^+, \cdot \cup \eta^- \backslash x)|(\xi)\,d \lambda(\xi)
 \\ &\ \ + \sum _{x \in \eta^+}\int _{\Gamma_0^2}e^{\alpha|\xi^+|}e^{\beta|\xi^-|}| \mathbb{K}_0^{-1}(d^+ - d_n^+)(x,\cdot \cup \eta^+\backslash x, \cdot \cup \eta^-)|(\xi)\,d \lambda(\xi)
 \\ &\ \ + e^{-\beta}\sum _{x \in \eta^-}\int _{\Gamma_0^2}e^{\alpha|\xi^+|}e^{\beta|\xi^-|}| \mathbb{K}_0^{-1}(b^- - b_n^-)(x,\cdot \cup \eta^+, \cdot \cup \eta^- \backslash x)|(\xi)\,d \lambda(\xi)
 \\ &\ \ + e^{-\alpha}\sum _{x \in \eta^+}\int _{\Gamma_0^2}e^{\alpha|\xi^+|}e^{\beta|\xi^-|}| \mathbb{K}_0^{-1}(b^+ - b_n^+)(x,\cdot \cup \eta^+\backslash x, \cdot \cup \eta^-)|(\xi)\,d \lambda(\xi)
\end{align*}
and $M_n(\eta) = \sum _{x \in \eta^-}d_n^-(x,\eta^+, \eta^- \backslash x) + \sum _{x \in \eta^+}d_n^+(x,\eta^+ \backslash x, \eta^-)$.
\begin{Theorem}
 Suppose that the following conditions are satisfied.
 \begin{enumerate}
  \item There exist $\alpha,\beta \in \R$ and a constant $a = a(\alpha,\beta) \in (0,2)$ (independent of $n$) such that
  \[
   c_n(\alpha,\beta;\eta) \leq a(\alpha,\beta)M_n(\eta), \quad \eta \in \Gamma_0^2, \quad n \geq 1
  \]
  holds.
  \item There exist constants $C > 0$, $N \in \N$ and $\tau \geq 0$ such that
  \[
   d^-_n(x,\eta) + d^+_n(x,\eta) \leq C(1+|\eta|)^Ne^{\tau |\eta|}, \quad \eta \in \Gamma_0^2, \quad  x \in \R^d
  \]
  holds.
  \item $c_n(\alpha,\beta;\eta) \longrightarrow 0$, $n \to \infty$ holds for all $\eta \in \Gamma_0^2$.
 \end{enumerate}
 Then \eqref{PHDIPS:02} is satisfied. Let $\widehat{T}_{\alpha,\beta}(t)$ and $\widehat{T}_{\alpha,\beta}^n(t)$ be the associated semigroups on $\Lb_{\alpha,\beta}$.
 Then $\widehat{T}_{\alpha,\beta}^n(t)G \longrightarrow \widehat{T}_{\alpha,\beta}(t)G$, as $n \to \infty$ in $\Lb_{\alpha,\beta}$ uniformly on compacts for $t \geq 0$
 and all $G \in \Lb_{\alpha,\beta}$.
\end{Theorem}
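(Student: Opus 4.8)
The plan is to derive the convergence from the Trotter--Kato approximation theorem (see, e.g., \cite{ENG00, PAZ83}), with Theorem~\ref{PHDIPSTH:01} producing each of the semigroups together with their contractivity. Write $c^{(n)}(\alpha,\beta;\eta)$ for the expression \eqref{EQ:02} built from the rates $d_n^{\pm},b_n^{\pm}$, so that $c_n$ is the same expression formed from the differences $d^{\pm}-d_n^{\pm}$, $b^{\pm}-b_n^{\pm}$. To see that \eqref{PHDIPS:02} holds, observe that linearity of $\mathbb{K}_0^{-1}$ and the triangle inequality give, for every $\eta\in\Gamma_0^2$, both $c(\alpha,\beta;\eta)\le c_n(\alpha,\beta;\eta)+c^{(n)}(\alpha,\beta;\eta)$ and $|M(\eta)-M_n(\eta)|\le c_n(\alpha,\beta;\eta)$, the latter because the death part of $c_n$ evaluated at $\xi=\emptyset$ already dominates $|M-M_n|$. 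By~(3) we have $c_n(\alpha,\beta;\eta)\to0$, whence $c^{(n)}(\alpha,\beta;\eta)\to c(\alpha,\beta;\eta)$ and $M_n(\eta)\to M(\eta)$ pointwise. Condition~(1) makes Theorem~\ref{PHDIPSTH:01} applicable to each approximating system, so every $\widehat{T}^n_{\alpha,\beta}$ is a contraction on $\Lb_{\alpha,\beta}$; letting $n\to\infty$ in the relative bound $c^{(n)}\le a\,M_n$ (the form of \eqref{PHDIPS:02} for the $n$-th system, with the uniform constant $a\in(0,2)$) then yields $c(\alpha,\beta;\eta)\le a\,M(\eta)$, which is \eqref{PHDIPS:02}. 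In particular $\widehat{T}_{\alpha,\beta}(t)$ exists and is a contraction as well.

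All these semigroups are contractions, hence equibounded, and $B_{bs}(\Gamma_0^2)$ is a common core: it lies in $D_{\alpha,\beta}(\widehat{L}^n)=\{G\mid M_n\cdot G\in\Lb_{\alpha,\beta}\}$ for every $n$, since $M_n\cdot G\in\Lb_{\alpha,\beta}$ whenever $G$ has bounded support ($M_n$ being bounded on bounded sets by~(2)), and it is a core for $\widehat{L}$ by Theorem~\ref{PHDIPSTH:01}(a). By the Trotter--Kato theorem it therefore suffices to prove that $\widehat{L}^n G\to\widehat{L}G$ in $\Lb_{\alpha,\beta}$ for each $G\in B_{bs}(\Gamma_0^2)$; the convergence $\widehat{T}^n_{\alpha,\beta}(t)G\to\widehat{T}_{\alpha,\beta}(t)G$, uniform on compact $t$-intervals and for all $G\in\Lb_{\alpha,\beta}$, then follows automatically.

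Proving this norm convergence on the core is where the real work lies, and I expect it to be the main obstacle. Splitting $\widehat{L}^n-\widehat{L}=(A_n-A)+(B_n-B)$ with $(A_n-A)G=-(M_n-M)G$, the diagonal part contributes $\int_{\Gamma_0^2}|M_n-M|\,|G|\,e^{\alpha|\eta^+|}e^{\beta|\eta^-|}\,d\lambda$, which is at most $\int_{\Gamma_0^2}c_n\,|G|\,e^{\alpha|\eta^+|}e^{\beta|\eta^-|}\,d\lambda$ by the estimate $|M-M_n|\le c_n$ from the first step. For the off-diagonal part I would introduce the positive majorant operator associated with the absolute differences $|d^{\pm}-d_n^{\pm}|$, $|b^{\pm}-b_n^{\pm}|$, exactly as $B'$ was associated with $B$ in the proof of Theorem~\ref{PHDIPSTH:01}; the computation carried out there identifies its $\Lb_{\alpha,\beta}$-action with an integral against $c_n-|M-M_n|$, so that the two parts combine to $\Vert(\widehat{L}^n-\widehat{L})G\Vert_{\Lb_{\alpha,\beta}}\le\int_{\Gamma_0^2}c_n(\alpha,\beta;\eta)\,|G(\eta)|\,e^{\alpha|\eta^+|}e^{\beta|\eta^-|}\,d\lambda(\eta)$. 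The integrand tends to $0$ pointwise by~(3), and on the bounded support of $G$ the relative bound $c_n\le a\,M_n$ from~(1), together with the uniform estimate $M_n(\eta)\le C\,|\eta|\,(1+|\eta|)^{N}e^{\tau|\eta|}$ extracted from~(2), furnishes a single $n$-independent, $\lambda$-integrable majorant. Dominated convergence then gives $\Vert(\widehat{L}^n-\widehat{L})G\Vert_{\Lb_{\alpha,\beta}}\to0$, completing the Trotter--Kato hypotheses. The two genuinely delicate points are the identification of the off-diagonal norm difference with the $c_n$-integral via the majorant computation of Theorem~\ref{PHDIPSTH:01}, and the verification that~(1) and~(2) jointly supply one integrable dominating function valid simultaneously for all $n$ on the support of $G$; everything else is a direct application of the approximation theorem.
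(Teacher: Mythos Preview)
Your proposal is correct and follows essentially the same route as the paper's proof: first pass to the limit in the relative bound to obtain \eqref{PHDIPS:02}, then apply Trotter--Kato on the common core $B_{bs}(\Gamma_0^2)$ via the estimate $\Vert(\widehat{L}_n-\widehat{L})G\Vert_{\Lb_{\alpha,\beta}}\le\int_{\Gamma_0^2}c_n(\alpha,\beta;\eta)\,|G(\eta)|\,e^{\alpha|\eta^+|}e^{\beta|\eta^-|}\,d\lambda(\eta)$ and dominated convergence. One small slip to fix: in the final step you invoke ``$c_n\le a\,M_n$ from (1)'', but in your own notation condition~(1) is $c^{(n)}\le a\,M_n$; the paper instead uses the triangle inequality $c_n\le c+c^{(n)}\le a(M+M_n)$ together with $M,M_n\le C|\eta|(1+|\eta|)^N e^{\tau|\eta|}$ from~(2) to produce the $n$-independent majorant $2aC|\eta|^{N+1}e^{\tau|\eta|}$.
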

\begin{proof}
 Denote by $c(L,\alpha,\beta;\eta)$ the function defined in \eqref{EQ:02}. Let $c(L_n,\alpha,\beta;\eta)$ be given as in \eqref{EQ:02} with $d^{\pm},b^{\pm}$
 replaced by $d_n^{\pm},b_n^{\pm}$. It is not difficult to see that
 \[
  |c(L_n,\alpha,\beta;\eta) - c(L,\alpha,\beta;\eta)| \leq c_n(\alpha,\beta;\eta)
 \]
 and $|M_n(\eta) - M(\eta)| \leq c_n(\alpha,\beta;\eta)$ holds. Thus, for any $\eta \in \Gamma_0^2$
 \[
  c(L,\alpha,\beta;\eta) = \lim _{n \to \infty}\ c(L_n,\alpha,\beta;\eta) \leq a(\alpha,\beta) \lim _{n\to \infty}\ M_n(\eta) = a(\alpha,\beta)M(\eta)
 \]
 and hence \eqref{PHDIPS:02} holds. Let $\widehat{T}_{\alpha,\beta}(t)$ and $\widehat{T}_{\alpha,\beta}^n(t)$ be the associated semigroups on $\Lb_{\alpha,\beta}$.
 By Trotter-Kato approximation and since $B_{bs}(\Gamma_0^2)$ is a core for the generators of both semigroups,
 it suffices to show $\widehat{L}_nG \longrightarrow \widehat{L}G$ for any $G \in B_{bs}(\Gamma_0^2)$.
 To this end one can show that
 \[
  \Vert \widehat{L}_nG - \widehat{L}G \Vert_{\Lb_{\alpha,\beta}} \leq \int _{\Gamma_0^2}c_n(\alpha,\beta;\eta)|G(\eta)|e^{\alpha|\eta^+|}e^{\beta|\eta^-|}\,d \lambda(\eta).
 \]
 Note that the integrand tends for a.a. $\eta$ to zero. Since $c_n(\alpha,\beta;\eta) \leq a(\alpha,\beta)(M(\eta) + M_n(\eta))$ and by
 $M(\eta) = \lim _{n \to \infty}\ M_n(\eta) \leq C|\eta|^{N+1}e^{\tau|\eta|}$ it follows that
 $c_n(\alpha,\beta;\eta) \leq 2 a(\alpha,\beta)C |\eta|^{N+1}e^{\tau |\eta|}$. Dominated convergence implies $\widehat{L}_nG \longrightarrow \widehat{L}G$, as $n\to \infty$
 for all $G \in B_{bs}(\Gamma_0^2)$.
\end{proof}

\subsection{Correlation functions}
Suppose that condition (A) and \eqref{PHDIPS:02} are fulfilled.
Denote by $\widehat{T}_{\alpha,\beta}(t)^*$ the adjoint semigroup on $\K_{\alpha,\beta}$ and by $(\widehat{L}^*, D_{\alpha,\beta}(\widehat{L}^*))$
the adjoint operator to $(\widehat{L}, D_{\alpha,\beta}(\widehat{L}))$, i.e. $\langle \widehat{L}G, k\rangle = \langle G, \widehat{L}^*k\rangle$
for $G \in D_{\alpha,\beta}(\widehat{L})$ and $k \in D_{\alpha,\beta}(\widehat{L}^*)$.
\begin{Remark}\label{GMCSRK:04}
 Let $\alpha' < \alpha$ and $\beta' < \beta$ be such that condition \eqref{PHDIPS:02} holds for $\alpha',\beta'$ and $\alpha,\beta$.
 Let $(\widehat{T}_{\alpha',\beta'}(s))_{s \geq 0}$ be the analytic semigroup constructed in Theorem \ref{PHDIPSTH:01}.
 Then by \eqref{GMCS:01} for any $G \in \Lb_{\alpha,\beta} \subset \Lb_{\alpha',\beta'}$ and $k \in \K_{\alpha',\beta'} \subset \K_{\alpha,\beta}$ we obtain
 \[
  \langle G, \widehat{T}_{\alpha',\beta'}(t)^*k \rangle = \langle \widehat{T}_{\alpha',\beta'}(t)G, k\rangle = \langle \widehat{T}_{\alpha,\beta}(t)G, k\rangle = \langle G, \widehat{T}_{\alpha,\beta}(t)^*k\rangle
 \]
 and hence $\widehat{T}_{\alpha,\beta}(t)^*k = \widehat{T}_{\alpha',\beta'}(t)^*k$ holds.
\end{Remark}
We consider the linear operator $L^{\Delta}$
\begin{align*}
 (L^{\Delta}k)(\eta) = &- \sum _{x \in \eta^-}\int _{\Gamma_0^2}k(\eta \cup \xi)
 (\mathbb{K}_0^{-1}d^-(x,\cdot \cup \eta^+, \cdot \cup \eta^- \backslash x))(\xi)\,d\lambda(\xi)
  \\ &- \sum _{x \in \eta^+}\int _{\Gamma_0^2}k(\eta \cup \xi)(\mathbb{K}_0^{-1}d^+(x,\cdot \cup
  \eta^+ \backslash x, \cdot \cup \eta^-))(\xi)\,d\lambda(\xi)
  \\ &+ \sum _{x \in \eta^-}\int _{\Gamma_0^2}k(\eta^+ \cup \xi^+, \eta^- \cup \xi^- \backslash x)
  (\mathbb{K}_0^{-1}b^-(x,\cdot \cup \eta^+, \cdot \cup \eta^-\backslash x))(\xi)\,d\lambda(\xi)
  \\ &+ \sum _{x \in \eta^+}\int _{\Gamma_0^2}k(\eta^+ \cup \xi^+ \backslash x, \eta^- \cup \xi^-)
  (\mathbb{K}_0^{-1}b^+(x,\cdot \cup \eta^+ \backslash x, \cdot \cup \eta^-))(\xi)\,d\lambda(\xi)
\end{align*}
on the maximal domain $D_{\alpha,\beta}(L^{\Delta}) = \{ k \in \K_{\alpha,\beta}\ | \ L^{\Delta}k \in \K_{\alpha,\beta} \}$.
Introduce the following additional conditions.
\begin{enumerate}
 \item[(B)] There exist constants $C > 0$, $\tau \geq 0$ and $N \in \N$ such that for all $x \in \R^d$ and $\eta \in \Gamma_0^2$
 \begin{align}\label{PHDIPS:25}
  d^+(x,\eta) + d^-(x,\eta) + b^+(x,\eta) + b^-(x,\eta) \leq C(1+|\eta|)^Ne^{\tau|\eta|}
 \end{align}
 holds.
 \item[(C)] Let $\alpha,\beta$ and $a(\alpha,\beta)$ be as in \eqref{PHDIPS:02}.
 There exist constants $\alpha',\beta' \in \R$ with $\alpha' + \tau < \alpha$, $\beta' + \tau < \beta$ and another constant $a(\alpha',\beta') > 0$ such that
 \[
  c(\alpha',\beta';\eta) \leq a(\alpha',\beta')M(\eta), \quad \eta \in \Gamma_0^2
 \]
 holds.
\end{enumerate}
\begin{Lemma}\label{GMCSLEMMA:00}
 Suppose that \eqref{PHDIPS:02} and conditions (A)--(C) are satisfied. Then
 \[
  \widehat{L}^* = L^{\Delta} \in L(\K_{\alpha',\beta'}, \K_{\alpha,\beta}).
 \]
 In particular $\K_{\alpha',\beta'} \subset D_{\alpha,\beta}(L^{\Delta})$ holds.
\end{Lemma}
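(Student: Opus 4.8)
The plan is to separate the statement into the operator identity $\widehat{L}^* = L^{\Delta}$ and the boundedness $L^{\Delta}\in L(\K_{\alpha',\beta'},\K_{\alpha,\beta})$, the latter yielding at once $\K_{\alpha',\beta'}\subset D_{\alpha,\beta}(L^{\Delta})$. For the first part I would verify the formal adjoint relation $\langle\widehat{L}G,k\rangle = \langle G, L^{\Delta}k\rangle$ for every $G\in B_{bs}(\Gamma_0^2)$ and $k\in\K_{\alpha,\beta}$, using the duality $\langle G,k\rangle = \int_{\Gamma_0^2}G(\eta)k(\eta)\,d\lambda(\eta)$. Since $B_{bs}(\Gamma_0^2)$ is a core for $(\widehat{L},D_{\alpha,\beta}(\widehat{L}))$ by Theorem \ref{PHDIPSTH:01}, and since the same estimate used below, applied with $(\alpha,\beta)$ in place of $(\alpha',\beta')$, shows via \eqref{PHDIPS:02} that $L^{\Delta}k$ is an a.e.\ finite function for each $k\in\K_{\alpha,\beta}$, this relation forces the adjoint to act as $L^{\Delta}$ and identifies the maximal domains: $k\in D_{\alpha,\beta}(\widehat{L}^*)$ if and only if $L^{\Delta}k\in\K_{\alpha,\beta}$, with $\widehat{L}^*k = L^{\Delta}k$.

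To establish the adjoint relation I would treat $\widehat{L} = A + B$ term by term. For the two death contributions the key point is that $(\mathbb{K}_0^{-1}F)(\emptyset) = F(\emptyset)$, so that extending the proper sums $\sum_{\xi\subsetneq\eta}$ in $B$ to $\sum_{\xi\subset\eta}$ adds precisely the diagonal term $-M(\eta)G(\eta) = (AG)(\eta)$; thus $A$ together with the first two terms of $B$ collapse into $-\sum_{\xi\subset\eta}G(\xi)[\,\cdots\,](\eta\backslash\xi)$. Pairing with $k$ and applying the combinatorial identity \eqref{PHD:01} turns $\eta\backslash\xi$ into a free integration variable and $k(\eta)$ into $k(\xi\cup\,\cdot\,)$, reproducing the first two terms of $L^{\Delta}$. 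For the two birth terms I would instead invoke the Mecke-type identity for the Lebesgue--Poisson measure, $\int_{\Gamma_0}\int_{\R^d}H(x,\zeta\cup x)\,dx\,d\lambda(\zeta) = \int_{\Gamma_0}\sum_{x\in\zeta}H(x,\zeta)\,d\lambda(\zeta)$, applied to the component in which the new point is created; this converts the added-point structure $G(\xi^+,\xi^-\cup x)$ with $\int_{\R^d}dx$ into the removed-point structure $k(\eta^+\cup\,\cdot\,,\,\eta^-\cup\,\cdot\,\backslash x)$ with $\sum_{x\in\eta^-}$ (and symmetrically for the $+$ component), yielding the last two terms of $L^{\Delta}$. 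Each rearrangement requires Fubini, whose hypotheses I would check through the absolutely convergent integrals encoded in $c(\alpha,\beta;\eta)$; I expect this birth-term bookkeeping --- tracking which component gains or loses a point and the corresponding shift of the exponential weight --- to be the main technical obstacle.

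For the boundedness the central estimate is the pointwise bound
\[
 |(L^{\Delta}k)(\eta)| \le \Vert k\Vert_{\K_{\alpha',\beta'}}\, e^{\alpha'|\eta^+|}e^{\beta'|\eta^-|}\, c(\alpha',\beta';\eta), \qquad \eta\in\Gamma_0^2,
\]
which I would obtain by bounding each factor $k(\cdots)$ occurring in the four terms of $L^{\Delta}$ by $\Vert k\Vert_{\K_{\alpha',\beta'}}$ times the corresponding exponential weight and recognising the remaining integrals as exactly the four summands of $c(\alpha',\beta';\eta)$ in \eqref{EQ:02}; here the prefactors $e^{-\beta'}$ and $e^{-\alpha'}$ in the birth summands appear precisely because a point is deleted from the $\eta^-$- respectively $\eta^+$-component inside the argument of $k$. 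Dividing by $e^{\alpha|\eta^+|}e^{\beta|\eta^-|}$ and using condition (C) in the form $c(\alpha',\beta';\eta)\le a(\alpha',\beta')M(\eta)$ together with condition (B), \eqref{PHDIPS:25}, which yields $M(\eta)\le C(1+|\eta|)^{N+1}e^{\tau|\eta|}$, gives
\[
 \Vert L^{\Delta}k\Vert_{\K_{\alpha,\beta}} \le a(\alpha',\beta')\,C\,\Vert k\Vert_{\K_{\alpha',\beta'}}\,\sup_{\eta\in\Gamma_0^2}(1+|\eta|)^{N+1}e^{(\alpha'+\tau-\alpha)|\eta^+|}e^{(\beta'+\tau-\beta)|\eta^-|}.
\]
The gap conditions $\alpha'+\tau<\alpha$ and $\beta'+\tau<\beta$ from (C) make both exponents strictly negative, so the polynomial factor $(1+|\eta|)^{N+1}$ is absorbed and the supremum is finite. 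This proves $L^{\Delta}\in L(\K_{\alpha',\beta'},\K_{\alpha,\beta})$ and hence $\K_{\alpha',\beta'}\subset D_{\alpha,\beta}(L^{\Delta})$, completing the argument.
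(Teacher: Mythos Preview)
Your proposal is correct and follows essentially the same approach as the paper: the boundedness is obtained via the pointwise estimate $|L^{\Delta}k(\eta)|\le \Vert k\Vert_{\K_{\alpha',\beta'}}e^{\alpha'|\eta^+|}e^{\beta'|\eta^-|}c(\alpha',\beta';\eta)$ combined with conditions (B) and (C), and the identity $\widehat{L}^*=L^{\Delta}$ is established by verifying $\langle\widehat{L}G,k\rangle=\langle G,L^{\Delta}k\rangle$ for $G\in B_{bs}(\Gamma_0^2)$, $k\in\K_{\alpha,\beta}$, with Fubini justified through the absolute-value bounds encoded in $c(\alpha,\beta;\cdot)$. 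The only cosmetic difference is that the paper packages the Fubini check into an auxiliary positive operator $\mathcal{C}$ and subsumes your Mecke-type identity under a direct application of \eqref{PHD:01}, whereas you spell out the birth-term bookkeeping more explicitly; both lead to the same computation.
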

\begin{proof}
 Observe that for $k \in \K_{\alpha',\beta'}$
 \begin{align*}
  |L^{\Delta}k(\eta)| \leq \Vert k \Vert_{\K_{\alpha',\beta'}}a(\alpha',\beta') C |\eta|^{N+1} e^{(\alpha' + \tau)|\eta^+|}e^{(\beta' + \tau)|\eta^-|}.
 \end{align*}
 Then
 \begin{align*}
  |\eta|^{N+1} e^{-(\alpha - \alpha' - \tau)|\eta^+|}e^{-(\beta - \beta' - \tau)|\eta^-|}
  &\leq |\eta|^{N+1}e^{-\min\{\alpha - \alpha' - \tau, \beta - \beta' - \tau\}|\eta|}
  \\ &\leq \frac{(N+1)^{N+1}e^{-(N+1)}}{(\min\{\alpha - \alpha' - \tau, \beta - \beta' - \tau\})^{N+1}}
 \end{align*}
 implies
 \[
  |L^{\Delta}k(\eta)| \leq \frac{C a(\alpha',\beta')(N+1)^{N+1}e^{-(N+1)}}{(\min\{\alpha - \alpha' - \tau, \beta - \beta' - \tau\})^{N+1}}e^{\alpha|\eta^+|}e^{\beta|\eta^-|}\Vert k \Vert_{\K_{\alpha',\beta'}},
 \]
 i.e. $L^{\Delta}$ is bounded from $\K_{\alpha',\beta'}$ to $\K_{\alpha,\beta}$.
 Let us prove the second assertion. Define a positive linear mapping
 \begin{align*}
 (\mathcal{C}k)(\eta) &= \sum _{x \in \eta^-}\int _{\Gamma_0^2}k(\eta \cup \xi)
 |\mathbb{K}_0^{-1}d^-(x,\cdot \cup \eta^+, \cdot \cup \eta^- \backslash x)|(\xi)\,d\lambda(\xi)
  \\ &\ \ + \sum _{x \in \eta^+}\int _{\Gamma_0^2}k(\eta \cup \xi)|\mathbb{K}_0^{-1}d^+(x,\cdot \cup
  \eta^+ \backslash x, \cdot \cup \eta^-)|(\xi)\,d\lambda(\xi)
  \\ &\ \ + \sum _{x \in \eta^-}\int _{\Gamma_0^2}k(\eta^+ \cup \xi^+, \eta^- \cup \xi^- \backslash x)
  |\mathbb{K}_0^{-1}b^-(x,\cdot \cup \eta^+, \cdot \cup \eta^-\backslash x)|(\xi)\,d\lambda(\xi)
  \\ &\ \ + \sum _{x \in \eta^+}\int _{\Gamma_0^2}k(\eta^+ \cup \xi^+ \backslash x, \eta^- \cup \xi^-)
  |\mathbb{K}_0^{-1}b^+(x,\cdot \cup \eta^+ \backslash x, \cdot \cup \eta^-)|(\xi)\,d\lambda(\xi).
 \end{align*}
 Then, for $G \in B_{bs}(\Gamma_0^2)$ and $k \in \K_{\alpha,\beta}$ we have
 \[
  |k(\eta)|\cdot (M(\eta)|G(\eta)| + B'|G|(\eta)) \leq \Vert k \Vert_{\K_{\alpha,\beta}}e^{\alpha|\eta^+|}e^{\beta|\eta^-|}(M(\eta)|G(\eta)| + B'|G|(\eta))
 \]
 and
 $$
  |G(\eta)|\cdot \mathcal{C}|k|(\eta) \leq c(\alpha,\beta;\eta)\Vert k \Vert_{\K_{\alpha,\beta}}
  |G(\eta)|e^{\alpha|\eta^+|}e^{\beta|\eta^-|}
 $$
 which shows that $|k|\cdot (A + B')|G|$ and $|G(\eta)|\cdot \mathcal{C}|k|$ are both integrable. Hence we can apply \eqref{PHD:01} and
 obtain by a simple computation
 \[
  \int _{\Gamma_0^2}(\widehat{L}G)(\eta)k(\eta)\,d\lambda(\eta) = \int _{\Gamma_0^2}G(\eta)(L^{\Delta}k)(\eta)\,d\lambda(\eta).
 \]
 Let $k \in D_{\alpha,\beta}(L^{\Delta})$, then above equality implies $L^{\Delta} \subset \widehat{L}^*$.
 Conversely take $k \in D_{\alpha,\beta}(\widehat{L}^*)$. Then
 \begin{align*}
  \int _{\Gamma_0^2}G(\eta)(\widehat{L}^*k)(\eta)\,d\lambda(\eta) = \int _{\Gamma_0^2}(\widehat{L}G)(\eta)k(\eta)\,d\lambda(\eta)
  = \int _{\Gamma_0^2}G(\eta)(L^{\Delta}k)(\eta)\,d\lambda(\eta)
 \end{align*}
 implies $L^{\Delta}k = \widehat{L}^*k \in \K_{\alpha,\beta}$ and $D_{\alpha,\beta}(\widehat{L}^*) \subset D_{\alpha,\beta}(L^{\Delta})$.
\end{proof}

Since $\Lb_{\alpha,\beta}$ is not reflexive, $\widehat{T}_{\alpha,\beta}(t)^*$ does not need to be strongly continuous.
However, it is continuous w.r.t. the topology $\sigma((\Lb_{\alpha,\beta})^*, \K_{\alpha,\beta}) = \sigma( \K_{\alpha,\beta}, \Lb_{\alpha,\beta})$.
Here $\sigma(\K_{\alpha,\beta}, \Lb_{\alpha,\beta})$ is the smallest topology such that for each $k \in \K_{\alpha,\beta}$,
$\Lb_{\alpha,\beta} \ni G \longmapsto \langle G, k \rangle$ is continuous.
It is well-known that $\widehat{T}_{\alpha,\beta}(t)^*$ is strongly continuous on the proper subspace $\K_{\alpha,\beta}^{\odot} = \overline{D_{\alpha,\beta}(L^{\Delta})}$
and its restriction $\widehat{T}_{\alpha,\beta}(t)^{\odot} := \widehat{T}_{\alpha,\beta}(t)^*|_{\K_{\alpha,\beta}^{\odot}}$ is a $C_0$-semigroup with generator
$\widehat{L}^{\odot}k = L^{\Delta}k$,
\[
 D_{\alpha,\beta}(\widehat{L}^{\odot}) = \{ k \in D_{\alpha,\beta}(L^{\Delta})\ | \ L^{\Delta}k \in \K_{\alpha,\beta}^{\odot}\},
\]
see e.g. \cite{ENG00}. Hence, for each $k_0 \in D_{\alpha,\beta}(\widehat{L}^{\odot})$ there exists a unique classical solution to \eqref{PHDIPS:11}
on the Banach space $\mathcal{K}_{\alpha,\beta}^{\odot}$, which is given by $\widehat{T}_{\alpha,\beta}(t)^*k_0 =: k_t$
(that is $k_t$ is continuously differentiable w.r.t. the norm in $\K_{\alpha,\beta}$ and its derivative satisfies \eqref{PHDIPS:11}).
\begin{Remark}
 Note that $D_{\alpha,\beta}(\widehat{L}^{\odot})$ depends on the generator. Using the theory of sun-dual semigroups it is possible to find certain invariant subspaces
 which are only characterized by the constant $\tau$ in condition (B), and prove that
 the restriction of $\widehat{T}_{\alpha,\beta}(t)^*$ onto such spaces is a strongly continuous semigroup.
 This can be used to prove existence and uniqueness of classical solutions to \eqref{PHDIPS:11} on this subspaces (see \cite{FKK12, FKK15}).
\end{Remark}
Below we consider another approach, which can be used to show weak uniqueness of solutions to \eqref{PHDIPS:11}.
Let $\mathcal{C}(\K_{\alpha,\beta}, \Lb_{\alpha,\beta}) =: \mathcal{C}$ be the topology of
uniform convergence on compact subsets of $\Lb_{\alpha,\beta}$. A basis of neighbourhoods around $0$ is given by sets of the form
\[
 \{ k \in \K_{\alpha,\beta}\ | \ \sup _{G \in K}|\langle G, k\rangle| < \e \},
\]
with $\e > 0$ and a compact $K \subset \Lb_{\alpha,\beta}$, see \cite{L10, WZ02, WZ06} and the references therein.
The semigroup $(\widehat{T}_{\alpha,\beta}(t)^*)_{t \geq 0}$ becomes continuous w.r.t. $\mathcal{C}$ and its generator w.r.t. $\mathcal{C}$ is exactly the adjoint
operator $(L^{\Delta}, D_{\alpha,\beta}(L^{\Delta}))$, cf. \cite[Theorem 1.4]{WZ06}.
The next theorem provides existence, uniqueness and regularity of solutions to the Cauchy problem
\begin{align}\label{INTRO:02}
 \frac{d}{dt}\langle G, k_t \rangle = \langle \widehat{L}G, k_t \rangle, \quad k_t|_{t=0} = k_0,
 \quad G \in B_{bs}(\Gamma_0^2).
\end{align}
\begin{Theorem}\label{GMCSTH:01}
 Suppose that \eqref{PHDIPS:02} and (A) are satisfied. Then for any $k_0 \in \K_{\alpha,\beta}$ the equation \eqref{INTRO:02}
 has a unique solution given by $k_t = \widehat{T}_{\alpha,\beta}(t)^*k_0$. This means that $k_t$ is continuous w.r.t. the topology $\mathcal{C}$ and satisfies
 \begin{align}\label{GMCS:23}
  \langle G, k_t\rangle = \langle G, k_0 \rangle + \int _{0}^{t}\langle \widehat{L}G, k_s \rangle \,ds, \quad G \in B_{bs}(\Gamma_0^2).
 \end{align}
 Assume that conditions (B) and (C) are fulfilled and let $\alpha',\beta'$ be the corresponding constants. Then the following assertions are true:
 \begin{enumerate}
  \item If $k_0 \in \K_{\alpha', \beta'}$, then $k_t$ is continuous w.r.t. to the norm in $\K_{\alpha,\beta}$.
  \item If $k_0 \in \K_{\alpha',\beta'}$ and $\alpha' + 2\tau < \alpha$, $\beta' + 2 \tau < \beta$, then $k_t$ is also continuously differentiable w.r.t. to the norm
  in $\K_{\alpha,\beta}$ and the unique classical solution to \eqref{PHDIPS:11}.
 \end{enumerate}
\end{Theorem}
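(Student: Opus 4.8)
The plan is to take $k_t:=\widehat{T}_{\alpha,\beta}(t)^*k_0$ and show it is the unique $\mathcal{C}$-continuous solution of \eqref{INTRO:02}, its $\mathcal{C}$-continuity being already available through \cite[Theorem 1.4]{WZ06}. For the integral identity \eqref{GMCS:23} I would fix $G\in B_{bs}(\Gamma_0^2)\subset D_{\alpha,\beta}(\widehat{L})$; since $(\widehat{T}_{\alpha,\beta}(s))$ is strongly continuous on $\Lb_{\alpha,\beta}$ and $G$ lies in its generator's domain, $\widehat{T}_{\alpha,\beta}(s)G=G+\int_0^s\widehat{T}_{\alpha,\beta}(r)\widehat{L}G\,dr$ as a Bochner integral in $\Lb_{\alpha,\beta}$. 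Pairing with $k_0$ and passing the semigroup to the adjoint side gives $\langle G,k_t\rangle=\langle\widehat{T}_{\alpha,\beta}(t)G,k_0\rangle=\langle G,k_0\rangle+\int_0^t\langle\widehat{L}G,\widehat{T}_{\alpha,\beta}(s)^*k_0\rangle\,ds$, which is \eqref{GMCS:23}. For uniqueness, let $(k_t)$ be any $\mathcal{C}$-continuous solution; I would first extend \eqref{GMCS:23} from $B_{bs}(\Gamma_0^2)$ to all $H\in D_{\alpha,\beta}(\widehat{L})$ by approximating $H$ in the graph norm by core elements, using that $\{k_s:s\in[0,T]\}$ is $\mathcal{C}$-compact, hence norm-bounded. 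Then for fixed $T>0$ and $G\in D_{\alpha,\beta}(\widehat{L})$ I would consider $\phi(s):=\langle\widehat{T}_{\alpha,\beta}(T-s)G,k_s\rangle$: a product-rule computation, legitimate because the first factor is norm-differentiable in $\Lb_{\alpha,\beta}$ with derivative $-\widehat{L}\widehat{T}_{\alpha,\beta}(T-s)G$ while the extended identity contributes $+\langle\widehat{L}\widehat{T}_{\alpha,\beta}(T-s)G,k_s\rangle$, gives $\phi'\equiv0$. Hence $\langle G,k_T\rangle=\phi(0)=\langle G,\widehat{T}_{\alpha,\beta}(T)^*k_0\rangle$, and since $B_{bs}(\Gamma_0^2)$ separates $\K_{\alpha,\beta}$ we get $k_T=\widehat{T}_{\alpha,\beta}(T)^*k_0$.

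For the two regularity statements I would upgrade \eqref{GMCS:23} to an identity inside the Banach scale. Assuming (B) and (C), Lemma \ref{GMCSLEMMA:00} gives $L^{\Delta}=\widehat{L}^*\in L(\K_{\alpha',\beta'},\K_{\alpha,\beta})$; moreover, by Remark \ref{GMCSRK:04} and contractivity of the adjoint semigroup on $\K_{\alpha',\beta'}$, for $k_0\in\K_{\alpha',\beta'}$ one has $k_s\in\K_{\alpha',\beta'}$ with $\|k_s\|_{\K_{\alpha',\beta'}}\le\|k_0\|_{\K_{\alpha',\beta'}}$. Using $\langle\widehat{L}G,k_s\rangle=\langle G,L^{\Delta}k_s\rangle$ for $G\in B_{bs}(\Gamma_0^2)$ and the separating property, \eqref{GMCS:23} becomes the $\K_{\alpha,\beta}$-valued integral equation $k_t=k_0+\int_0^t L^{\Delta}k_s\,ds$, whose integrand is norm-bounded by $\|L^{\Delta}\|\,\|k_0\|_{\K_{\alpha',\beta'}}$. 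Assertion (1) is then immediate, since $\|k_t-k_{t'}\|_{\K_{\alpha,\beta}}\le|t-t'|\sup_s\|L^{\Delta}k_s\|_{\K_{\alpha,\beta}}$.

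For (2) the issue is that $s\mapsto k_s$ is in general only weakly, not norm, continuous in the fine space $\K_{\alpha',\beta'}$, so to differentiate the integral equation I would interpose the scale $(\alpha'',\beta''):=\left(\frac{\alpha+\alpha'}{2},\frac{\beta+\beta'}{2}\right)$; the hypotheses $\alpha'+2\tau<\alpha$ and $\beta'+2\tau<\beta$ are exactly what force $\alpha'+\tau<\alpha''<\alpha-\tau$ and $\beta'+\tau<\beta''<\beta-\tau$. The crucial analytic input is that, for fixed $\eta$, the map $(\alpha,\beta)\mapsto c(\alpha,\beta;\eta)$ of \eqref{EQ:02} is log-convex: each summand is a log-linear factor times a Laplace transform $\int_{\Gamma_0^2}e^{\alpha|\xi^+|}e^{\beta|\xi^-|}\,d\nu(\xi)$ of a positive measure, and finite sums of log-convex functions are log-convex by H\"older. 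Interpolating between the bound \eqref{PHDIPS:02} at $(\alpha,\beta)$ and condition (C) at $(\alpha',\beta')$ then yields $c(\alpha'',\beta'';\eta)\le a''M(\eta)$ at the midpoint, so Lemma \ref{GMCSLEMMA:00} applies to both pairs and delivers $L^{\Delta}\in L(\K_{\alpha',\beta'},\K_{\alpha'',\beta''})$ together with $L^{\Delta}\in L(\K_{\alpha'',\beta''},\K_{\alpha,\beta})$.

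With these two bounds I would rerun the previous paragraph with target $\K_{\alpha'',\beta''}$, obtaining $k_t=k_0+\int_0^t L^{\Delta}k_s\,ds$ in $\K_{\alpha'',\beta''}$ and hence $t\mapsto k_t$ norm-continuous there; composing with the bounded operator $L^{\Delta}$ from $\K_{\alpha'',\beta''}$ to $\K_{\alpha,\beta}$ shows $s\mapsto L^{\Delta}k_s$ is norm-continuous in $\K_{\alpha,\beta}$. The $\K_{\alpha,\beta}$-valued integral equation now has a norm-continuous integrand, so $t\mapsto k_t$ is continuously differentiable with $\frac{d}{dt}k_t=L^{\Delta}k_t$; as $k_t\in\K_{\alpha',\beta'}\subset D_{\alpha,\beta}(L^{\Delta})$, this is precisely a classical solution of \eqref{PHDIPS:11}, and uniqueness follows because any classical solution solves \eqref{INTRO:02}. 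The main obstacle is conceptual as much as technical: the adjoint semigroup is not strongly continuous on the nonreflexive space $\Lb_{\alpha,\beta}$, so norm-regularity can only be harvested one scale coarser, and the genuinely hard step is transferring the Ruelle bound (C) to the intermediate parameters $(\alpha'',\beta'')$, which is exactly where the stronger gap $2\tau$ and the log-convexity of $c(\cdot,\cdot;\eta)$ are indispensable. (One could instead note $\K_{\alpha',\beta'}\subset\overline{D_{\alpha,\beta}(L^{\Delta})}=\K_{\alpha,\beta}^{\odot}$ and invoke sun-dual theory, but the intermediate-scale computation keeps everything within the framework already built.)
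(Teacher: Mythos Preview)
Your treatment of the first part (existence and uniqueness of the $\mathcal{C}$-continuous solution) is essentially a direct unwinding of what the paper obtains by citing \cite[Theorem~2.1]{WZ06}; this is fine.

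For assertions (1) and (2), however, there is a genuine gap. Your argument hinges on the claim that for $k_0\in\K_{\alpha',\beta'}$ one has $k_s=\widehat{T}_{\alpha,\beta}(s)^*k_0\in\K_{\alpha',\beta'}$ with $\|k_s\|_{\K_{\alpha',\beta'}}\le\|k_0\|_{\K_{\alpha',\beta'}}$, which you justify via Remark~\ref{GMCSRK:04} and ``contractivity of the adjoint semigroup on $\K_{\alpha',\beta'}$''. But Remark~\ref{GMCSRK:04} (and the very existence of a contraction semigroup on $\Lb_{\alpha',\beta'}$, hence of its adjoint on $\K_{\alpha',\beta'}$) requires \eqref{PHDIPS:02} at the pair $(\alpha',\beta')$, i.e.\ $a(\alpha',\beta')\in(0,2)$. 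Condition~(C), however, only asserts $c(\alpha',\beta';\eta)\le a(\alpha',\beta')M(\eta)$ for \emph{some} $a(\alpha',\beta')>0$, with no upper bound. Your log-convexity interpolation likewise only produces $a''=\sqrt{a(\alpha,\beta)a(\alpha',\beta')}$ at the midpoint, which need not be $<2$; it suffices for the boundedness of $L^\Delta$ between the $\K$-spaces (that is all Lemma~\ref{GMCSLEMMA:00} uses), but not for running Theorem~\ref{PHDIPSTH:01} at $(\alpha',\beta')$ or $(\alpha'',\beta'')$. Consequently you have no control on $k_s$ in $\K_{\alpha',\beta'}$, and the integral equation $k_t=k_0+\int_0^t L^\Delta k_s\,ds$ in $\K_{\alpha,\beta}$ cannot be derived as written.

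The paper avoids this entirely: it never asserts that $k_s$ remains in $\K_{\alpha',\beta'}$. For (1) it simply observes that $k_0\in\K_{\alpha',\beta'}$ implies, by Lemma~\ref{GMCSLEMMA:00}, $L^\Delta k_0\in\K_{\alpha,\beta}$, hence $k_0\in D_{\alpha,\beta}(L^\Delta)\subset\K_{\alpha,\beta}^{\odot}$, and strong continuity follows from sun-dual theory. For (2) it interposes $(\alpha'',\beta'')$ only to conclude $L^\Delta k_0\in\K_{\alpha'',\beta''}\subset D_{\alpha,\beta}(L^\Delta)\subset\K_{\alpha,\beta}^{\odot}$, so that $k_0\in D_{\alpha,\beta}(\widehat{L}^{\odot})$ and classical differentiability comes from the $C_0$-semigroup on $\K_{\alpha,\beta}^{\odot}$. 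In short, the paper uses the membership of the \emph{initial datum} in suitable domains, whereas you try to propagate the fine-scale bound along the trajectory, which the hypotheses do not support. Your parenthetical remark about the sun-dual route is in fact the correct argument here, not an optional alternative.
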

\begin{proof}
 Existence and uniqueness for the Cauchy problem \eqref{GMCS:23} follows from \cite[Theorem~2.1]{WZ06} and Theorem \ref{PHDIPSTH:01} if one replaces
 $B_{bs}(\Gamma_0^2)$ by $D_{\alpha,\beta}(\widehat{L})$ in \eqref{GMCS:23}. But since $B_{bs}(\Gamma_0^2)$ is a core, uniqueness is proved.
 Moreover, since $\widehat{T}_{\alpha,\beta}(t)^*$ is continuous w.r.t. $\sigma(\K_{\alpha,\beta}, \Lb_{\alpha,\beta})$,
 $t \longmapsto \langle \widehat{L}G, k_t\rangle$ is continuous and hence \eqref{GMCS:23} implies \eqref{INTRO:02}.

 1. If $k_0 \in \K_{\alpha',\beta'}$, then by Lemma \ref{GMCSLEMMA:00} $L^{\Delta}k_0 \in \K_{\alpha,\beta}$ and hence
 $k_0 \in D_{\alpha,\beta}(L^{\Delta}) \subset \K_{\alpha,\beta}^{\odot}$ which implies the assertion.

 2. Suppose that $\alpha' + 2\tau < \alpha$, $\beta' + 2\tau < \beta$ and let $\alpha'' \in (\alpha', \alpha)$, $\beta'' \in (\beta', \beta)$ be such that
 $\alpha' + \tau < \alpha''$, $\alpha'' + \tau < \alpha$ and $\beta' + \tau < \beta''$, $\beta'' + \tau < \beta$.
 By Lemma \ref{GMCSLEMMA:00} the operator $L^{\Delta}$ is bounded as $\K_{\alpha',\beta'} \to \K_{\alpha'',\beta''}$ and $\K_{\alpha'',\beta''} \to \K_{\alpha,\beta}$.
 Therefore $k_0 \in D_{\alpha,\beta}(L^{\Delta})$ and $L^{\Delta}k_0 \in \K_{\alpha'',\beta''} \subset D_{\alpha,\beta}(L^{\Delta})$.
 Thus $k_0 \in D_{\alpha,\beta}(\widehat{L}^{\odot})$ implies that $k_t$
 is continuously differentiable w.r.t. the norm in $\K_{\alpha,\beta}$ and it is a classical solution to \eqref{PHDIPS:11}.
\end{proof}

\subsection{Positive definiteness}
Suppose that (A) and \eqref{PHDIPS:02} are satisfied.
We start with the definition of solutions to the Fokker-Planck equation \eqref{INTRO:01}.
\begin{Definition}
 A family of Borel probability measures $(\mu_t)_{t \geq 0} \subset \mathcal{P}_{\alpha,\beta}$ is said to be a weak solution to \eqref{INTRO:01}
 if for any $F \in \mathcal{FP}(\Gamma^2)$, $t \longmapsto \langle LF, \mu_t\rangle$ is locally integrable and satisfies
 \begin{align}\label{GMCS:26}
  \langle F,\mu_t \rangle = \langle F, \mu_0 \rangle + \int _{0}^{t} \langle LF, \mu_s \rangle \,ds, \quad t \geq 0.
 \end{align}
\end{Definition}
Note that $\langle F, \mu_t\rangle$ is well-defined since $F$ is a polynomially bounded cylinder function and $\mu_t$ has finite local moment.
Uniqueness is stated in the next theorem.
\begin{Theorem}\label{GMCSLEMMA:04}(Uniqueness)\
 Suppose that (A) and \eqref{PHDIPS:02} are fulfilled. Then equation \eqref{INTRO:01} has at most one weak solution $(\mu_t)_{t \geq 0} \subset \mathcal{P}_{\alpha,\beta}$
 such that its correlation functions $(k_t)_{t \geq 0}$ satisfy
 \[
  \sup _{t \in [0,T]}\ \Vert k_t \Vert_{\K_{\alpha,\beta}} < \infty, \quad \forall T > 0.
 \]
\end{Theorem}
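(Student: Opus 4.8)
The plan is to reduce the uniqueness of weak solutions to \eqref{INTRO:01} to the uniqueness of solutions of the correlation-function equation \eqref{INTRO:02}, which is already guaranteed by Theorem \ref{GMCSTH:01}. Let $(\mu_t)_{t \geq 0} \subset \mathcal{P}_{\alpha,\beta}$ be a weak solution with correlation functions $(k_t)_{t \geq 0} \subset \K_{\alpha,\beta}$ satisfying $\sup_{t \in [0,T]} \Vert k_t \Vert_{\K_{\alpha,\beta}} =: C_T < \infty$. First I would rewrite \eqref{GMCS:26} in terms of $k_t$. Every $F \in \mathcal{FP}(\Gamma^2)$ is of the form $F = \mathbb{K}G$ with $G = \mathbb{K}^{-1}F \in B_{bs}(\Gamma_0^2)$, and by \eqref{PHDIPS:17} one has $\langle F, \mu_t \rangle = \langle G, k_t \rangle$. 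Using the intertwining relation $\widehat{L} = \mathbb{K}_0^{-1} L \mathbb{K}_0$, i.e. $L\mathbb{K}G = \mathbb{K}\widehat{L}G$, together with the extension of the duality \eqref{PHDIPS:17} to $\widehat{L}G \in \Lb_{\alpha,\beta} \subset L^1(\Gamma_0^2, k_t\,d\lambda)$ (which is licit since $|\langle \widehat{L}G, k_t\rangle| \leq \Vert \widehat{L}G\Vert_{\Lb_{\alpha,\beta}} C_T$), I obtain $\langle LF, \mu_s \rangle = \langle \widehat{L}G, k_s \rangle$. Substituting into \eqref{GMCS:26} shows that $(k_t)$ satisfies \eqref{GMCS:23} for every $G \in B_{bs}(\Gamma_0^2)$, with initial datum $k_0 = k_{\mu_0}$, which is the same for any two weak solutions sharing the initial state $\mu_0$.

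The main obstacle is to verify that $t \mapsto k_t$ is continuous with respect to the topology $\mathcal{C}$, since this is exactly the regularity in which Theorem \ref{GMCSTH:01} asserts uniqueness, whereas a priori we only control $\sup_{t \in [0,T]} \Vert k_t \Vert_{\K_{\alpha,\beta}}$. Here the uniform bound is essential. For a fixed compact $K \subset \Lb_{\alpha,\beta}$ and $\e > 0$ I would use total boundedness of $K$ together with the density of $B_{bs}(\Gamma_0^2)$ in $\Lb_{\alpha,\beta}$ to choose $G_1, \dots, G_n \in B_{bs}(\Gamma_0^2)$ such that every $G \in K$ lies within $\delta$ of some $G_j$ in the $\Lb_{\alpha,\beta}$-norm. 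Splitting
\[
 |\langle G, k_t - k_{t'}\rangle| \leq |\langle G_j, k_t - k_{t'}\rangle| + \Vert G - G_j\Vert_{\Lb_{\alpha,\beta}}\big(\Vert k_t\Vert_{\K_{\alpha,\beta}} + \Vert k_{t'}\Vert_{\K_{\alpha,\beta}}\big),
\]
the second term is bounded by $2 C_T \delta$, while for each of the finitely many $G_j$ the map $t \mapsto \langle G_j, k_t\rangle$ is continuous because \eqref{GMCS:23} represents it as a constant plus the integral of the bounded function $s \mapsto \langle \widehat{L}G_j, k_s\rangle$. Choosing $\delta$ small and then $|t - t'|$ small makes $\sup_{G \in K}|\langle G, k_t - k_{t'}\rangle| < \e$, which is precisely $\mathcal{C}$-continuity.

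Finally I would invoke Theorem \ref{GMCSTH:01}: since $(k_t)$ is a $\mathcal{C}$-continuous solution of \eqref{INTRO:02} (equivalently \eqref{GMCS:23}) with initial value $k_0 = k_{\mu_0}$, it must coincide with $\widehat{T}_{\alpha,\beta}(t)^* k_0$ and is thus uniquely determined by $k_0$. Consequently, any two weak solutions with the same initial state $\mu_0$ and with uniformly bounded correlation functions on $[0,T]$ have identical correlation functions $k_t$ for all $t$. Since each $\mu_t \in \mathcal{P}_{\alpha,\beta}$ is, by Theorem \ref{GMCSTH:05}(2), the unique element of $\mathcal{P}_{\alpha,\beta}$ with the given correlation function, this forces the two solutions to agree, proving uniqueness. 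I expect the verification of $\mathcal{C}$-continuity to be the only genuinely delicate point; the reduction to \eqref{INTRO:02} and the final appeal to Theorems \ref{GMCSTH:01} and \ref{GMCSTH:05} are routine once the dualities are justified.
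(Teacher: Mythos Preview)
Your proposal is correct and follows essentially the same route as the paper: reduce \eqref{GMCS:26} to the integral identity \eqref{GMCS:23} for the correlation functions via the duality $\langle LF,\mu_t\rangle=\langle \widehat{L}G,k_t\rangle$, then use the uniqueness statement of Theorem~\ref{GMCSTH:01}. The only notable difference is that where the paper observes that \eqref{GMCS:23} gives $\sigma(\K_{\alpha,\beta},\Lb_{\alpha,\beta})$-continuity of $t\mapsto k_t$ and then invokes \cite[Lemma~1.10]{WZ06} (local norm-boundedness upgrades weak$^*$-continuity to $\mathcal{C}$-continuity), you supply that lemma's content directly via the total-boundedness/density argument; this is a valid and self-contained substitute. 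Your explicit final appeal to Theorem~\ref{GMCSTH:05}(2) to pass from uniqueness of $k_t$ back to uniqueness of $\mu_t$ is a point the paper leaves implicit.
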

\begin{proof}
 Let $(\mu_t)_{t \geq 0} \subset \mathcal{P}_{\alpha,\beta}$ be a solution to \eqref{INTRO:01}, and denote by $(k_t)_{t \geq 0} \subset \K_{\alpha,\beta}$
 the associated correlation functions. Let $F \in \mathcal{FP}(\Gamma^2)$ and $G \in B_{bs}(\Gamma_0^2) \subset D_{\alpha,\beta}(\widehat{L})$ such that $F = \mathbb{K}G$.
 Then by $k_t(\eta) \leq \Vert k_t \Vert_{\K_{\alpha,\beta}}e^{\alpha|\eta^+|}e^{\beta|\eta^-|}$
 it follows that $G, \widehat{L}G \in \Lb_{\alpha,\beta} \subset \Lb_{k_t}$.
 Since $\mathbb{K}: \Lb_{k_t} \longrightarrow L^1(\Gamma^2, d\mu_t)$
 is continuous it follows that $F = \mathbb{K}G, L\mathbb{K}G = \mathbb{K} \widehat{L}G$ belong to $L^1(\Gamma^2, d\mu_t)$ for any $t \geq 0$.
 Moreover,
 \[
  \langle LF, \mu_t\rangle = \langle \mathbb{K}\widehat{L}G, \mu_t\rangle = \langle \widehat{L}G, k_t\rangle
 \]
 shows that $t \longmapsto \langle \widehat{L}G, k_t \rangle$ is locally integrable and hence
 \[
  \langle G, k_t\rangle = \langle G, k_0\rangle + \int _{0}^{t}\langle \widehat{L}G, k_s \rangle \,ds,
  \quad t \geq 0, \quad G \in B_{bs}(\Gamma_0^2)
 \]
 holds. Thus $k_t$ is continuous w.r.t. $\sigma(\K_{\alpha,\beta}, \Lb_{\alpha,\beta})$ and since $k_t$ is norm-bounded on $[0,T]$ \cite[Lemma 1.10]{WZ06}
 implies that $k_t$ is also continuous w.r.t. the topology $\mathcal{C}$.
 As a consequence $(k_t)_{t \geq 0}$ is a weak solution to \eqref{GMCS:23} and hence it is given by $k_t = \widehat{T}_{\alpha,\beta}(t)^*k_0$.
\end{proof}
\begin{Remark}
 Let $k_0 \in \K_{\alpha,\beta}$ be positive definite and suppose that $k_t := \widehat{T}_{\alpha,\beta}(t)^*k_0 \in \K_{\alpha,\beta}$ is positive definite.
 Then $(k_t)_{t \geq 0}$ is a weak solution to \eqref{GMCS:23} and for each $t \geq 0$ there exists a unique $\mu_t \in \mathcal{P}_{\alpha,\beta}$
 having correlation function $k_t$. By $\langle G,k_t \rangle = \langle F, \mu_t \rangle$ and
 $\langle \widehat{L}G, k_t\rangle = \langle LF, \mu_t \rangle$ it follows that $(\mu_t)_{t \geq 0}$ is a weak solution to \eqref{INTRO:01}.
\end{Remark}
Above considerations show that for existence of weak solutions to \eqref{INTRO:01}, it suffices to show that $\widehat{T}_{\alpha,\beta}(t)^*$
preserves the cone of positive definite functions.
To this and we approximate $k_t = \widehat{T}_{\alpha,\beta}(t)^*k_0$ by an auxiliary evolution $\widehat{T}_{\alpha,\beta}^{\delta}(t)^*k_0$ and prove that
$\widehat{T}_{\alpha,\beta}^{\delta}(t)^*k_0$ is positive definite.
Such approximation scheme was used for particular models in \cite{FKKZ12, KK16}.

Let $(R_{\delta})_{\delta > 0}$ be a sequence of continuous
integrable functions with $0 < R_{\delta} \leq 1$ and
$R_{\delta}(x)
\nearrow 1$ as $\delta \to 0$ for all $x \in \R^d$. In the
following we simply say that $(R_{\delta})_{\delta > 0}$ is a
localization sequence. Define new birth intensities by
$b^+_{\delta}(x,\eta) := R_{\delta}(x)b^+(x,\eta)$ and
$b^-_{\delta}(x,\eta) := R_{\delta}(x)b^-(x,\eta)$ for all $x \in
\R^d$ and $\eta \in \Gamma_0^2$. Then for any $\eta \in
\Gamma_0^2$ and $\delta > 0$
\begin{align}\label{GMCS:29}
 \int _{\R^d}\left(b_{\delta}^+(x,\eta) + b_{\delta}^-(x,\eta)\right)\,dx < \infty
\end{align}
holds. Denote by $L_{\delta}$ the operator $L$ with $b^+, b^-$ replaced by $b_{\delta}^+, b_{\delta}^-$.
We will consider the latter operator on a proper set of functions $F: \Gamma_0^2 \longrightarrow \R$.

Let $D_{\delta}(\eta) = M(\eta) + \int
_{\R^d}b_{\delta}^+(x,\eta)\,dx + \int
_{\R^d}b_{\delta}^-(x,\eta)\,dx$ and define
\begin{align*}
 Q_{\delta}\rho(\eta) &= \int _{\R^d}d^-(x,\eta)\rho(\eta^+,\eta^- \cup x)\,dx + \int _{\R^d}d^+(x,\eta)\rho(\eta^+
 \cup x,\eta^-)\,dx
 \\ & \ \ \ + \sum _{x \in \eta^-}b_{\delta}^-(x,\eta^+,\eta^- \backslash x)\rho(\eta^+,\eta^- \backslash x) + \sum _{x \in \eta^+}b_{\delta}^+(x,\eta^+ \backslash x, \eta^-)\rho(\eta^+ \backslash x, \eta^-).
\end{align*}
The linear operator $\mathcal{I}_{\delta} = -D_{\delta} + Q_{\delta}$ is considered on the domain
\[
 D(\mathcal{I}_{\delta}) = \{ \rho \in L^1(\Gamma_0^2, \,d\lambda) \ | \ D_{\delta} \rho \in L^1(\Gamma_0^2, \,d\lambda)\},
\]
where $D_{\delta}$ acts as a multiplication operator on $L^1(\Gamma_0^2,\, d\lambda)$.
Note that an analogue of $D_{\delta}$ was already introduced in \cite{P75}.
Since $Q_{\delta}$ preserves positivity and satisfies
\[
 \int_{\Gamma_0^2}Q_{\delta}\rho(\eta)\,d\lambda(\eta) = \int_{\Gamma_0^2}D_{\delta}(\eta)\rho(\eta)
 \,d\lambda(\eta), \quad 0 \leq  \rho \in D(\mathcal{I}_{\delta}),
\]
it follows that
$(\mathcal{I}_{\delta}, D(\mathcal{I}_{\delta}))$ has an extension $(\mathcal{G}_{\delta}, D(\mathcal{G}_{\delta}))$
which is the generator of a sub-stochastic semigroup
$(S_{\delta}(t))_{t \geq 0}$ on $L^1(\Gamma_0^2, d\lambda)$, cf. \cite[Theorem 2.2]{TV06}.
Here sub-stochastic means that $S_{\delta}(t)$ is a strongly continuous semigroup of contractions which preserves positivity.
Moreover, this semigroup is minimal in the sense that given another sub-stochastic semigroup $U_{\delta}(t)$
such that its generator is an extension of $(\mathcal{I}_{\delta}, D(\mathcal{I}_{\delta}))$, then $S_{\delta}(t) \leq U_{\delta}(t)$.
Note that, in general, $\Vert S_{\delta}(t)\rho_0 \Vert_{L^1(\Gamma_0^2,\,d\lambda)} < \Vert \rho_0 \Vert_{L^1(\Gamma_0^2,\,d\lambda)}$
may happen for some $t > 0$ (see e.g. \cite{BA06} and the references therein).
It is related to $\overline{\mathcal{I}}_{\delta} = \mathcal{G}_{\delta}$ and is the main ingredient for the next condition.
\begin{enumerate}
 \item[(D)] There exists a localization sequence $(R_{\delta})_{\delta > 0}$ such that
 $\overline{\mathcal{I}}_{\delta} = \mathcal{G}_{\delta}$, i.e. for each $\rho_0 \in D(\mathcal{G}_{\delta})$ there exists a unique classical solution to
 \begin{align}\label{GMCS:31}
  \frac{\partial \rho^{\delta}_t}{\partial t} = \mathcal{G}_{\delta}\rho^{\delta}_t, \quad \rho^{\delta}_t|_{t=0} = \rho_0.
 \end{align}
\end{enumerate}
Note that condition (D) implies that the semigroup $S_{\delta}(t)$ is stochastic.
For technical reasons we will need the adjoint semigroup on $L^{\infty}(\Gamma_0^2,\,d\lambda)$.
Let $(\mathcal{J}_{\delta},D(\mathcal{J}_{\delta}))$ be the adjoint operator to
$(\mathcal{I}_{\delta}, D(\mathcal{I}_{\delta}))$ on
$L^{\infty}(\Gamma_0^2,\,d\lambda)$. The next lemma follows by standard arguments and \eqref{PHD:01}.
\begin{Lemma}\label{GMCSLEMMA:03}
 For any $F \in D(\mathcal{J}_{\delta})$ it holds that $\mathcal{J}_{\delta}F = L_{\delta}F$.
\end{Lemma}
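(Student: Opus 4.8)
The plan is to establish the adjoint relation
$\langle \mathcal{I}_{\delta}\rho, F\rangle = \langle \rho, L_{\delta}F\rangle$ for every $\rho \in D(\mathcal{I}_{\delta})$ and $F \in L^{\infty}(\Gamma_0^2, d\lambda)$, where $\langle \rho, F\rangle = \int_{\Gamma_0^2}\rho(\eta)F(\eta)\,d\lambda(\eta)$, and then to read off $\mathcal{J}_{\delta}F = L_{\delta}F$ from the definition of the adjoint. First I would record the explicit form of $L_{\delta}F$: replacing $\gamma$ by $\eta \in \Gamma_0^2$ and $b^{\pm}$ by $b_{\delta}^{\pm}$ in \eqref{PHDIPS:04}--\eqref{PHDIPS:05}, this is the sum of the four death/birth gain terms, in which $F$ is evaluated at the shifted configurations $(\eta^+, \eta^- \backslash x)$, $(\eta^+ \backslash x, \eta^-)$, $(\eta^+, \eta^- \cup x)$ and $(\eta^+ \cup x, \eta^-)$, together with the diagonal loss collecting every coefficient of $-F(\eta)$.

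Next I split $\mathcal{I}_{\delta} = -D_{\delta} + Q_{\delta}$. The multiplication part pairs directly: $-\int_{\Gamma_0^2}D_{\delta}(\eta)\rho(\eta)F(\eta)\,d\lambda(\eta)$ matches the diagonal loss in $L_{\delta}F$ once one recalls $D_{\delta} = M + \int_{\R^d}b_{\delta}^+\,dx + \int_{\R^d}b_{\delta}^-\,dx$ and the definition of $M$. The essential step is to convert the four terms of $Q_{\delta}$ by means of the single-point Mecke-type identity obtained from \eqref{PHD:01} on specialising the inner subset to a singleton in the $-$ (resp. $+$) component,
\[
 \int_{\Gamma_0}\sum_{x\in\eta^-}\Psi(x, \eta^-\backslash x)\,d\lambda(\eta^-) = \int_{\R^d}\int_{\Gamma_0}\Psi(x, \eta^-)\,d\lambda(\eta^-)\,dx.
\]
For instance, the third term of $Q_{\delta}$ paired with $F$: substituting $\eta^- \mapsto \eta^- \cup x$ turns $\int_{\Gamma_0^2}\big(\sum_{x\in\eta^-}b_{\delta}^-(x, \eta^+, \eta^-\backslash x)\rho(\eta^+, \eta^-\backslash x)\big)F(\eta)\,d\lambda(\eta)$ into $\int_{\Gamma_0^2}\rho(\eta)\int_{\R^d}b_{\delta}^-(x, \eta)F(\eta^+, \eta^-\cup x)\,dx\,d\lambda(\eta)$, precisely the $b^-$ birth gain of $L_{\delta}F$. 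Applying the same move to the first term of $Q_{\delta}$ (shifting the continuous death integral into a sum over $\eta^-$) reproduces the $d^-$ death gain, and the $+$-component identity treats the second and fourth terms analogously. Summing the four converted terms with the diagonal contribution gives $\langle \mathcal{I}_{\delta}\rho, F\rangle = \langle \rho, L_{\delta}F\rangle$.

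All interchanges of summation and integration are legitimate because $\rho \in D(\mathcal{I}_{\delta})$ forces $D_{\delta}\rho \in L^1(\Gamma_0^2, d\lambda)$, whence the balance identity $\int_{\Gamma_0^2}Q_{\delta}|\rho|\,d\lambda = \int_{\Gamma_0^2}D_{\delta}|\rho|\,d\lambda$ (its version for $|\rho|$) yields $Q_{\delta}\rho \in L^1$ as well; as $F \in L^{\infty}$, Tonelli applied to the nonnegative integrand $D_{\delta}(\eta)|\rho(\eta)|\,|F(\eta)| + \ldots$ justifies Fubini for each individual term. To finish, for $F \in D(\mathcal{J}_{\delta})$ the definition of the adjoint gives $\langle \mathcal{I}_{\delta}\rho, F\rangle = \langle \rho, \mathcal{J}_{\delta}F\rangle$, so the computed relation forces $\langle \rho, \mathcal{J}_{\delta}F - L_{\delta}F\rangle = 0$ for all $\rho \in D(\mathcal{I}_{\delta})$; because $D(\mathcal{I}_{\delta})$ is dense in $L^1(\Gamma_0^2, d\lambda)$ (it is the natural maximal domain of the multiplication operator $D_{\delta}$, hence dense), this gives $\mathcal{J}_{\delta}F = L_{\delta}F$ $\lambda$-a.e.

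The main obstacle, and the only place needing genuine care, is the bookkeeping in the Mecke substitutions: one must match each shifted argument of $\rho$ in $Q_{\delta}$ with the correspondingly shifted argument of $F$ in $L_{\delta}F$, track which component is being acted upon, and check that the intensity $d^{\pm}(x,\cdot)$ or $b_{\delta}^{\pm}(x,\cdot)$ lands on the correct configuration after the change of variables. A secondary subtlety is that $F$ is only an $L^{\infty}$-equivalence class, so $L_{\delta}F$ is a priori ambiguous at the shifted points; this is harmless here, since the pairing integrates those shifts against $dx$ and $d\lambda$, so the resulting functionals depend only on the a.e. class of $F$.
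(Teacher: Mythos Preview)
Your proposal is correct and follows exactly the route the paper indicates: the paper simply states that the lemma ``follows by standard arguments and \eqref{PHD:01}'', and your argument is precisely the standard duality computation using the single-point specialisation of \eqref{PHD:01} (the Mecke-type identity) to move between sums over $\eta^{\pm}$ and integrals over $\R^d$, together with the integrability coming from $D_{\delta}\rho \in L^1$. Your handling of the density of $D(\mathcal{I}_{\delta})$ and of the a.e.\ ambiguity of $L_{\delta}F$ is appropriate; note also that the bound $|L_{\delta}F(\eta)| \leq 2\Vert F\Vert_{\infty}D_{\delta}(\eta)$ confirms that $\langle \rho, L_{\delta}F\rangle$ is a bona fide pairing for $\rho \in D(\mathcal{I}_{\delta})$.
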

\begin{Remark}
 Condition (D) is a non-explosion condition. It is fulfilled, provided one can find a proper Lyapunov functional.
 Sufficient conditions and related results how such condition can be checked are given in \cite{F16, FK16}.
 A general (and rather easy) criterion can be found in \cite{TV06}. A general approach to study condition (D) in an abstract
 setting is given in \cite{BA06} (see also the references therein).
\end{Remark}
The next statement provides existence and uniqueness of solutions to \eqref{INTRO:01}.
\begin{Proposition}\label{PHDIPSTH:02}(Existence)
 Suppose that (A)--(D) and \eqref{PHDIPS:02} are fulfilled.
 Then $\widehat{T}_{\alpha,\beta}(t)^* \K_{\alpha', \beta'}^+ \subset \K_{\alpha,\beta}^+$. In particular for any $\mu_0 \in \mathcal{P}_{\alpha',\beta'}$
 there exists exactly one weak solution $(\mu_t)_{t \geq 0} \subset \mathcal{P}_{\alpha,\beta}$ to \eqref{INTRO:01}.
 The correlation functions are given by $k_{\mu_t} = \widehat{T}_{\alpha,\beta}(t)^*k_{\mu_0}$.
 If conditions (B) and (C) hold for all $\tau > 0$, then $\widehat{T}_{\alpha,\beta}(t)^* \K_{\alpha, \beta}^+ \subset \K_{\alpha,\beta}^+$ holds.
\end{Proposition}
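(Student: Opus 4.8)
The plan is to reduce the entire statement to the single assertion that $\widehat T_{\alpha,\beta}(t)^*$ maps the cone of positive definite functions into itself, and then to prove this inclusion by localizing the birth rates and passing to the limit.

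\emph{Reduction.} Let $\mu_0\in\mathcal P_{\alpha',\beta'}$ with correlation function $k_0=k_{\mu_0}\in\K_{\alpha',\beta'}^+$, $k_0(\emptyset)=1$, and put $k_t:=\widehat T_{\alpha,\beta}(t)^*k_0$. Since $\widehat T_{\alpha,\beta}(t)$ is a contraction on $\Lb_{\alpha,\beta}$ by Theorem \ref{PHDIPSTH:01}(a), its adjoint satisfies $\Vert k_t\Vert_{\K_{\alpha,\beta}}\le\Vert k_0\Vert_{\K_{\alpha,\beta}}$, so the norm bound required in Theorem \ref{GMCSLEMMA:04} holds and any weak solution is unique and equal to $(\mu_t)$. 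From $\widehat T_{\alpha,\beta}(t)\one^*=\one^*$ and $\langle\one^*,k\rangle=k(\emptyset)$ we get $k_t(\emptyset)=\langle\one^*,k_t\rangle=\langle\widehat T_{\alpha,\beta}(t)\one^*,k_0\rangle=k_0(\emptyset)=1$. Hence, \emph{once we know $k_t\in\K_{\alpha,\beta}^+$}, Theorem \ref{GMCSTH:05}(2) produces a unique $\mu_t\in\mathcal P_{\alpha,\beta}$ with correlation function $k_t$, and the Remark preceding this proposition identifies $(\mu_t)_{t\ge0}$ as the weak solution of \eqref{INTRO:01}. Thus everything reduces to $\widehat T_{\alpha,\beta}(t)^*\K_{\alpha',\beta'}^+\subset\K_{\alpha,\beta}^+$.

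\emph{Localized positivity.} Fix a localization sequence and the rates $b_\delta^\pm=R_\delta b^\pm$. Since $b_\delta^\pm\le b^\pm$ while $M$ is unchanged, \eqref{PHDIPS:02} still holds for $L_\delta$, so Theorem \ref{PHDIPSTH:01} and Theorem \ref{GMCSTH:01} apply to $L_\delta$ and yield $\widehat T_{\alpha,\beta}^\delta(t)^*$ on $\K_{\alpha,\beta}$. The point of localization is \eqref{GMCS:29}: $L_\delta$ is the backward generator of the genuine finite-activity birth-and-death process on $\Gamma_0^2$ whose statistical dynamics is the semigroup $S_\delta(t)$ on $L^1(\Gamma_0^2,d\lambda)$, which is \emph{stochastic} by condition (D). Writing $\widehat{\mathbb K}$ for the transform $(\widehat{\mathbb K}\rho)(\eta)=\int_{\Gamma_0^2}\rho(\eta\cup\xi)\,d\lambda(\xi)$ that sends a finite-configuration density to its correlation function, the key identity, to be checked by a direct computation based on \eqref{PHD:01}, is the intertwining $L^\Delta_\delta\,\widehat{\mathbb K}=\widehat{\mathbb K}\,\mathcal I_\delta$ on a common core. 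Together with the uniqueness in Theorem \ref{GMCSTH:01} for $L_\delta$ this gives
\[
 \widehat T_{\alpha,\beta}^\delta(t)^*\,\widehat{\mathbb K}\rho_0=\widehat{\mathbb K}\,S_\delta(t)\rho_0,\qquad 0\le\rho_0\in L^1(\Gamma_0^2,d\lambda),\ \textstyle\int\rho_0\,d\lambda=1 .
\]
As $S_\delta(t)$ is stochastic, $S_\delta(t)\rho_0\,d\lambda$ is again a probability measure on $\Gamma_0^2\subset\Gamma^2$, so by Theorem \ref{GMCSTH:05}(1) its correlation function $\widehat{\mathbb K}S_\delta(t)\rho_0$ is positive definite. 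Hence $\widehat T_{\alpha,\beta}^\delta(t)^*$ maps every finite-configuration correlation function into $\K_{\alpha,\beta}^+$. This intertwining, together with the verification that $\widehat{\mathbb K}S_\delta(t)\rho_0$ lands in $\K_{\alpha,\beta}$ with the bounds needed to invoke uniqueness, is the main obstacle, and it is precisely here that condition (D) enters: it guarantees no loss of mass, so the evolving state stays a probability measure.

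\emph{From finite to general initial data, and the limit $\delta\to0$.} A general $k_0\in\K_{\alpha',\beta'}^+$ is approximated by the finite-volume restrictions $k_0^{(n)}:=k_0\,\one_{\{\eta\subset\Lambda_n\}}$, which are the correlation functions of the projected measures $\mu_0\circ p_{\Lambda_n,\Lambda_n}^{-1}$, hence of the previous type; they obey $\Vert k_0^{(n)}\Vert_{\K_{\alpha',\beta'}}\le\Vert k_0\Vert_{\K_{\alpha',\beta'}}$ and $k_0^{(n)}\to k_0$ in $\sigma(\K_{\alpha,\beta},\Lb_{\alpha,\beta})$. Since $\widehat T_{\alpha,\beta}^\delta(t)^*$ is continuous for this topology and $\K_{\alpha,\beta}^+$ is $\sigma(\K_{\alpha,\beta},\Lb_{\alpha,\beta})$-closed (an intersection of half-spaces $\{\langle G,\cdot\rangle\ge0\}$, $G\in B_{bs}^+(\Gamma_0^2)$), we obtain $\widehat T_{\alpha,\beta}^\delta(t)^*k_0\in\K_{\alpha,\beta}^+$ for every $\delta$. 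Letting $R_\delta\nearrow1$ we have $(b^\pm-b_\delta^\pm)\to0$ pointwise with the integrable domination from (A), so the continuous-dependence theorem proved after Theorem \ref{PHDIPSTH:01} yields $\widehat T_{\alpha,\beta}^\delta(t)G\to\widehat T_{\alpha,\beta}(t)G$ in $\Lb_{\alpha,\beta}$, whence $\widehat T_{\alpha,\beta}^\delta(t)^*k_0\to\widehat T_{\alpha,\beta}(t)^*k_0$ in $\sigma(\K_{\alpha,\beta},\Lb_{\alpha,\beta})$. Closedness of the cone then gives $k_t\in\K_{\alpha,\beta}^+$, which completes the reduction. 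The final assertion follows by the same scheme with $\alpha',\beta'$ taken arbitrarily close to $\alpha,\beta$ (possible once (B) and (C) hold for every $\tau>0$) and a limiting argument using the consistency of the semigroups on the scale $\{\K_{\alpha',\beta'}\}$ from Remark \ref{GMCSRK:04} together with the closedness of $\K_{\alpha,\beta}^+$.
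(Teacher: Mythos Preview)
Your reduction step is fine and matches the paper. The gap is in the ``localized positivity'' step, and you yourself flag it without resolving it: you need $\widehat{\mathbb K}S_\delta(t)\rho_0\in\K_{\alpha,\beta}$ with uniform-in-$t$ norm control in order to invoke the uniqueness of Theorem \ref{GMCSTH:01} and identify it with $\widehat T_{\alpha,\beta}^\delta(t)^*\widehat{\mathbb K}\rho_0$. Condition (D) only tells you $S_\delta(t)\rho_0$ remains a probability density in $L^1(\Gamma_0^2,d\lambda)$; it gives no Ruelle-type bound on the associated correlation function $\widehat{\mathbb K}S_\delta(t)\rho_0$. Without that bound the intertwining identity cannot be justified in $\K_{\alpha,\beta}$, and your subsequent cone-closure argument has no object to act on. The finite-volume approximation $k_0^{(n)}=k_0\one_{\{\eta\subset\Lambda_n\}}$ does not help here: it puts the initial density $\rho_0^{(n)}$ into $L^1$, but the evolved density $S_\delta(t)\rho_0^{(n)}$ immediately spreads over all of $\Gamma_0^2$ and you are back to the same problem.

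The paper closes this gap by reversing the direction of the intertwining and introducing auxiliary weighted spaces $\mathcal B_{\alpha,\beta}\supset\Lb_{\alpha,\beta}$ and $\mathcal B_{\alpha,\beta}^*\subset\K_{\alpha,\beta}$, where the weight carries the extra factor $e_\lambda(R_\delta;\eta^+)e_\lambda(R_\delta;\eta^-)$. The same perturbation argument (Theorem \ref{PHDIPSTH:01}) produces a semigroup $U_\delta(t)^*$ on $\mathcal B_{\alpha,\beta}^*$, and one checks it agrees with $\widehat T_{\alpha,\beta}^\delta(t)^*$ there (Lemma \ref{GMCSLEMMA:01}). The point of $\mathcal B_{\alpha,\beta}^*$ is that it is \emph{small enough} for the inverse transform $\mathcal H u(\eta)=\int(-1)^{|\xi|}u(\eta\cup\xi)\,d\lambda(\xi)$ to land in $L^1(\Gamma_0^2,d\lambda)$; one then shows $R_t^\delta:=\mathcal H\,U_\delta(t)^*k_{0,\delta}$ is a weak $L^1$-solution of the density equation and identifies it with $S_\delta(t)R_0\ge0$ via Ball's uniqueness theorem (Lemma \ref{GMCSLEMMA:02}). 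The initial datum is correspondingly modified to $k_{0,\delta}(\eta)=k_0(\eta)e_\lambda(R_\delta;\eta^+)e_\lambda(R_\delta;\eta^-)\in\mathcal B_{\alpha',\beta'}^*$, which is still positive definite, and the limit $\delta\to0$ is handled exactly as you do. In short: your scheme goes $L^1\to\K_{\alpha,\beta}$ via $\widehat{\mathbb K}$ and gets stuck on the Ruelle bound; the paper goes $\mathcal B_{\alpha,\beta}^*\to L^1$ via $\mathcal H$ and uses uniqueness on the $L^1$ side, where no such bound is needed.
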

Existence of an associated Markov function is stated in the next corollary, cf. \cite{KKM08}.
\begin{Corollary}
 Suppose that (A)--(D) hold for any $\tau > 0$ and assume that \eqref{PHDIPS:02} holds.
 Then for any $\mu \in \mathcal{P}_{\alpha,\beta}$ there exists a Markov function $(X_t^{\mu})_{t \geq 0}$
 on the configuration space $\Gamma^2$ with the initial distribution $\mu$ associated with the generator $L$.
\end{Corollary}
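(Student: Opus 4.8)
The plan is to follow the one-component construction of \cite{KKM08} and build the process from the evolution of states supplied by Proposition \ref{PHDIPSTH:02}. Since (A)--(D) are imposed for every $\tau > 0$, the strong form of that proposition applies and gives $\widehat{T}_{\alpha,\beta}(t)^* \K_{\alpha,\beta}^+ \subset \K_{\alpha,\beta}^+$. Consequently each $\mu \in \mathcal{P}_{\alpha,\beta}$ admits a unique evolution of states $(\mu_t)_{t \geq 0} \subset \mathcal{P}_{\alpha,\beta}$ whose correlation functions satisfy $k_{\mu_t} = \widehat{T}_{\alpha,\beta}(t)^* k_\mu$, and I would first record that $U(t)\mu := \mu_t$ defines a semigroup on $\mathcal{P}_{\alpha,\beta}$: the identity $\widehat{T}_{\alpha,\beta}(t+s)^* = \widehat{T}_{\alpha,\beta}(t)^*\widehat{T}_{\alpha,\beta}(s)^*$ together with the bijection between states and positive-definite correlation functions from Theorem \ref{GMCSTH:05} forces $U(t+s) = U(t)U(s)$.

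Next I would produce the transition kernels. Since point masses $\delta_\gamma$ are singular with respect to the Poisson measure, they are not sub-Poissonian and cannot be evolved directly; instead I would dualize and work with the observable semigroup $\widehat{\mathbb{T}}(t) := \mathbb{K}\,\widehat{T}_{\alpha,\beta}(t)\,\mathbb{K}^{-1}$ on $\mathcal{FP}(\Gamma^2)$, which solves the backward equation \eqref{INTRO:00} and satisfies $\langle \widehat{\mathbb{T}}(t)F, \mu\rangle = \langle F, U(t)\mu\rangle$ for $\mu \in \mathcal{P}_{\alpha,\beta}$. Because $U(t)$ maps states to states, $\widehat{\mathbb{T}}(t)$ is positivity preserving, while the conservativity $\widehat{T}_{\alpha,\beta}(t)\one^* = \one^*$ (guaranteed by condition (D)) gives $\widehat{\mathbb{T}}(t)1 = 1$. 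A Riesz-type representation, obtained by approximating bounded measurable functions on $\Gamma^2$ by elements of $\mathcal{FP}(\Gamma^2)$, then yields probability kernels $p_t(\gamma,\cdot)$ with $(\widehat{\mathbb{T}}(t)F)(\gamma) = \int_{\Gamma^2} F\, dp_t(\gamma,\cdot)$; the semigroup property of $\widehat{\mathbb{T}}(t)$ translates into the Chapman--Kolmogorov equations, and the measurability of $\gamma \mapsto p_t(\gamma,A)$ follows from that of $\gamma \mapsto (\widehat{\mathbb{T}}(t)F)(\gamma)$.

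Finally, given the initial distribution $\mu$ and the transition function $p_t$, the finite-dimensional distributions $\mu(d\gamma_0)\,p_{t_1}(\gamma_0,d\gamma_1)\cdots$ are consistent, and Kolmogorov's extension theorem delivers a Markov function $(X_t^\mu)_{t\geq 0}$ on $\Gamma^2$ with initial law $\mu$, whose one-dimensional distributions are $\mu_t$ and whose generator is $L$. I expect the main obstacle to be the construction of the kernels $p_t(\gamma,\cdot)$ as genuine probability measures pointwise in $\gamma$ together with their Borel measurability: because $\delta_\gamma$ is not an admissible initial state, the kernels must be extracted from the observable semigroup rather than from the evolution of states, and it is precisely here that the full-$\tau$ positive-definiteness from Proposition \ref{PHDIPSTH:02} and the non-explosion condition (D), ensuring $p_t(\gamma,\Gamma^2)=1$, are indispensable.
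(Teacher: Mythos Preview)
The paper itself gives no proof of this corollary; it simply records the statement and points to \cite{KKM08} for the construction. Your outline is precisely the two-component analogue of the argument in \cite{KKM08}: exploit the full-$\tau$ version of Proposition~\ref{PHDIPSTH:02} to obtain a state-valued semigroup $U(t)$ on $\mathcal{P}_{\alpha,\beta}$, pass to an observable semigroup, and invoke Kolmogorov's extension theorem. So at the level of strategy you are aligned with what the paper defers to.

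There is, however, a genuine gap in the step where you extract pointwise transition kernels $p_t(\gamma,\cdot)$ via a ``Riesz-type representation'' of $\widehat{\mathbb{T}}(t)F$. The semigroup $\widehat{T}_{\alpha,\beta}(t)$ acts on the $L^1$-type space $\Lb_{\alpha,\beta}$, so $\widehat{T}_{\alpha,\beta}(t)\mathbb{K}^{-1}F$ is only an equivalence class, and $\widehat{\mathbb{T}}(t)F$ is defined merely $\mu$-almost everywhere for each $\mu\in\mathcal{P}_{\alpha,\beta}$, not pointwise on $\Gamma^2$. You therefore cannot fix $\gamma$ and apply a Riesz representation to the linear functional $F\mapsto(\widehat{\mathbb{T}}(t)F)(\gamma)$; nor does the positivity argument (``$\langle\widehat{\mathbb{T}}(t)F,\mu\rangle\geq 0$ for all $\mu$'') yield pointwise nonnegativity. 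This is exactly the obstruction you flag at the end, but your proposed resolution---approximating bounded measurable functions by $\mathcal{FP}(\Gamma^2)$ and reading off a kernel---does not overcome it. In \cite{KKM08} this difficulty is handled by building the finite-dimensional distributions of the process directly as measures on products $(\Gamma^2)^n$ from the state evolution and verifying Kolmogorov consistency, rather than by first producing genuine pointwise Markov kernels; the resulting object is a Markov \emph{function} (a process with the Markov property relative to its own filtration) rather than a Markov family indexed by starting points. A minor side remark: the identity $\widehat{T}_{\alpha,\beta}(t)\one^*=\one^*$ already comes from Theorem~\ref{PHDIPSTH:01}(a) under (A) and \eqref{PHDIPS:02}, not from condition~(D).
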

The rest of this section is devoted to the proof of Proposition \ref{PHDIPSTH:02}.
\begin{Lemma}\label{GMCSLEMMA:05}
 For any $\delta > 0$ Theorem \ref{PHDIPSTH:01} and Theorem \ref{GMCSTH:01} hold with $L$ replaced by $L_{\delta}$.
 Let $\widehat{T}^{\delta}_{\alpha,\beta}(t)$ be the semigroup on $\Lb_{\alpha,\beta}$.
 Then, for any $G \in \Lb_{\alpha,\beta}$
 \[
  \widehat{T}^{\delta}_{\alpha,\beta}(t)G \longrightarrow \widehat{T}_{\alpha,\beta}(t)G, \quad \delta \to 0
 \]
 holds uniformly on compacts in $t \geq 0$.
\end{Lemma}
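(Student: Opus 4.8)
The plan is to split the statement into two independent tasks: first, that the hypotheses of Theorem \ref{PHDIPSTH:01} and Theorem \ref{GMCSTH:01} are inherited by $L_\delta$, so that the semigroup $\widehat{T}^\delta_{\alpha,\beta}(t)$ and the corresponding solution theory exist; and second, the convergence $\widehat{T}^\delta_{\alpha,\beta}(t)G\to\widehat{T}_{\alpha,\beta}(t)G$, which I would obtain from the Trotter--Kato approximation theorem.

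For the first task I would exploit the elementary structure of the perturbation. Since $b^\pm_\delta(x,\eta)=R_\delta(x)b^\pm(x,\eta)$ with $0<R_\delta\le 1$ and the death rates are unchanged, condition (A) for $L_\delta$ is immediate from $b^\pm_\delta\le b^\pm$. The key observation is that the factor $R_\delta(x)$ does not depend on the configuration variable on which $\mathbb{K}_0^{-1}$ acts, hence it may be pulled out, giving $|\mathbb{K}_0^{-1}b^\pm_\delta(x,\cdot)|(\xi)=R_\delta(x)|\mathbb{K}_0^{-1}b^\pm(x,\cdot)|(\xi)\le|\mathbb{K}_0^{-1}b^\pm(x,\cdot)|(\xi)$. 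Comparing \eqref{EQ:02} for $L_\delta$ and $L$ term by term, and noting that $M$ is the same for both operators, I obtain $c(L_\delta,\alpha,\beta;\eta)\le c(L,\alpha,\beta;\eta)\le a(\alpha,\beta)M(\eta)$, so \eqref{PHDIPS:02} holds for $L_\delta$ with the same constant; the identical comparison shows that conditions (B) and (C), whenever assumed for $L$, also hold for $L_\delta$. Thus Theorem \ref{PHDIPSTH:01} and Theorem \ref{GMCSTH:01} apply verbatim to $L_\delta$ and produce the contraction semigroup $\widehat{T}^\delta_{\alpha,\beta}(t)$ on $\Lb_{\alpha,\beta}$.

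For the convergence I would rerun the argument of the preceding continuity theorem, now with $d^\pm$ left unchanged and $b^\pm$ replaced by $b^\pm_\delta$; concretely this is the difference-$c$ function $c_n$ of that theorem with $d^\pm_n=d^\pm$ and $b^\pm_n=b^\pm_\delta$. All the semigroups $\widehat{T}^\delta_{\alpha,\beta}(t)$ and $\widehat{T}_{\alpha,\beta}(t)$ are contractions, so the family is uniformly stable, and $B_{bs}(\Gamma_0^2)$ is a core for every generator by Theorem \ref{PHDIPSTH:01}; by Trotter--Kato it therefore suffices to prove $\widehat{L}_\delta G\to\widehat{L}G$ in $\Lb_{\alpha,\beta}$ for each $G\in B_{bs}(\Gamma_0^2)$. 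Because $\widehat{L}_\delta-\widehat{L}$ affects only the two birth terms, the combinatorial change of variables \eqref{PHD:01} (the same manipulation producing $B'$ and the function $c$) yields
\[
 \|\widehat{L}_\delta G-\widehat{L}G\|_{\Lb_{\alpha,\beta}}\le\int_{\Gamma_0^2}\tilde{c}_\delta(\alpha,\beta;\eta)|G(\eta)|e^{\alpha|\eta^+|}e^{\beta|\eta^-|}\,d\lambda(\eta),
\]
where $\tilde{c}_\delta$ is the birth part of $c(L,\alpha,\beta;\cdot)$ with $b^\pm$ replaced by $b^\pm-b^\pm_\delta=(1-R_\delta)b^\pm$.

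The heart of the matter is then a dominated convergence argument for this integral. Pulling $(1-R_\delta(x))$ out of $\mathbb{K}_0^{-1}$ as above, one sees on the one hand that $\tilde{c}_\delta(\eta)\le a(\alpha,\beta)M(\eta)$ uniformly in $\delta$, and on the other hand that $R_\delta(x)\nearrow 1$ forces $\tilde{c}_\delta(\eta)\to 0$ for every fixed $\eta$, since the sums over $\eta^+$ and $\eta^-$ are finite and each inner integral is $\delta$-independent and finite. For $G\in B_{bs}(\Gamma_0^2)$ one has $M\cdot G\in\Lb_{\alpha,\beta}$ (this is exactly the membership $B_{bs}(\Gamma_0^2)\subset D_{\alpha,\beta}(\widehat{L})$), so $a(\alpha,\beta)M(\eta)|G(\eta)|e^{\alpha|\eta^+|}e^{\beta|\eta^-|}$ is a $\delta$-independent integrable majorant supported on the bounded support of $G$. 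Dominated convergence gives $\widehat{L}_\delta G\to\widehat{L}G$ in $\Lb_{\alpha,\beta}$, and Trotter--Kato upgrades this to convergence of the semigroups uniformly on compact $t$-intervals. I expect the only delicate point to be the careful derivation of the displayed norm estimate through \eqref{PHD:01} and the verification that the majorant is genuinely $\delta$-independent and integrable.
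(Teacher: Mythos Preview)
Your proposal is correct and follows essentially the same route as the paper: verify that $R_\delta\le 1$ forces $c(L_\delta,\alpha,\beta;\eta)\le c(L,\alpha,\beta;\eta)$ with the same $M$, so Theorems \ref{PHDIPSTH:01} and \ref{GMCSTH:01} apply to $L_\delta$ on the common domain $D_{\alpha,\beta}(\widehat{L})$, and then use Trotter--Kato together with $\widehat{L}_\delta G\to\widehat{L}G$ obtained by dominated convergence. The only cosmetic difference is that the paper asserts the generator convergence on all of $D_{\alpha,\beta}(\widehat{L})$ while you argue on the core $B_{bs}(\Gamma_0^2)$; both are sufficient, and your explicit majorant $a(\alpha,\beta)M(\eta)|G(\eta)|e^{\alpha|\eta^+|}e^{\beta|\eta^-|}$ makes the ``not difficult to see'' step in the paper precise.
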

\begin{proof}
 The first claim follows by $R_{\delta}(x) \leq 1$ and an repetition of the arguments given in the proofs of Theorem \ref{PHDIPSTH:01} and Theorem \ref{GMCSTH:01}.
 In particular, such repetition shows that $\widehat{L}_{\delta} := \mathbb{K}_0^{-1}L_{\delta}\mathbb{K}_0$ is given by
 $\widehat{L}_{\delta} = A + B_{\delta}$, where $A$ is given as before and $B_{\delta}$ is obtained from $B$
 by multiplication of the terms for the birth by $R_{\delta}(x)$.
 In particular, it is the generator of an analytic semigroup $\widehat{T}_{\alpha,\beta}^{\delta}(t)$ of contractions
 on $\Lb_{\alpha,\beta}$. Note that this generator is considered on the same domain $D_{\alpha,\beta}(\widehat{L})$ as $\widehat{L}$.
 It is not difficult to see that $\widehat{L}_{\delta}G \longrightarrow \widehat{L}G$, as $\delta \to 0$ for all $G \in D_{\alpha,\beta}(\widehat{L})$.
 In view of Trotter-Kato approximation the assertion is proved.
\end{proof}

Let $\mathcal{B}_{\alpha,\beta}$ be the Banach space of all
equivalence classes of functions $G$ with norm
\[
 {\mid \parallel }
 G
{\mid \parallel }_
 {\mathcal{B}_{\alpha,\beta}} =
 \int _{\Gamma_0^2}|G(\eta)|e_{\lambda}(R_{\delta}; \eta^+)e_{\lambda}(R_{\delta};\eta^-)e^{\alpha|\eta^+
 |}e^{\beta|\eta^-|}
 \,d\lambda(\eta).
\]
Its dual Banach space $\mathcal{B}_{\alpha,\beta}^*$ can be identified with the Banach space of all equivalence classes of functions $k$ with norm
\[
 {\mid \parallel }
 k
  {\mid \parallel } _{\mathcal{B}_{\alpha,\beta}^*} = \esssup _{\eta \in \Gamma_0^2}\frac{|k(\eta)|}{e_{\lambda}(R_{\delta};\eta^+)e_{\lambda}(R_{\delta};\eta^-)e^{\alpha|\eta^+|}e^{\beta|\eta^-|}}.
\]
Here and in the following we let $\delta > 0$ be arbitrary but fixed. To omit cumbersome notation will not explicitly state
the dependence of $\mathcal{B}_{\alpha,\beta}, \mathcal{B}_{\alpha,\beta}^*$ on $\delta$.
\begin{Remark}
 Note that such spaces have the following two important properties.
 \begin{enumerate}
  \item[1.] The space $\mathcal{B}_{\alpha,\beta}$ is large in the sense that
  \[
   \bigcup _{\alpha'',\beta'' \in \mathbb{R}} \mathcal{K}_{\alpha'',\beta''} \subset \mathcal{B}_{\alpha,\beta}.
  \]
  \item[2.] The space $\mathcal{B}_{\alpha,\beta}^*$ is small in the sense that
  \[
   \mathcal{B}_{\alpha,\beta}^* \subset \bigcap _{\alpha'',\beta'' \in \mathbb{R}}\Lb_{\alpha'',\beta''}.
  \]
 \end{enumerate}
\end{Remark}
The same arguments as for the proof of Theorem \ref{PHDIPSTH:01}
and Theorem \ref{GMCSTH:01} show that we can replace $\Lb_{\alpha,\beta},
\K_{\alpha,\beta}$ also by $\mathcal{B}_{\alpha,\beta}$ and
$\mathcal{B}_{\alpha,\beta}^*$. Note that it requires to introduce another function $c_{\delta}(\alpha,\beta;\eta)$
which is an analogue of $c(\alpha,\beta;\eta)$.
A simple computation shows that it is given by $c(\alpha,\beta;\eta)$ with $e^{\alpha|\xi^+|}e^{\beta|\xi^-|}$ replaced by
$e_{\lambda}(R_{\delta};\xi^+)e_{\lambda}(R_{\delta};\xi^-)e^{\alpha|\xi^+|}e^{\beta|\xi^-|}$.
At this point it is necessary to use the additional factor $R_{\delta}$ in $b_{\delta}^{\pm}$.
Now $c_{\delta}(\alpha,\beta;\eta) \leq c(\alpha,\beta;\eta)$ shows that the same arguments as for the proof of Theorem \ref{PHDIPSTH:01}
and Theorem \ref{GMCSTH:01} can be applied. Denote by $U_{\delta}(t)$ and
$U_{\delta}(t)^*$ the corresponding semigroups on
$\mathcal{B}_{\alpha,\beta}$ and $\mathcal{B}_{\alpha,\beta}^*$,
respectively. Let $(\widehat{L}_{\delta},
D^{\mathcal{B}}_{\alpha,\beta}(\widehat{L}))$ be the generator of
$U_{\delta}(t)$. The proofs of Theorem \ref{PHDIPSTH:01} and
\ref{GMCSTH:01} show that
\[
 D^{\mathcal{B}}_{\alpha,\beta}(\widehat{L}) = \{ G \in \mathcal{B}_{\alpha,\beta}\ | \ M \cdot G \in \mathcal{B}_{\alpha,\beta}\}.
\]
Thus the Cauchy problem
\begin{align}\label{GMCS:28}
 \frac{d }{dt}\langle G, u^{\delta}_t \rangle = \langle \widehat{L}_{\delta}G, u^{\delta}_t \rangle,
 \quad  u^{\delta}_t|_{t=0} = u_0, \quad \forall G \in B_{bs}(\Gamma_0^2)
\end{align}
has for every $u_0 \in \mathcal{B}_{\alpha,\beta}^*$ a unique weak solution in $\mathcal{B}_{\alpha,\beta}^*$ given by $U_{\delta}(t)^*u_0$.
\begin{Lemma}\label{GMCSLEMMA:01}
 Let $k_0 \in \mathcal{B}_{\alpha,\beta}^*$, then $\widehat{T}_{\alpha,\beta}^{\delta}(t)^*k_0 = U_{\delta}(t)^*k_0$ holds.
\end{Lemma}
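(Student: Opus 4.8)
The plan is to identify both $\widehat{T}_{\alpha,\beta}^{\delta}(t)^{*}k_{0}$ and $U_{\delta}(t)^{*}k_{0}$ as weak solutions that are continuous in the respective weak-$*$ sense and that satisfy the \emph{same} Cauchy problem with generator $\widehat{L}_{\delta}$, and then to invoke the uniqueness already established in $\K_{\alpha,\beta}$. The two ingredients that make this possible are the embeddings recorded in the Remark above: since $e_{\lambda}(R_{\delta};\cdot)\leq 1$ one has the norm-decreasing inclusion $\mathcal{B}_{\alpha,\beta}^{*}\hookrightarrow \K_{\alpha,\beta}$ together with $\Lb_{\alpha,\beta}\subset \mathcal{B}_{\alpha,\beta}$, and both dualities are realised by the same pairing $\langle G,k\rangle=\int_{\Gamma_{0}^{2}}G(\eta)k(\eta)\,d\lambda(\eta)$. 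In particular, for $G\in B_{bs}(\Gamma_{0}^{2})$ and $k\in\mathcal{B}_{\alpha,\beta}^{*}$ the numbers $\langle G,k\rangle$ and $\langle \widehat{L}_{\delta}G,k\rangle$ are unambiguous and agree whether computed in the $\Lb/\K$ or the $\mathcal{B}/\mathcal{B}^{*}$ duality.

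First I would record that $U_{\delta}(t)^{*}$ is a contraction on $\mathcal{B}_{\alpha,\beta}^{*}$, being the adjoint of the contraction semigroup $U_{\delta}(t)$ constructed above, so that for $k_{0}\in\mathcal{B}_{\alpha,\beta}^{*}$ the curve $u_{t}:=U_{\delta}(t)^{*}k_{0}$ satisfies $\VVert u_{t}\VVert_{\mathcal{B}_{\alpha,\beta}^{*}}\leq\VVert k_{0}\VVert_{\mathcal{B}_{\alpha,\beta}^{*}}$. Passing through the norm-decreasing embedding $\mathcal{B}_{\alpha,\beta}^{*}\hookrightarrow\K_{\alpha,\beta}$ yields $\sup_{t\in[0,T]}\Vert u_{t}\Vert_{\K_{\alpha,\beta}}<\infty$ for every $T>0$, so $u_{t}$ meets the norm-boundedness hypothesis needed for uniqueness in $\K_{\alpha,\beta}$. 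Next I would use that, by construction, $u_{t}$ solves the weak equation \eqref{GMCS:28},
\[
 \langle G,u_{t}\rangle=\langle G,k_{0}\rangle+\int_{0}^{t}\langle\widehat{L}_{\delta}G,u_{s}\rangle\,ds,\qquad G\in B_{bs}(\Gamma_{0}^{2}),
\]
which, thanks to the coincidence of the pairings noted above, is exactly the weak equation \eqref{GMCS:23} (with $L$ replaced by $L_{\delta}$) characterising $\widehat{T}_{\alpha,\beta}^{\delta}(t)^{*}k_{0}$.

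It then remains to verify that $u_{t}$ lies in the uniqueness class for the $\K_{\alpha,\beta}$ problem, that is, that it is continuous with respect to $\sigma(\K_{\alpha,\beta},\Lb_{\alpha,\beta})$. This I would obtain from the weak-$*$ continuity of $U_{\delta}(t)^{*}$ on $\mathcal{B}_{\alpha,\beta}^{*}$: writing $\langle G,u_{t}\rangle=\langle U_{\delta}(t)G,k_{0}\rangle$ and using that $t\mapsto U_{\delta}(t)G$ is norm-continuous in $\mathcal{B}_{\alpha,\beta}$, one sees that for each fixed $G\in\Lb_{\alpha,\beta}\subset\mathcal{B}_{\alpha,\beta}$ the map $t\longmapsto\langle G,u_{t}\rangle$ is continuous, which is precisely $\sigma(\K_{\alpha,\beta},\Lb_{\alpha,\beta})$-continuity of $t\mapsto u_{t}$. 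Combined with the uniform norm bound from the previous step, \cite[Lemma~1.10]{WZ06} upgrades this to continuity in the topology $\mathcal{C}$, so $u_{t}$ is an admissible solution of \eqref{GMCS:23} for $L_{\delta}$. By the uniqueness part of Theorem \ref{GMCSTH:01} (valid for $L_{\delta}$ in view of Lemma \ref{GMCSLEMMA:05}), this solution must coincide with $\widehat{T}_{\alpha,\beta}^{\delta}(t)^{*}k_{0}$, which is the desired identity.

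The main obstacle is the careful transfer between the two functional settings: one must check that the two duality pairings genuinely agree on the relevant elements and, above all, that the weak-$*$ continuity available for $U_{\delta}(t)^{*}$ in $\mathcal{B}_{\alpha,\beta}^{*}$ descends to the coarser $\sigma(\K_{\alpha,\beta},\Lb_{\alpha,\beta})$-continuity required to enter the uniqueness class in $\K_{\alpha,\beta}$. Once the embeddings $\Lb_{\alpha,\beta}\subset\mathcal{B}_{\alpha,\beta}$ and $\mathcal{B}_{\alpha,\beta}^{*}\hookrightarrow\K_{\alpha,\beta}$ are exploited, both points become routine, and the contraction property supplies the uniform bound automatically.
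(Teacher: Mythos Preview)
Your argument is correct and follows essentially the same approach as the paper: use the embeddings $\mathcal{B}_{\alpha,\beta}^{*}\hookrightarrow\K_{\alpha,\beta}$ and $\Lb_{\alpha,\beta}\subset\mathcal{B}_{\alpha,\beta}$ to recognise $U_{\delta}(t)^{*}k_{0}$ as a weak solution of \eqref{GMCS:23} for $L_{\delta}$ in $\K_{\alpha,\beta}$, and then invoke the uniqueness in Theorem~\ref{GMCSTH:01}. Your version is in fact more explicit than the paper's, which simply notes the domain inclusion $D_{\alpha,\beta}(\widehat{L})\subset D^{\mathcal{B}}_{\alpha,\beta}(\widehat{L})$ and jumps to uniqueness without spelling out the $\sigma(\K_{\alpha,\beta},\Lb_{\alpha,\beta})$-continuity and the upgrade to $\mathcal{C}$-continuity via \cite[Lemma~1.10]{WZ06}; you fill in precisely that step.
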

\begin{proof}
 First observe that $\mathcal{B}_{\alpha,\beta}^* \subset \K_{\alpha,\beta}$ continuously and hence $k_0 \in \K_{\alpha,\beta}$.
 In particular $u_t^{\delta} := U_{\delta}(t)^*k_0$ and $k_t^{\delta} := \widehat{T}_{\alpha,\beta}^{\delta}(t)^*k_0$ are well-defined.
 Moreover, since $\Lb_{\alpha,\beta}$ is continuously embedded into $\mathcal{B}_{\alpha,\beta}$ we obtain $D_{\alpha,\beta}(\widehat{L}) \subset D^{\mathcal{B}}_{\alpha,\beta}(\widehat{L})$,
 i.e. $(\widehat{L}_{\delta}, D^{\mathcal{B}}_{\alpha,\beta}(\widehat{L}))$ is an extension of $(\widehat{L}_{\delta}, D_{\alpha,\beta}(\widehat{L}))$.
 Therefore $(u^{\delta}_t)_{t \geq 0}$ is also a weak solution to \eqref{GMCS:23} and thus by uniqueness $u^{\delta}_t = k^{\delta}_t$, $t \geq 0$.
\end{proof}

\begin{Lemma}\label{GMCSLEMMA:02}
 Let $k_0 \in \mathcal{B}_{\alpha',\beta'}^*$ be positive definite. Denote by
 $u^{\delta}_t \in \mathcal{B}_{\alpha,\beta}^*$ the unique weak solution to \eqref{GMCS:28}, then $u^{\delta}_t$ is positive definite for any $t \geq 0$.
\end{Lemma}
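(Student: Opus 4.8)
The plan is to represent the solution $u^{\delta}_t=U_{\delta}(t)^*k_0$ as the correlation function of a \emph{positive} measure, exploiting that the localized generator $L_{\delta}$ admits, thanks to condition (D), a genuine (stochastic) statistical dynamics on densities which manifestly preserves positivity. The underlying mechanism is the following positivity fact: if $\rho\geq 0$ is a density on $\Gamma_0^2$, then its correlation function
\[
 k_{\rho}(\eta) := \int_{\Gamma_0^2} \rho(\eta \cup \xi)\, d\lambda(\xi)
\]
is positive definite, since by \eqref{PHD:01} one has $\langle G, k_{\rho}\rangle = \int_{\Gamma_0^2}(\mathbb{K}_0 G)(\eta)\rho(\eta)\, d\lambda(\eta)$, which is nonnegative for every $G \in B_{bs}^+(\Gamma_0^2)$ because then $\mathbb{K}_0 G \geq 0$. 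Hence it suffices to write $u^{\delta}_t$ in the form $k_{\rho^{\delta}_t}$ with $\rho^{\delta}_t \geq 0$.

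First I would establish the intertwining identity $U_{\delta}(t)^* k_{\rho_0} = k_{S_{\delta}(t)\rho_0}$ for a rich class of positive densities $\rho_0$. Setting $\rho^{\delta}_t := S_{\delta}(t)\rho_0$ (the solution of \eqref{GMCS:31}, which is $\geq 0$ as $S_{\delta}$ preserves positivity), I would show that $t \mapsto k_{\rho^{\delta}_t}$ is a weak solution of \eqref{GMCS:28} with datum $k_{\rho_0}$, and then invoke uniqueness of weak solutions in $\mathcal{B}^*_{\alpha,\beta}$ (established before Lemma \ref{GMCSLEMMA:01}). Concretely, for $G \in B_{bs}(\Gamma_0^2)$ the pairing identity above gives $\langle G, k_{\rho^{\delta}_t}\rangle = \int (\mathbb{K}_0 G)\rho^{\delta}_t\, d\lambda$; differentiating in $t$, using that $\rho^{\delta}_t$ solves \eqref{GMCS:31}, the adjoint relation $\mathcal{J}_{\delta}=L_{\delta}$ from Lemma \ref{GMCSLEMMA:03}, and the conjugation $L_{\delta}\mathbb{K}_0 G = \mathbb{K}_0 \widehat{L}_{\delta} G$, I obtain
\[
 \frac{d}{dt}\langle G, k_{\rho^{\delta}_t}\rangle
 = \int_{\Gamma_0^2}(L_{\delta}\mathbb{K}_0 G)(\eta)\,\rho^{\delta}_t(\eta)\, d\lambda(\eta)
 = \int_{\Gamma_0^2}(\mathbb{K}_0 \widehat{L}_{\delta} G)(\eta)\,\rho^{\delta}_t(\eta)\, d\lambda(\eta)
 = \langle \widehat{L}_{\delta} G, k_{\rho^{\delta}_t}\rangle,
\]
which is exactly \eqref{GMCS:28}. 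Condition (D) is essential here: it guarantees $\overline{\mathcal{I}}_{\delta}=\mathcal{G}_{\delta}$ and hence that $S_{\delta}$ is stochastic, so no mass is lost and the weak equation holds with the full operator $\widehat{L}_{\delta}$ rather than a proper restriction of it.

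It then remains to approximate an arbitrary positive definite $k_0 \in \mathcal{B}^*_{\alpha',\beta'}$ by correlation functions of positive densities. Since $\mathcal{B}^*_{\alpha',\beta'}\subset \K_{\alpha',\beta'}$, after rescaling we may assume $k_0(\emptyset)=1$, so by Theorem \ref{GMCSTH:05} there is $\mu_0 \in \mathcal{P}_{\alpha',\beta'}$ with $k_{\mu_0}=k_0$. Localizing to an increasing sequence of compacts $\Lambda_N \uparrow \R^d$, the projected measures $\mu_0^{(N)} := \mu_0\circ p_{\Lambda_N,\Lambda_N}^{-1}$ are supported on $\Gamma_0^2$, are absolutely continuous with positive densities $\rho_0^{(N)}$, and have correlation functions $k_0^{(N)} = \one_{\{\eta \subset \Lambda_N\}}\, k_0$, which converge to $k_0$ pointwise with $|k_0^{(N)}| \leq |k_0|$. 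By dominated convergence $k_0^{(N)} \to k_0$ in $\sigma(\mathcal{B}^*_{\alpha,\beta}, \mathcal{B}_{\alpha,\beta})$, and by the previous step each $U_{\delta}(t)^* k_0^{(N)} = k_{S_{\delta}(t)\rho_0^{(N)}}$ is positive definite. Since $U_{\delta}(t)^*$ is weak-$*$ continuous and the cone of positive definite functions is weak-$*$ closed (the test functions $B_{bs}^+(\Gamma_0^2)$ lie in $\mathcal{B}_{\alpha,\beta}$), the limit $u^{\delta}_t = U_{\delta}(t)^* k_0$ is positive definite as well.

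The main obstacle is the rigorous justification of the intertwining computation. The function $\mathbb{K}_0 G$ is only polynomially bounded, so it does not lie in $L^{\infty}(\Gamma_0^2, d\lambda)$ and in particular not in $D(\mathcal{J}_{\delta})$; thus Lemma \ref{GMCSLEMMA:03} cannot be applied verbatim. One has to truncate $\mathbb{K}_0 G$ to bounded functions, apply the adjoint relation there, and pass to the limit, which requires moment bounds on $\rho^{\delta}_t$ ensuring that the pairings converge and that $k_{\rho^{\delta}_t}$ indeed belongs to $\mathcal{B}^*_{\alpha,\beta}$. These bounds stem from the Ruelle-type bound encoded in $k_0 \in \mathcal{B}^*_{\alpha',\beta'}$ together with the invariance of the corresponding weighted $L^1$-space under $S_{\delta}(t)$, which is precisely the $\mathcal{B}$-scale analogue of the contraction property obtained in Theorem \ref{PHDIPSTH:01}.
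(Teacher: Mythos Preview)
Your core idea—link the correlation-function evolution to a positivity-preserving density evolution via the $\mathbb{K}_0$-transform and invoke condition (D)—is exactly the paper's idea, but you run it in the opposite direction, and that reversal is what creates the obstacles you list at the end.

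The paper goes from correlation functions to densities. It defines the inverse correlation transform $\mathcal{H}u(\eta):=\int_{\Gamma_0^2}(-1)^{|\xi|}u(\eta\cup\xi)\,d\lambda(\xi)$, checks that $\mathcal{H}:\mathcal{B}^*_{\alpha,\beta}\to L^1(\Gamma_0^2,d\lambda)$ is bounded (the $R_\delta$-weights in $\mathcal{B}^*_{\alpha,\beta}$ make the integral converge), sets $R^\delta_t:=\mathcal{H}u^\delta_t$, and shows that $(R^\delta_t)_{t\ge 0}$ is a norm-continuous weak $L^1$-solution of the density equation. Ball's uniqueness theorem then gives $R^\delta_t=S_\delta(t)R_0\geq 0$, whence $\langle G,u^\delta_t\rangle=\langle\mathbb{K}_0 G,R^\delta_t\rangle\geq 0$ for $G\in B^+_{bs}(\Gamma_0^2)$. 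This sidesteps both your approximation by localized data and the moment-propagation issue: the bound $u^\delta_t\in\mathcal{B}^*_{\alpha,\beta}$ is known \emph{a priori} from the construction of $U_\delta(t)^*$, and continuity of $\mathcal{H}$ carries it over to $R^\delta_t\in L^1$; one never has to show that $S_\delta(t)$ preserves a weighted $L^1$-space.

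The test-function mismatch you correctly flag is also handled by reversing the direction. Instead of truncating $\mathbb{K}_0 G$ for $G\in B_{bs}(\Gamma_0^2)$, the paper observes that condition~(B) gives $\mathcal{K}_{\log 2,\log 2}\subset D^{\mathcal{B}}_{\alpha,\beta}(\widehat{L}_\delta)$, while for any $F\in D(\mathcal{J}_\delta)\subset L^\infty$ one has $|\mathbb{K}_0^{-1}F(\eta)|\leq\|F\|_\infty 2^{|\eta|}$, so $\mathbb{K}_0^{-1}F\in\mathcal{K}_{\log 2,\log 2}$. Thus every $F\in D(\mathcal{J}_\delta)$ equals $\mathbb{K}_0 G$ for some $G$ in the domain of $\widehat{L}_\delta$, and the weak density equation can be tested directly with such $F$ via Lemma~\ref{GMCSLEMMA:03}—no truncation or limit is needed.

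Your route is not wrong in spirit, but the step you defer (``invariance of the corresponding weighted $L^1$-space under $S_\delta(t)$'') is not established anywhere in the paper and is not an immediate corollary of Theorem~\ref{PHDIPSTH:01}; making it rigorous would essentially require redoing the perturbation argument on the density side. The paper's reversal of direction renders this unnecessary and also eliminates the approximation layer entirely.
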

\begin{proof}
 Define for any $u \in \mathcal{B}_{\alpha,\beta}^*$ a linear operator $\mathcal{H}u(\eta) := \int _{\Gamma_0^2}(-1)^{|\xi|}
 u(\eta \cup \xi)\,d\lambda(\xi)$.
 Then $\mathcal{H}u$ is well-defined and satisfies for any $C_+, C_- > 0$
 \begin{align*}
  & \int _{\Gamma_0^2} |\mathcal{H}u(\eta)|C_+^{|\eta^+|}C_-^{|\eta^-|}
  \,d\lambda(\eta)
  \\ &\leq
   {\mid \parallel }
  u
   {\mid \parallel }
  _{\mathcal{B}_{\alpha,\beta}^*} \int _{\Gamma_0^2}(1+C_+)^{|\eta^+|} (1+ C_-)^{|\eta^-|}e^{\alpha|\eta^+|}e^{\beta|\eta^-|}e_{\lambda}(R_{\delta};\eta^+)e_{\lambda}
  (R_{\delta};\eta^-)\,d\lambda(\eta),
 \end{align*}
 i.e. $\mathcal{H}: \mathcal{B}_{\alpha,\beta}^* \longrightarrow \Lb_{\log(C_+), \log(C_-)}$ is continuous.
 Let $G \in \mathcal{B}_{\alpha,\beta}$ be arbitrary, then for any $u \in \mathcal{B}_{\alpha,\beta}^*$ we get by Fubini's theorem and \eqref{PHD:01}
 that
 \begin{align}\label{GMCS:32}
  \langle \mathbb{K}_0G, \mathcal{H}u \rangle = \langle G, u\rangle
 \end{align}
 holds. We can apply Fubini's theorem and \eqref{PHD:01} since
 \begin{align*}
  &\ \int _{\Gamma_0^2}\int _{\Gamma_0^2}\int _{\Gamma_0^2}|G(\xi)||u(\eta
  \cup \xi \cup \zeta)|\,d \lambda(\zeta)d\lambda(\xi)d\lambda(\eta)
  \\ &\leq \Vert u \Vert_{\mathcal{B}_{\alpha,\beta}^*}e^{2 e^{\alpha}\langle R_{\delta}\rangle } e^{2 e^{\beta}
  \langle R_{\delta} \rangle} \int _{\Gamma_0^2}|G(\xi)|e^{\alpha|\xi^+|} e^{\beta|\xi^-|} e_{\lambda}
  (R_{\delta};\xi^+)e_{\lambda}(R_{\delta};\xi^-)\,d \lambda(\xi)
 \end{align*}
 is satisfied, where $\langle R_{\delta} \rangle := \int _{\R^d}R_{\delta}(x)\,d x$.
 For the same $u$ and $G \in D^{\mathcal{B}}_{\alpha,\beta}(\widehat{L})$ we obtain by \eqref{GMCS:32} and $\mathbb{K}_{0}\widehat{L}_{\delta}G = L_{\delta}\mathbb{K}_0G$
 \begin{align}\label{GMCS:33}
  \langle \widehat{L}_{\delta}G, u \rangle = \langle \mathbb{K}_0\widehat{L}_{\delta}G, \mathcal{H}u \rangle = \langle L_{\delta}\mathbb{K}_0G, \mathcal{H}u\rangle.
 \end{align}
 Now let $U_{\delta}(t)^*k_0 = u^{\delta}_t \in \mathcal{B}_{\alpha,\beta}^*$,
 then
 \begin{align*}
  \langle G,u^{\delta}_t\rangle = \langle G, u_0 \rangle + \int _{0}^{t}\langle \widehat{L}_{\delta}G,
  u^{\delta}_s \rangle \,d s, \quad G \in D_{\alpha,\beta}^{\mathcal{B}}(\widehat{L}).
 \end{align*}
 Observe that condition (B) implies $\mathcal{K}_{\log(2), \log(2)} \subset D_{\alpha,\beta}^{\mathcal{B}}(\widehat{L})$.
 Hence by \eqref{GMCS:32} and \eqref{GMCS:33} it follows
 for $R^{\delta}_t := \mathcal{H}u^{\delta}_t \in L^1(\Gamma_0^2, d\lambda)$, $t \geq 0$ that
 \[
  \langle \mathbb{K}_0G, R^{\delta}_t \rangle = \langle \mathbb{K}_0 G, R_0 \rangle +
  \int _{0}^{t}\langle L_{\delta}\mathbb{K}_0 G, R^{\delta}_s\rangle \,d s, \quad G \in \K_{\log(2), \log(2)}
 \]
 holds.
 For any $F \in D(\mathcal{J}_{\delta}) \subset L^{\infty}(\Gamma_0,\,d\lambda)$ we get $|\mathbb{K}_0^{-1}F(\eta)| \leq \Vert F \Vert_{L^{\infty}}2^{|\eta|}$
 and hence $D(\mathcal{J}_{\delta}) \subset \mathbb{K}_0\K_{\log(2), \log(2)}$.
 Thus we can find $G \in \K_{\log(2),\log(2)}$ such that $\mathbb{K}_0G = F \in D(\mathcal{J}_{\delta})$. Lemma \ref{GMCSLEMMA:03} therefore implies
 \[
  \langle F, R^{\delta}_t \rangle = \langle F, R_0 \rangle +
  \int _{0}^{t}\langle \mathcal{J}_{\delta}F, R^{\delta}_s \rangle \,d s, \quad F \in D(\mathcal{J}_{\delta}).
 \]
 Since $k_0 \in \mathcal{B}_{\alpha',\beta'}^*$ we get by Theorem \ref{GMCSTH:01}.1 that $u_t^{\delta}$ is continuous in $t \geq 0$
 w.r.t. the norm in $\mathcal{B}_{\alpha,\beta}^*$.
 Because $\mathcal{H}: \mathcal{B}_{\alpha,\beta}^* \longrightarrow L^1(\Gamma_0^2, d\lambda)$ is continuous, $R_t^{\delta} = \mathcal{H}u_t^{\delta}$ is continuous w.r.t.
 $t \geq 0$ on $L^1(\Gamma_0^2, d\lambda)$. Hence $(R_t^{\delta})_{t \geq 0}$ is a weak solution to \eqref{GMCS:31}.
 The main result from \cite{BALL77} therefore implies $R_t^{\delta} = S_{\delta}(t)R_0 \geq 0$.
 Finally, for any $G \in B_{bs}^+(\Gamma_0^2)$ we get
 \[
  \langle G, u_t^{\delta}\rangle = \langle \mathbb{K}_0G, R_t^{\delta}\rangle \geq 0, \quad  t \geq 0.
 \]
\end{proof}

We are now prepared to complete the proof of positive definiteness.
\begin{proof}(Proposition \ref{PHDIPSTH:02})
Let $\mu_0 \in \mathcal{P}_{\alpha',\beta'}$ with correlation function $k_0 \in \K_{\alpha',\beta'}$. Define
\[
 k_{0,\delta}(\eta) := k_0(\eta)e_{\lambda}(R_{\delta};\eta^+)e_{\lambda}(R_{\delta};\eta^-), \quad \delta > 0, \quad \eta \in \Gamma_0^2,
\]
then $k_{0,\delta} \in \mathcal{B}_{\alpha',\beta'}^*$ and it is positive definite, cf. \cite{F11,F11TC}. By Lemma \ref{GMCSLEMMA:01} we get
$\widehat{T}_{\alpha,\beta}^{\delta}(t)^*k_{0,\delta} = U_{\delta}(t)^*k_{0,\delta} \in \mathcal{B}_{\alpha,\beta}^*$ and by Lemma \ref{GMCSLEMMA:02} the latter expression is
positive definite. Let $G \in B_{bs}^+(\Gamma_0^2)$. Then it suffices to show that
\[
 \langle G, \widehat{T}_{\alpha,\beta}^{\delta}(t)^* k_{0,\delta}\rangle \longrightarrow \langle G, \widehat{T}_{\alpha,\beta}(t)^*k_0 \rangle, \quad \delta \to 0.
\]
To this end observe that
\[
 \langle G, \widehat{T}_{\alpha,\beta}^{\delta}(t)^*k_{0,\delta}\rangle = \langle \widehat{T}_{\alpha,\beta}^{\delta}(t)^*G - \widehat{T}_{\alpha,\beta}(t)G, k_{0,\delta}\rangle + \langle \widehat{T}_{\alpha,\beta}(t)G, k_{0,\delta}\rangle.
\]
The first term can be estimated by
\[
 \Vert \widehat{T}_{\alpha,\beta}^{\delta}(t)G - \widehat{T}_{\alpha,\beta}(t)G\Vert_{\Lb_{\alpha,\beta}}\Vert k_0 \Vert_{\K_{\alpha,\beta}}
\]
and hence tends by Lemma \ref{GMCSLEMMA:05} to zero. The second term tends by dominated convergence to $\langle \widehat{T}_{\alpha,\beta}(t)G, k_0\rangle = \langle G, \widehat{T}_{\alpha,\beta}(t)^*k_0\rangle$,
which implies that $\widehat{T}_{\alpha,\beta}(t)^*k_0$ is positive definite.

If conditions (B) and (C) hold for all $\tau > 0$, then $k_{0, \delta}(\eta) := e^{-\delta|\eta|} k_0(\eta)$ belongs to
$\K_{\alpha - \delta, \beta - \delta}$ for any $\delta > 0$. Consequently, above considerations imply that $\widehat{T}_{\alpha,\beta}(t)^* k_{0,\delta} \in \K_{\alpha,\beta}$
is positive definite. Taking the limit $\delta \to 0$ yields the assertion.
\end{proof}

\begin{Remark}
 Suppose instead of (B) the following to be satisfied: There exist $C > 0$, $N \in \N$ and $\nu_b \geq 0$, $\nu_1, \nu_2 \geq 0$ such that for all $x \in \R^d$ and $\eta \in \Gamma_0^2$:
 \begin{align*}
  b^+(x,\eta) + b^-(x,\eta) &\leq C(1 + |\eta|)^N e^{\nu_b|\eta|},
  \\ d^+(x,\eta) &\leq C(1+|\eta|)^N e^{\nu_1|\eta|},
  \\ d^-(x,\eta) &\leq C (1 + |\eta|)^N e^{\nu_2 |\eta|}.
 \end{align*}
 Then for any positive definite $k_0 \in \K_{\alpha',\beta'}$ the evolution $\widehat{T}_{\alpha,\beta}(t)^*k_0$ is positive definite,
 provided (C) holds for $\alpha' + \nu_1 < \alpha$, $\beta' + \nu_2 < \beta$.
\end{Remark}

\section{Vlasov scaling}

\subsection*{General description of Vlasov scaling}
Let us briefly explain the Vlasov scaling in the two-component case. A motivation and additional explanations can be found in \cite{FKK10}.
Let $L$ be a Markov (pre-)generator on $\Gamma^2$, the aim is to find a scaling $L_n$ such that the following scheme holds.
Let $T_{n}^{\Delta}(t) = e^{tL_{n}^{\Delta}}$ be the (heuristic) representation of the scaled evolution of correlation functions, see \eqref{PHDIPS:11}.
The particular choice of $L \to L_{n}$ should preserve the order of singularity, that is the limit
\begin{align}\label{MCSL:11}
 n^{-|\eta|}T_{n}^{\Delta}(t)n^{|\eta|}k \longrightarrow T_V^{\Delta}(t)k, \quad  n \to 0
\end{align}
should exist and the evolution $T_V^{\Delta}(t)$ should preserve Lebesgue-Poisson exponentials,
i.e. if $r_0(\eta) = e_{\lambda}(\rho_0^{-}, \eta^-)e_{\lambda}(\rho_0^{+};\eta^+)$, then
$T_V^{\Delta}(t)r_0(\eta) = e_{\lambda}(\rho_t^{-},\eta^-)e_{\lambda}(\rho_t^{+};\eta^+)$.
In such a case $\rho_t^{-}, \rho_t^{+}$ satisfy the system of non-linear integro-differential equations
\begin{align}\label{MCSL:06}
 \frac{\partial \rho_t^{-}}{\partial t} = v_-(\rho_t^{-}, \rho_t^{+}), \quad \frac{\partial \rho_t^{+}}{\partial t} = v_+(\rho_t^{-},\rho_t^{+}).
\end{align}
The functionals $v_-, v_+$ can be computed explicitly for a large class of models and \eqref{MCSL:06} is the system of kinetic equations
for the densities $\rho_t^{-}, \rho_t^{+}$ for the particle system. Instead of investigating the limit \eqref{MCSL:11}, we define renormalized operators
$L_{n,\mathrm{ren}}^{\Delta} := n^{-|\eta|} L_{n}^{\Delta}n^{|\eta|}$ and study the behaviour of the semigroups $T_{n,\mathrm{ren}}^{\Delta}(t)$ when $n \to \infty$.
In such a case one can compute a limiting operator
\begin{align}\label{MCSL:12}
 L_{n,\mathrm{ren}}^{\Delta}\longrightarrow L_V^{\Delta}
\end{align}
and show that $L_V^{\Delta}$ is associated to a semigroup $T_V^{\Delta}(t)$. The limit \eqref{MCSL:11} is then obtained by showing the convergence
\begin{align}\label{MCSL:13}
 T_{n,\mathrm{ren}}^{\Delta}(t) \longrightarrow T_V^{\Delta}(t)
\end{align}
in a proper sense.

\subsection*{Scaling of two-component model}
Consider the scaled intensities $d^+_{n}, d^-_{n}, b^+_{n}, b^-_{n} \geq 0$ and suppose they satisfy condition (A) for any $n \in \N$.
Let $L_{n} = L^{-}_{n} + L^{+}_{n}$ where
\begin{align*}
 L^{-}_{n}F(\gamma) &= \sum _{x \in \gamma^-}d^-_{n}(x,\gamma^+, \gamma^- \backslash x)(F(\gamma^+, \gamma^- \backslash x) - F(\gamma^+, \gamma^-))
 \\ & \ \ \ + n \int _{\R^d}b^-_{n}(x,\gamma^+,\gamma^-)(F(\gamma^+, \gamma^- \cup x) - F(\gamma^+, \gamma^-))\,d x
\end{align*}
and
\begin{align*}
 L_{n}^{+}(t)F(\gamma) &= \sum _{x \in \gamma^+}d_{n}^+(x,\gamma^+
 \backslash x, \gamma^-)(F(\gamma^+ \backslash x, \gamma^-) - F(\gamma^+, \gamma^-))
 \\ & \ \ \ + n \int _{\R^d}b_{n}^+(x,\gamma^+,\gamma^-)(F(\gamma^+ \cup x, \gamma^-) - F(\gamma^+, \gamma^-))\,dx.
\end{align*}
Introduce
\begin{align*}
 c_{n}(\alpha,\beta; \eta) :=
  &+ \sum _{x \in \eta^-}\int _{\Gamma_0^2}|\mathbb{K}_0^{-1}d_{n}^-(x,\cdot \cup \eta^+,
  \cdot \cup \eta^- \backslash x)|(\xi)n^{|\xi|}e^{\alpha|\xi^+|}e^{\beta|\xi^-|}\,d\lambda(\xi)
  \\ &\ \sum _{x \in \eta^+}\int _{\Gamma_0^2}|\mathbb{K}_0^{-1}d_{n}^+(x,\cdot \cup \eta^+
  \backslash x,\cdot \cup \eta^-)|(\xi)n^{|\xi|}e^{\alpha|\xi^+|}e^{\beta|\xi^-|}\,d\lambda(\xi)
  \\ &+ e^{-\beta} \sum _{x \in \eta^-} \int _{\Gamma_0}|\mathbb{K}_0^{-1}b_{n}^-
  (x,\cdot\cup \eta^+,\cdot \cup \eta^- \backslash x)|(\xi) n^{|\xi|}e^{\alpha|\xi^+|}e^{\beta|\xi^-|} \,d\lambda(\xi)
  \\ &+ e^{-\alpha}\sum _{x \in \eta^+} \int _{\Gamma_0}|\mathbb{K}_0^{-1}b_{n}^+(x,\cdot \cup \eta^+
  \backslash x,\cdot\cup \eta^-)|(\xi)n^{|\xi|} e^{\alpha|\xi^+|}e^{\beta|\xi^-|}\,d\lambda(\xi)
\end{align*}
and $M_n(\eta) := \sum _{x \in \eta^-}d_n^-(x,\eta^+, \eta^- \backslash x) + \sum _{x \in \eta^+}d_n^+(x,\eta^+\backslash x, \eta^-)$.
We will suppose the following conditions to be satisfied:
\begin{enumerate}
 \item[(V1)] There exists $a(\alpha,\beta) \in (0,2)$ such that for all $\eta \in \Gamma_0^2$ and $n \in \N$
 \[
  c_n(\alpha,\beta;\eta) \leq a(\alpha,\beta)M_n(\eta)
 \]
 is satisfied.
 \item[(V2)] For all $\xi \in \Gamma_0^2$ and $x \in \R^d$ the following limits exist in $\Lb_{\alpha,\beta}$ and are independent of $\xi$
 \begin{align*}
  \lim _{n \to \infty} n^{|\cdot|} (\mathbb{K}_0^{-1}d_{n}^-(x,\cdot \cup \xi)) &= \lim _{n \to \infty} n^{|\cdot|}(\mathbb{K}_0^{-1}d_{n}^{-}(x,\cdot)) =:
  D_x^{V,-},
 \\ \lim _{n \to \infty} n^{|\cdot|} (\mathbb{K}_0^{-1}d_{n}^+(x,\cdot \cup \xi)) &= \lim _{n \to \infty} n^{|\cdot|}(\mathbb{K}_0^{-1}d_{n}^{+}(x,\cdot)) =:
 D_x^{V,+},
 \\ \lim _{n \to \infty} n^{|\cdot|} (\mathbb{K}_0^{-1}b_{n}^-(x,\cdot \cup \xi)) &= \lim _{n \to \infty} n^{|\cdot|}(\mathbb{K}_0^{-1}b_{n}^{-}(x,\cdot)) =:
 B_x^{V,-},
 \\ \lim _{n \to \infty} n^{|\cdot|} (\mathbb{K}_0^{-1}b_{n}^+(x,\cdot \cup \xi)) &= \lim _{n \to \infty} n^{|\cdot|}(\mathbb{K}_0^{-1}b_{n}^{+}(x,\cdot)) =: B_x^{V,+}.
 \end{align*}
 \item[(V3)] Let $M_V(\eta) := \sum _{x \in \eta^+}D_x^+(\emptyset) + \sum _{x \in \eta^-}D_{x}^{-}(\emptyset)$, then there exists $\sigma > 0$ such that either
 \[
  M_n(\eta) \leq \sigma M_V(\eta), \quad \eta \in \Gamma_0^2, \quad  n \in \N
 \]
 or
 \[
  M_n(\eta) \leq \sigma M_V(\eta), \quad \eta \in \Gamma_0^2, \quad n \in \N
 \]
 are satisfied.
\end{enumerate}
Note that here and below the constants $\alpha,\beta$ are fixed.
For simplicity of notation, we omit, whenever it is possible, the dependence on these constants.
Define $\widehat{L}_n := \mathbb{K}_0^{-1}L_n \mathbb{K}_0$ and
the renormalized operators $\widehat{L}_{n,\mathrm{ren}} :=
R_{n}\widehat{L}_{n}R_{n^{-1}}$, where $R_{\kappa}G(\eta) =
\kappa^{|\eta|}G(\eta)$. Then for any $G \in B_{bs}(\Gamma_0^2)$
and $k \in \K_{\alpha,\beta}$ the relation $\langle
\widehat{L}_{n, \mathrm{ren}}G, k \rangle = \langle G,
L^{\Delta}_{n, \mathrm{ren}}k \rangle$ holds.

\subsection*{Statements}
The next statement provides the existence and uniqueness of an evolution of quasi-observables and correlation functions for any fixed $n \in \N$.
\begin{Theorem}\label{MCSLTH:00}
 Suppose that condition (V1) is satisfied. Then for any fixed $n \in \N$ the following assertions are true:
\begin{enumerate}
  \item The closure of $(\widehat{L}_{n,\mathrm{ren}}, B_{bs}(\Gamma_0^2))$ is given by $(\widehat{L}_{n,\mathrm{ren}}, D(\widehat{L}_{n,\mathrm{ren}}))$,
  where
  \[
   D(\widehat{L}_{n,\mathrm{ren}}) = \{ G \in \Lb_{\alpha,\beta}\ | \ M_{n}\cdot G \in \Lb_{\alpha,\beta} \}.
  \]
  It is the generator of an analytic semigroup $(\widehat{T}_{n,\mathrm{ren}}(s))_{s \geq 0}$ of contractions on $\Lb_{\alpha,\beta}$.
  \item Let $\widehat{T}_{n, \mathrm{ren}}(t)^*$ be the adjoint semigroup with $(L^{\Delta}_{n, \mathrm{ren}}, D(L^{\Delta}_{n, \mathrm{ren}}))$
  considered on the (maximal) domain
  \[
   D(L^{\Delta}_{n,\mathrm{ren}}) = \{ k \in \K_{\alpha,\beta} \ | \ L^{\Delta}_{n, \mathrm{ren}}k \in \K_{\alpha,\beta}\}.
  \]
  For any $n \in \N$ and $k_0 \in \K_{\alpha,\beta}$, there exists a unique weak solution to
  \[
   \frac{\partial }{\partial t}\langle G, k_{t,n}\rangle = \langle \widehat{L}_{n,\mathrm{ren}}G, k_{t,n}\rangle,
   \quad  k_{t,n}|_{t=0} = k_0, \quad  G \in B_{bs}(\Gamma_0^2)
  \]
  given by $k_{t,n} = \widehat{T}_{n, \mathrm{ren}}(t)^*k_0$.
 \end{enumerate}
\end{Theorem}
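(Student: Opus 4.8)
The plan is to recognize that, for each fixed $n$, the renormalized operator $\widehat{L}_{n,\mathrm{ren}} = R_n \widehat{L}_n R_{n^{-1}}$ has exactly the same structural form as the operator $\widehat{L}$ treated in Theorem \ref{PHDIPSTH:01}, with the cumulative death intensity $M$ replaced by $M_n$ and the defining function $c(\alpha,\beta;\eta)$ replaced by $c_n(\alpha,\beta;\eta)$. Once this identification is made, condition (V1) is literally the hypothesis \eqref{PHDIPS:02} for $\widehat{L}_{n,\mathrm{ren}}$, and both assertions follow by transcribing Theorem \ref{PHDIPSTH:01} and the existence-uniqueness part of Theorem \ref{GMCSTH:01}. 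The standing assumption that each $L_n$ satisfies (A) guarantees that all the operators below are well-defined on $B_{bs}(\Gamma_0^2)$.

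First I would compute $\widehat{L}_{n,\mathrm{ren}}$ explicitly by tracking the powers of $n$ picked up under conjugation. Since $R_\kappa$ acts diagonally via $R_\kappa G(\eta) = \kappa^{|\eta|}G(\eta)$, conjugation leaves the multiplication part invariant: writing $\widehat{L}_n = A_n + B_n$ with $A_n G(\eta) = -M_n(\eta)G(\eta)$, one has $R_n A_n R_{n^{-1}} = A_n$. In every death term of $B_n$ the factor $R_n R_{n^{-1}}$ contributes $n^{|\eta| - |\xi|} = n^{|\eta\backslash\xi|}$ to the kernel $(\mathbb{K}_0^{-1}d_n^{\pm})(\eta\backslash\xi)$. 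In every birth term the same conjugation produces only $n^{|\eta|-|\xi|-1}$, because of the extra birth particle, and this deficit is compensated exactly by the explicit prefactor $n$ standing in front of the birth integrals in the definition of $L_n$, again yielding $n^{|\eta\backslash\xi|}$. Hence $\widehat{L}_{n,\mathrm{ren}} = A_n + B_{n,\mathrm{ren}}$, where $B_{n,\mathrm{ren}}$ arises from $B$ by the substitutions $d^{\pm}\mapsto d_n^{\pm}$, $b^{\pm}\mapsto b_n^{\pm}$ together with the insertion of the weight $n^{|\eta\backslash\xi|}$ into each kernel.

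Next I would verify the integral identity driving the construction. Defining the positive dominating operator $B_{n,\mathrm{ren}}'$ with $|B_{n,\mathrm{ren}}G| \leq B_{n,\mathrm{ren}}'|G|$ exactly as $B'$ was built from $B$ in the proof of Theorem \ref{PHDIPSTH:01}, an application of \eqref{PHD:01} and the relabeling $\eta\backslash\xi \mapsto \xi$ of the integration variable give, for $0 \leq G \in D(\widehat{L}_{n,\mathrm{ren}})$,
\begin{align*}
 &\int_{\Gamma_0^2} B_{n,\mathrm{ren}}'G(\eta)\, e^{\alpha|\eta^+|}e^{\beta|\eta^-|}\,d\lambda(\eta)
 \\ &\qquad = \int_{\Gamma_0^2}\big(c_n(\alpha,\beta;\eta) - M_n(\eta)\big)G(\eta)\, e^{\alpha|\eta^+|}e^{\beta|\eta^-|}\,d\lambda(\eta),
\end{align*}
where the weight $n^{|\xi|}$ appearing inside the definition of $c_n(\alpha,\beta;\eta)$ is precisely the factor $n^{|\eta\backslash\xi|}$ produced by the renormalization in the previous step. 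Fixing $r \in (0,1)$ with $a(\alpha,\beta) < 1+r < 2$, as permitted by (V1), the bound $c_n \leq a(\alpha,\beta)M_n$ yields $\int_{\Gamma_0^2}(A_n + \tfrac1r B_{n,\mathrm{ren}}')G\, e^{\alpha|\eta^+|}e^{\beta|\eta^-|}\,d\lambda \leq 0$. From here the proof of Theorem \ref{PHDIPSTH:01} applies word for word: $(A_n, D(\widehat{L}_{n,\mathrm{ren}}))$ generates the positive analytic contraction semigroup $(e^{-tM_n})_{t\geq 0}$, the perturbation results of \cite{TV06} and \cite{AR91} promote $(A_n + B_{n,\mathrm{ren}}, D(\widehat{L}_{n,\mathrm{ren}}))$ to the generator of an analytic contraction semigroup, and the dominated-convergence core argument shows $B_{bs}(\Gamma_0^2)$ is a core with the stated domain, proving (1). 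For (2) I would invoke the duality $\langle \widehat{L}_{n,\mathrm{ren}}G, k\rangle = \langle G, L^{\Delta}_{n,\mathrm{ren}}k\rangle$ recorded in the setup; since the existence-uniqueness part of Theorem \ref{GMCSTH:01} rests only on analyticity, contractivity and the fact that $B_{bs}(\Gamma_0^2)$ is a core (via \cite{WZ06}), it transfers unchanged and gives the unique weak solution $k_{t,n} = \widehat{T}_{n,\mathrm{ren}}(t)^*k_0$.

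I expect the main obstacle to be the bookkeeping in the second step: one must confirm that conjugation by $R_n, R_{n^{-1}}$ attaches the power $n^{|\eta\backslash\xi|}$ to \emph{every} death and birth kernel, and in particular that the explicit factor $n$ multiplying the birth integrals in $L_n$ is exactly what compensates the deficit $n^{-1}$ coming from the extra birth particle, so that the weighted integral of $B_{n,\mathrm{ren}}'$ reproduces the function $c_n(\alpha,\beta;\eta)$ as defined. Everything after this identification is a verbatim transcription of the earlier proofs with $(M, c)$ replaced by $(M_n, c_n)$.
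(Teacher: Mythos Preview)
Your proposal is correct and follows exactly the approach the paper takes: the paper's own proof consists of the single remark that the case $n=1$ is covered by Theorem~\ref{GMCSTH:01} and that ``following the arguments there, it is not difficult to adopt the proofs to this case.'' Your bookkeeping of the $n^{|\eta\backslash\xi|}$ factors and the identification of $c_n(\alpha,\beta;\eta)$ as the analogue of $c(\alpha,\beta;\eta)$ is precisely the adaptation the paper has in mind, spelled out in more detail than the paper itself provides.
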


The case $n=1$ is covered by the results obtained in Theorem \ref{GMCSTH:01}.
Following the arguments there, it is not difficult to adopt the proofs to this case.
In the next step we construct the limiting dynamics when $n \to \infty$. Condition (V2) suggests to consider the limit
$\widehat{L}_{n,\mathrm{ren}}G \longrightarrow \widehat{L}_{V}G$, as $n \to \infty$.
The operator $\widehat{L}_V := A_V + B_V$ is given by $A_VG(\eta) = - M_V(\eta)G(\eta)$, where
\begin{align*}
 M_V(\eta) = &\ \sum _{x \in \eta^+}D_x^{V,+}(\emptyset) + \sum _{x \in
 \eta^-}D_x^{V,-}(\emptyset),
 \\ B_VG(\eta) = &- \sum _{\bfrac{\xi^+ \subsetneq \eta^+}{\xi^- \subsetneq \eta^-}}G(\xi)\sum _{x \in \xi^+}D_x^{V,+}(\eta \backslash \xi)
 - \sum _{\bfrac{\xi^+ \subsetneq \eta^+}{\xi^- \subsetneq \eta^-}}G(\xi)\sum _{x \in \xi^-}D_x^{V,-}(\eta \backslash \xi)
 \\ &+ \sum _{\xi \subset \eta}\int _{\R^d}G(\xi^+ \cup x, \xi^-)B_x^{V,+}(\eta \backslash \xi)\,dx
 + \sum _{\xi \subset \eta}\int _{\R^d}G(\xi^+,\xi^- \cup x)B_x^{V,-}(\eta \backslash \xi)\,dx.
\end{align*}
Let $D(\widehat{L}_V) := \{ G \in \Lb_{\alpha,\beta}\ | \ M_V\cdot G \in \Lb_{\alpha,\beta}\}$, define
\begin{align*}
 & c_V(\alpha, \beta;\eta) :=
 \\ &+ \sum _{x \in \eta^+}\int _{\Gamma_0^2}|D_x^{V,+}(\xi)|e^{\alpha|\xi^+|}e^{\beta|\xi^-|}\,d\lambda(\xi)
     + e^{-\alpha}\sum _{x \in \eta^+}\int _{\Gamma_0^2}|B_x^{V,+}(\xi)|e^{\alpha|\xi^+|}
     e^{\beta|\xi^-|}\,d\lambda(\xi)
 \\ &+ \sum _{x \in \eta^-}\int _{\Gamma_0^2}|D_x^{V,-}(\xi)|e^{\alpha|\xi^+|}e^{\beta|\xi^-|}\,d\lambda(\xi)
     + e^{-\beta}\sum _{x \in \eta^-}\int _{\Gamma_0^2}|B_x^{V,-}(\xi)|e^{\alpha|\xi^+|}e^{\beta|\xi^-|}\,d\lambda(\xi)
\end{align*}
and finally
\begin{align*}
 & (L_V^{\Delta}k)(\eta) :=
 \\ &- \sum _{x \in \eta^+}\int _{\Gamma_0^2}k(\eta \cup \xi)D_x^{V,+}(\xi)\,d\lambda(\xi)
     + \sum _{x \in \eta^+}\int _{\Gamma_0^2}k(\eta^+ \backslash x \cup \xi^+, \eta^+ \cup \xi^+)B_x^{V,+}
     (\xi)\,d\lambda(\xi)
 \\ &- \sum _{x \in \eta^-}\int _{\Gamma_0^2}k(\eta \cup \xi)D_x^{V,-}(\xi)\,d\lambda(\xi)
     + \sum _{x \in \eta^-}\int _{\Gamma_0^2}k(\eta^+ \cup \xi^+, \eta^-\backslash x\cup \xi^-)B_x^{V,-}
     (\xi)\,d\lambda(\xi).
\end{align*}
\begin{Theorem}\label{MCSLTH:01}
 Assume that conditions (V1), (V2) are satisfied. Then the following assertions are true:
 \begin{enumerate}
  \item The operator $(\widehat{L}_V, D(\widehat{L}_V))$ is the generator of an analytic semigroup $(\widehat{T}^V(t))_{t \geq 0}$ of contractions
  on $\Lb_{\alpha,\beta}$.
  \item Let $(\widehat{T}^V(t)^*)_{t \geq 0}$ be the adjoint semigroup on $\K_{\alpha,\beta}$, then for any $r_0 \in \K_{\alpha,\beta}$
  there exists a unique solution $r_t = \widehat{T}^V(t)^*r_0$ to the Cauchy problem
  \begin{align}\label{MCSL:08}
   \frac{\partial }{\partial t}\langle G, r_{t}\rangle = \langle \widehat{L}_VG, r_t \rangle, \quad r_t|_{t=0} = r_0, \quad G \in B_{bs}(\Gamma_0^2).
  \end{align}
  \item  Let $r_0(\eta) = \prod _{x \in \eta^+}\rho_0^{+}(x) \prod _{x \in \eta^-}\rho_0^{-}(x)$ and $\rho_0^{+},\rho_0^{-} \in L^{\infty}(\R^d)$ with
  $\Vert \rho_0^{+} \Vert_{L^{\infty}} \leq e^{\alpha}$, $\Vert \rho_0^{-} \Vert_{L^{\infty}} \leq e^{\beta}$.
  Assume that $(\rho_t^{+}, \rho_t^{-})$ is a classical solution to
  \begin{align*}
   \frac{\partial \rho_t^{-}}{\partial t}(x) = &- \int _{\Gamma_0^2}e_{\lambda}(\rho_t^{+};\xi^+)e_{\lambda}
   (\rho_t^{-};\xi^-)D_x^{V,-}(\xi)\,d\lambda(\xi)\rho_t^{-}(x)
   \\ &+ \int
   _{\Gamma_0^2}e_{\lambda}(\rho_t^{+};\xi^+)e_{\lambda}(\rho_t^{-};\xi^-)B_x^{V,-}(\xi)\,d\lambda(\xi),
   \\ \frac{\partial \rho_t^{+}}{\partial t}(x) = &- \int _{\Gamma_0^2}e_{\lambda}(\rho_t^{+};\xi^+)e_{\lambda}(\rho_t^{-};\xi^-)D_x^{V,+}(\xi)\,d\lambda(\xi)\rho_t^{+}(x)
   \\ &+ \int _{\Gamma_0^2}e_{\lambda}(\rho_t^{+};\xi^+)e_{\lambda}(\rho_t^{-};\xi^-)B_x^{V,+}(\xi)\,d\lambda(\xi)
  \end{align*}
  with initial conditions $\rho_t^{+}|_{t=0} = \rho_0^{+}$, $\rho_t^{-}|_{t=0} = \rho_0^{-}$ and
  \[
   \Vert \rho_t^{+} \Vert_{L^{\infty}} \leq e^{\alpha} \quad \Vert \rho_t^{-}\Vert_{L^{\infty}} \leq e^{\beta}.
  \]
 Then $r_t(\eta) := \prod _{x \in \eta^+}\rho_t^{+}(x) \prod _{x \in \eta^-}\rho_t^{-}(x)$
  is a weak solution to \eqref{MCSL:08} in $\K_{\alpha,\beta}$.
 \end{enumerate}
\end{Theorem}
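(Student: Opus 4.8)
The plan is to dispose of assertions (1) and (2) by importing the machinery already built for the fixed-$n$ dynamics, and to reserve the genuine work for the Lebesgue--Poisson computation in (3).

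\textbf{Assertions (1) and (2).} First I would verify the analogue of \eqref{PHDIPS:02} for the limiting coefficients, namely $c_V(\alpha,\beta;\eta) \leq a(\alpha,\beta) M_V(\eta)$ for all $\eta \in \Gamma_0^2$. This is obtained by letting $n \to \infty$ in (V1): each summand of $c_n(\alpha,\beta;\eta)$ is an $\Lb_{\alpha,\beta}$-norm of $n^{|\cdot|}\mathbb{K}_0^{-1}d_n^{\pm}(x,\cdot\cup\eta)$ (resp.\ of $b_n^{\pm}$), so by (V2) and continuity of the norm it converges to the corresponding summand of $c_V(\alpha,\beta;\eta)$, the independence of the limit on the background configuration being exactly what removes the $\eta$-dependence inside the integrals; likewise $M_n(\eta)\to M_V(\eta)$, since writing $d_n^{\pm}(x,\cdot)=\mathbb{K}_0\mathbb{K}_0^{-1}d_n^{\pm}(x,\cdot)$ only the $\xi=\emptyset$ term of $\mathbb{K}_0^{-1}$ survives, the higher-order terms carrying a factor $n^{-|\xi|}$. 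Passing to the limit in $c_n\leq a M_n$ then gives the bound. Once this holds, (1) is proved as in Theorem \ref{PHDIPSTH:01}(a): $A_V=-M_V$ generates a positive analytic semigroup, the positive operator $B_V'$ obtained from $B_V$ by inserting absolute values satisfies $\int_{\Gamma_0^2}(A_V+\frac{1}{r}B_V')G(\eta)\,e^{\alpha|\eta^+|}e^{\beta|\eta^-|}\,d\lambda(\eta)\leq 0$ for a suitable $r\in(0,1)$, and the perturbation results \cite{TV06, AR91} yield the analytic contraction semigroup $\widehat{T}^V(t)$ with core $B_{bs}(\Gamma_0^2)$. For (2), the duality $\langle \widehat{L}_V G, k\rangle=\langle G, L_V^{\Delta}k\rangle$ (established as in Lemma \ref{GMCSLEMMA:00} via \eqref{PHD:01}) identifies $r_t=\widehat{T}^V(t)^*r_0$ as the unique weak solution to \eqref{MCSL:08} by the adjoint-semigroup theory of \cite{WZ06}.

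\textbf{Assertion (3).} The heart is a pointwise identity $\partial_t r_t = L_V^{\Delta} r_t$. The $L^\infty$-bounds give $|r_t(\eta)|=e_{\lambda}(\rho_t^{+};\eta^+)e_{\lambda}(\rho_t^{-};\eta^-)\leq e^{\alpha|\eta^+|}e^{\beta|\eta^-|}$, so $r_t\in\K_{\alpha,\beta}$ with $\|r_t\|_{\K_{\alpha,\beta}}\leq 1$. Inserting the exponential ansatz $k=r_t$ into $L_V^{\Delta}$ and using that $e_{\lambda}(\rho;\cdot)$ factorises over disjoint unions, each death integral yields the prefactor $r_t(\eta)$ times $\int_{\Gamma_0^2}e_{\lambda}(\rho_t^{+};\xi^+)e_{\lambda}(\rho_t^{-};\xi^-)D_x^{V,\pm}(\xi)\,d\lambda(\xi)$, while each birth integral yields $e_{\lambda}(\rho_t^{+};\eta^+\backslash x)e_{\lambda}(\rho_t^{-};\eta^-)$ (resp.\ with $\pm$ exchanged) times the corresponding integral against $B_x^{V,\pm}$. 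All these integrals are finite, since the $L^\infty$-bounds dominate $e_{\lambda}(\rho_t^{\pm};\cdot)$ by $e^{\alpha|\cdot|}e^{\beta|\cdot|}$ and the resulting expressions are precisely the summands of the finite quantity $c_V(\alpha,\beta;\eta)$.

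On the other hand, the product rule applied to $r_t(\eta)=\prod_{x\in\eta^+}\rho_t^{+}(x)\prod_{x\in\eta^-}\rho_t^{-}(x)$ gives
\[
 \frac{\partial r_t}{\partial t}(\eta)=\sum_{x\in\eta^+}\frac{\partial\rho_t^{+}}{\partial t}(x)\prod_{y\in\eta^+\backslash x}\rho_t^{+}(y)\,e_{\lambda}(\rho_t^{-};\eta^-)+\sum_{x\in\eta^-}\frac{\partial\rho_t^{-}}{\partial t}(x)\,e_{\lambda}(\rho_t^{+};\eta^+)\prod_{y\in\eta^-\backslash x}\rho_t^{-}(y).
\]
Substituting the kinetic equations for $\partial_t\rho_t^{\pm}$ and noting $\rho_t^{+}(x)\prod_{y\in\eta^+\backslash x}\rho_t^{+}(y)=e_{\lambda}(\rho_t^{+};\eta^+)$ turns the $x$-death contribution into $-r_t(\eta)\int e_{\lambda}(\rho_t^{+};\xi^+)e_{\lambda}(\rho_t^{-};\xi^-)D_x^{V,+}(\xi)\,d\lambda(\xi)$ and the $x$-birth contribution into $e_{\lambda}(\rho_t^{+};\eta^+\backslash x)e_{\lambda}(\rho_t^{-};\eta^-)\int e_{\lambda}(\rho_t^{+};\xi^+)e_{\lambda}(\rho_t^{-};\xi^-)B_x^{V,+}(\xi)\,d\lambda(\xi)$, i.e.\ exactly the terms produced above (and symmetrically for $\eta^-$). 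Hence $\partial_t r_t=L_V^{\Delta}r_t$ pointwise on $\Gamma_0^2$. To pass to the weak form, I fix $G\in B_{bs}(\Gamma_0^2)$; as $G$ is supported on a bounded set and $\partial_s r_s(\eta)$ is bounded there uniformly for $s$ in compact intervals (by the $L^\infty$-bounds and the growth of $D_x^{V,\pm},B_x^{V,\pm}$), Fubini's theorem gives
\[
 \langle G,r_t\rangle-\langle G,r_0\rangle=\int_{\Gamma_0^2}G(\eta)\int_0^t\frac{\partial r_s}{\partial s}(\eta)\,ds\,d\lambda(\eta)=\int_0^t\langle G,L_V^{\Delta}r_s\rangle\,ds=\int_0^t\langle \widehat{L}_V G,r_s\rangle\,ds,
\]
the last equality being the duality from (2); this is the integral form of \eqref{MCSL:08}.

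The step I expect to be most delicate is the limit transition establishing $c_V\leq a M_V$: upgrading the $\Lb_{\alpha,\beta}$-convergence in (V2) to convergence of the individual scalar summands of $c_n$, and in particular to the pointwise convergence $M_n(\eta)\to M_V(\eta)$, requires controlling the higher-order $\mathbb{K}_0^{-1}$-terms, which are killed only by the explicit factor $n^{-|\xi|}$. The computation in (3) is conceptually the core of the statement but is a direct, if bookkeeping-heavy, calculation; its only analytic subtlety is the uniform domination needed to interchange $\int_0^t$ with $\langle G,\cdot\rangle$.
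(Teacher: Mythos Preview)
Your approach matches the paper's: establish $c_V(\alpha,\beta;\eta)\leq a(\alpha,\beta)M_V(\eta)$ from (V1)--(V2), run the perturbation argument of Theorem~\ref{PHDIPSTH:01} for (1), invoke the adjoint machinery of \cite{WZ06} for (2), and for (3) combine the pointwise identity $\partial_t r_t=L_V^{\Delta}r_t$ with a continuity argument. Two remarks.

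For (3) you omit the verification that $t\mapsto r_t$ is continuous in the topology $\mathcal{C}$, which is part of what ``weak solution'' means here (cf.\ Theorem~\ref{GMCSTH:01}). The paper in fact singles this out as the only nontrivial point in (3), deferring the pointwise computation you spell out to \cite{FKK10}. You already have the ingredients: $\|r_t\|_{\K_{\alpha,\beta}}\leq 1$ and $t\mapsto r_t(\eta)$ is continuous for each $\eta$ (since $\rho_t^{\pm}$ is a classical solution); dominated convergence then gives continuity of $t\mapsto\langle G,r_t\rangle$ for every $G\in\Lb_{\alpha,\beta}$, i.e.\ $\sigma(\K_{\alpha,\beta},\Lb_{\alpha,\beta})$-continuity, and together with the uniform norm bound this upgrades to $\mathcal{C}$-continuity by \cite[Lemma~1.10]{WZ06}. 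This should be stated.

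Your justification of $M_n(\eta)\to M_V(\eta)$ is not quite rigorous as written: (V2) gives only $\Lb_{\alpha,\beta}$-convergence, which does not control pointwise values at a generic $\zeta\subset\eta$, so the argument ``the higher-order terms carry a factor $n^{-|\zeta|}$'' is incomplete. The clean route is to observe that $\{\emptyset\}$ is an atom of $\lambda$ of mass $1$, so $\Lb_{\alpha,\beta}$-convergence \emph{does} imply convergence of the value at $\emptyset$; applying (V2) with background configuration $\xi=(\eta^+,\eta^-\backslash x)$ (resp.\ $(\eta^+\backslash x,\eta^-)$) and evaluating at $\emptyset$ gives $d_n^{-}(x,\eta^+,\eta^-\backslash x)\to D_x^{V,-}(\emptyset)$ and $d_n^{+}(x,\eta^+\backslash x,\eta^-)\to D_x^{V,+}(\emptyset)$ directly.
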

\begin{proof}
 By conditions (V1) and (V2) it follows that $c_V(\alpha,\beta;\eta)  \leq a(\alpha,\beta)M_V(\eta)$ holds.
 Define a positive operator $B'_V$ on $D(\widehat{L}_V)$ by
 \begin{align*}
 B_V'G(\eta) = & \sum _{\bfrac{\xi^+ \subsetneq \eta^+}{\xi^- \subsetneq \eta^-}}G(\xi)\sum _{x \in \xi^+}|D_x^{V,+}(\eta \backslash \xi)|
 + \sum _{\bfrac{\xi^+ \subsetneq \eta^+}{\xi^- \subsetneq \eta^-}}G(\xi)\sum _{x \in \xi^-}|D_x^{V,-}(\eta \backslash \xi)|
 \\ &+ \sum _{\xi \subset \eta}\int _{\R^d}G(\xi^+ \cup x, \xi^-)|B_x^{V,+}(\eta \backslash \xi)|\,dx\\
 &+ \sum _{\xi \subset \eta}\int _{\R^d}G(\xi^+,\xi^- \cup x)|B_x^{V,-}(\eta \backslash \xi)|\,dx.
 \end{align*}
 Then it is not difficult to see that for any $0 \leq G \in D(\widehat{L}_V)$
 \begin{align*}
  \int _{\Gamma_0^2}B_V'G(\eta) e^{\alpha|\eta^+|}e^{\beta|\eta^-|}\,d\lambda(\eta)
  \leq (a(\alpha,\beta) - 1)\int _{\Gamma_0^2}M_V(\eta)G(\eta)e^{\alpha|\eta^+|}e^{\beta|\eta^-|}\,d\lambda(\eta)
 \end{align*}
 is fulfilled.
 The same arguments as in the proof of Theorem \ref{MCSLTH:00} yield existence, analyticity and the contraction property of the semigroup $\widehat{T}^V(t)$.
 For the last assertion we only show that $r_t$ is continuous w.r.t. $\mathcal{C}$. The other assertions are simple computations, see e.g. \cite{FKK10}.
 First observe that by $|r_t(\eta)| \leq e^{\alpha|\eta^+|}e^{\beta|\eta^-|}$ the function $r_t$ is norm-bounded and hence it suffices to show that it is continuous
 w.r.t. $\sigma(\mathcal{K}_{\alpha,\beta}, \Lb_{\alpha,\beta})$. But this function is continuous in $t \geq 0$ for any $\eta$ and hence the assertion follows
 by dominated convergence.
\end{proof}
\begin{Theorem}\label{MCSLTH:02}
 Suppose that conditions (V1)--(V3) are fulfilled. Then
 $\widehat{T}_{n, \mathrm{ren}}(t) \longrightarrow \widehat{T}^V(t)$ holds strongly in $\Lb_{\alpha,\beta}$ and uniformly on compacts in $t \geq 0$.
\end{Theorem}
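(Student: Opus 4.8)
The plan is to invoke the Trotter--Kato approximation theorem, exactly as in the approximation argument following the proof of Theorem~\ref{PHDIPSTH:01}. All the semigroups are already in hand: by Theorem~\ref{MCSLTH:00} each $(\widehat{L}_{n,\mathrm{ren}}, D(\widehat{L}_{n,\mathrm{ren}}))$ generates a contraction semigroup $\widehat{T}_{n,\mathrm{ren}}(t)$ on $\Lb_{\alpha,\beta}$, and by Theorem~\ref{MCSLTH:01} the operator $(\widehat{L}_V, D(\widehat{L}_V))$ generates the contraction semigroup $\widehat{T}^V(t)$. Since $B_{bs}(\Gamma_0^2)$ is a core for $\widehat{L}_V$ (by the same argument showing it is a core in Theorem~\ref{PHDIPSTH:01}) and clearly $B_{bs}(\Gamma_0^2) \subset D(\widehat{L}_{n,\mathrm{ren}})$ for every $n$ (as $M_n\cdot G \in \Lb_{\alpha,\beta}$ whenever $G$ has bounded support), it remains only to establish the convergence of generators
\[
 \widehat{L}_{n,\mathrm{ren}}G \longrightarrow \widehat{L}_V G \quad \text{in } \Lb_{\alpha,\beta}, \qquad n \to \infty,
\]
for each fixed $G \in B_{bs}(\Gamma_0^2)$; Trotter--Kato then delivers strong convergence, uniform on compact $t$-intervals.

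To prove the generator convergence I would split $\widehat{L}_{n,\mathrm{ren}} = A_n + B_{n,\mathrm{ren}}$ and $\widehat{L}_V = A_V + B_V$, where $A_nG = -M_n G$ and $A_VG = -M_V G$ are the diagonal parts (the renormalization $R_n(\cdot)R_{n^{-1}}$ acts trivially on multiplication operators). For the diagonal part one first checks the pointwise limit $M_n(\eta) \to M_V(\eta)$: writing $d_n^{\pm}(x,\cdot) = \mathbb{K}_0(\mathbb{K}_0^{-1}d_n^{\pm}(x,\cdot))$ and using $(\mathbb{K}_0^{-1}d_n^{\pm}(x,\cdot))(\zeta) = n^{-|\zeta|}\bigl(n^{|\zeta|}(\mathbb{K}_0^{-1}d_n^{\pm}(x,\cdot))(\zeta)\bigr)$, condition (V2) keeps the bracket bounded while forcing every contribution with $\zeta \neq \emptyset$ to vanish like $n^{-|\zeta|}$, leaving only the $\zeta = \emptyset$ term $D_x^{V,\pm}(\emptyset)$ (the value at the atom $\emptyset$ is controlled by the $\Lb_{\alpha,\beta}$-convergence in (V2)); summing over $x$ yields $M_n(\eta)\to M_V(\eta)$. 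The domination $M_n \leq \sigma M_V$ from (V3) then makes $|(A_n - A_V)G| = |M_n - M_V|\,|G| \leq (\sigma+1)M_V|G| \in \Lb_{\alpha,\beta}$ for $G \in B_{bs}$, and dominated convergence gives $\Vert (A_n - A_V)G\Vert_{\Lb_{\alpha,\beta}} \to 0$.

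For the off-diagonal part I would exploit the Lebesgue--Poisson coproduct formula \eqref{PHD:01} to reduce the norm of $B_{n,\mathrm{ren}}G - B_V G$ to the norms appearing in (V2). After renormalization each kernel in $B_{n,\mathrm{ren}}$ carries the common factor $n^{|\eta\backslash\xi|}$ (the explicit prefactor $n$ in the birth part of $L_n$ precisely compensates the extra created particle, so all four terms acquire this same weight); for instance the death$^-$ contribution is $\sum_{\xi\subsetneq\eta}G(\xi)\sum_{x\in\xi^-}\kappa_n^-(x,\xi;\eta\backslash\xi)$ with $\kappa_n^-(x,\xi;\cdot) := n^{|\cdot|}(\mathbb{K}_0^{-1}d_n^-(x,\cdot\cup\xi^+,\cdot\cup\xi^-\backslash x))$. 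Applying the triangle inequality and \eqref{PHD:01} bounds its norm by
\[
 \int_{\Gamma_0^2}|G(\xi)|e^{\alpha|\xi^+|}e^{\beta|\xi^-|}\sum_{x\in\xi^-}\bigl\Vert \kappa_n^-(x,\xi;\cdot) - D_x^{V,-}\bigr\Vert_{\Lb_{\alpha,\beta}}\,d\lambda(\xi),
\]
and analogously for the remaining three terms. By (V2) the inner norm tends to $0$ for every $(x,\xi)$ (the limit being independent of $\xi$, which is exactly what matches the structure of $B_V$), while (V1) together with (V3) bounds these norms uniformly in $n$: the relevant integrals are precisely the summands of $c_n(\alpha,\beta;\xi) \leq a(\alpha,\beta)M_n(\xi) \leq a(\alpha,\beta)\sigma M_V(\xi)$. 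Since $G \in B_{bs}(\Gamma_0^2)$ the $\xi$-integration runs over a bounded set with bounded integrand, so dominated convergence yields $\Vert B_{n,\mathrm{ren}}G - B_VG\Vert_{\Lb_{\alpha,\beta}} \to 0$.

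I expect the main obstacle to be precisely this last step: organising the four renormalized kernels so that \eqref{PHD:01} cleanly factorises the $\Lb_{\alpha,\beta}$-norm through the independent-of-$\xi$ limits of (V2), and producing a single $n$-uniform dominating function. This is where (V1) and (V3) are indispensable --- (V1) controls the kernels against $M_n$ and (V3) controls $M_n$ against the limiting $M_V$ --- since together they are what upgrade the pointwise (V2)-convergence to $\Lb_{\alpha,\beta}$-norm convergence of the generators and thereby let Trotter--Kato close the argument.
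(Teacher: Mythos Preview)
Your strategy is correct and matches the paper's: both arguments establish the semigroup convergence via Trotter--Kato, using (V2) for the pointwise limits and (V1)+(V3) to supply an $n$-uniform majorant for dominated convergence. The only difference is in packaging: the paper works with resolvents rather than with generators on the core $B_{bs}(\Gamma_0^2)$ --- it invokes \cite[Lemma~4.3]{FKK12}, shows $R(\lambda;A_n)\to R(\lambda;A_V)$ strongly (from $M_n\to M_V$), and then verifies $B_nR(\lambda;A_n)G\to B_VR(\lambda;A_V)G$ by dominated convergence, exploiting that $\Vert B_nR(\lambda;A_n)\Vert\leq a(\alpha,\beta)-1<1$ uniformly in $n$. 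Your route through generator convergence on $B_{bs}(\Gamma_0^2)$ is equally valid and arguably more transparent; the resolvent version has the mild advantage that the uniform relative bound is already built into the Neumann-series representation, so one never has to check separately that $M_V\cdot G\in\Lb_{\alpha,\beta}$ for $G\in B_{bs}(\Gamma_0^2)$.

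One small point: your justification of $M_n(\eta)\to M_V(\eta)$ is slightly off --- $\Lb_{\alpha,\beta}$-convergence in (V2) does not by itself make the ``bracket'' $n^{|\zeta|}(\mathbb K_0^{-1}d_n^\pm(x,\cdot))(\zeta)$ pointwise bounded for $\zeta\neq\emptyset$. The clean argument is to apply (V2) directly with $\xi=(\eta^+\backslash x,\eta^-)$ (resp.\ $(\eta^+,\eta^-\backslash x)$): since $\{\emptyset\}$ is an atom of $\lambda$, $\Lb_{\alpha,\beta}$-convergence forces convergence at $\emptyset$, i.e.\ $d_n^\pm(x,\eta\backslash x)=(\mathbb K_0^{-1}d_n^\pm(x,\cdot\cup\xi))(\emptyset)\to D_x^{V,\pm}(\emptyset)$, which is exactly $M_n(\eta)\to M_V(\eta)$.
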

\begin{proof}
 We are going to apply \cite[Lemma 4.3]{FKK12} and Trotter-Kato approximation. Fix $\lambda > 0$ and denote by $R(\lambda; A_{n})$ and $R(\lambda, A_V)$ the resolvent
 for $A_{n}$ and $A_V$, respectively. Then it follows that
 $\Vert R(\lambda;A_{n})\Vert_{L(\Lb_{\alpha,\beta})},\ \Vert R(\lambda;A_V) \Vert_{L(\Lb_{\alpha,\beta})} \leq \frac{1}{\lambda}$,
 $\Vert B_{n}R(\lambda;A_{n})G\Vert_{\Lb_{\alpha,\beta}} \leq (a(\alpha,\beta) - 1)\Vert G \Vert_{\Lb_{\alpha,\beta}}$
 and likewise $\Vert B_{V}R(\lambda;A_V)G\Vert_{\Lb_{\alpha,\beta}} \leq (a(\alpha,\beta) - 1)\Vert G \Vert_{\Lb_{\alpha,\beta}}$.
 Since $M_{n} \longrightarrow M_{V}$ as $n \to \infty$, it is easy to show by dominated convergence that $R(\lambda;A_{n}) \longrightarrow R(\lambda;A_V)$
 holds strongly in $\Lb_{\alpha,\beta}$ as $n \to \infty$. Hence it remains to show the convergence
 \begin{align}\label{MCSL:14}
  B_{n}R(\lambda;A_{n})G \longrightarrow B_VR(\lambda;A_{V})G, \quad n \to \infty.
 \end{align}
 This can be proved, similarly to \cite{FKK12}, by dominated convergence.
\end{proof}

\begin{Remark}
 The proof shows that condition (V3) can be replaced by
 \[
  d_n^-(x,\eta) + d_n^+(x,\eta) \leq C(1 + |\eta|)^N e^{\tau|\eta|}, \quad x \in \R^d, \quad \eta \in \Gamma_0^2, \quad n \in \N
 \]
 for some constants $C > 0$, $N \in \N$ and $\tau \geq 0$.
\end{Remark}

\section{Examples}
We introduce four models describing the stochastic behaviour of particle systems on $\Gamma^2$.
Interactions of particles, of the same and also of different type, are modelled by the relative energy function
\[
 E_{\varphi}(x,\gamma^{\pm}) := \sum _{y \in \gamma^{\pm}}\varphi(x-y), \quad x \in \R^d, \quad \gamma^{\pm} \in \Gamma,
\]
where $\varphi$ is a symmetric, non-negative and integrable function.
Associated to above relative energy is the following functional
\begin{align}\label{GMCS:30}
 C(\varphi) := \int _{\R^d}|e^{-\varphi(x)} - 1|\,dx.
\end{align}
Since $\varphi$ is non-negative, we obtain $C(\varphi) \leq \int
_{\R^d}\varphi(x)\,dx =: \langle \varphi \rangle$. Note
that below we will give no proofs of the statements. Conditions
(A)--(C) are rather standard and can be checked by similar methods
to \cite{FKK12, FKK15}. The last condition can be checked
similarly to \cite{F16, FK16}. Note that a general criterion for
condition (D) can be also found in \cite{TV06}.

\subsection{Two-interacting BDLP-model}
Suppose that the death intensities are given by
\begin{align*}
 d^-(x,\gamma^+, \gamma^- \backslash x) &= m^- + \sum _{y \in \gamma^- \backslash x}
 a^-(x-y),
 \\ d^+(x,\gamma^+ \backslash x, \gamma^-) &= m^+ + \sum _{y \in \gamma^+ \backslash x} b^-(x-y) + \sum _{y \in \gamma^-}\varphi^-(x-y).
\end{align*}
This means that the particles in $\gamma^{\pm}$ have a random lifetime determined by the parameters $m^+, m^- > 0$.
The additional terms describe the competition of particles for resources. Namely, each particle $x \in \gamma^-$ may die due to the competition for resources with
another particle $y \in \gamma^- \backslash x$ from the same type. The rate of this event is determined by $a^-(x-y)$.
Likewise each particle $x \in \gamma^+$ may be die due to the interaction with another particle by $y \in \gamma^+ \backslash x$ with the rate $b^-(x-y)$.
Moreover, this particle $x \in \gamma^+$ may also die due to the competition for resources with another particle $y \in \gamma^-$ of different type.
The rate of this event is described by the interaction potential $\varphi^-(x-y)$.

The birth intensities are assumed to be given by
\begin{align*}
 b^-(x,\gamma) &= \sum _{y \in \gamma^-}a^+(x-y) + z,
 \\ b^+(x,\gamma) &= \sum _{y \in \gamma^+}b^+(x-y) + \sum _{y \in \gamma^-}\varphi^+(x-y).
\end{align*}
The terms corresponding to $a^+$ and $b^+$ describe the free branching of particles $\gamma^{\pm}$ independently of each other.
Additionally, each particle $y \in \gamma^-$ may create a new particle at position $x \in \R^d$ of opposite type. The spatial distribution of the new particle
is determined by the function $\varphi^+$. By definition, such events occur independently of each other.
The term including $z \geq 0$ describes the additional creation of particles by an outer source.
Suppose that $a^{\pm}, b^{\pm}, \varphi^{\pm}$ are non-negative, symmetric and integrable.
\begin{Theorem}
 Suppose that $a^{\pm}, b^{\pm}, \varphi^{\pm}$ are bounded and there exist constants $b_1, b_2 \geq 0$ and $\vartheta_1, \vartheta_2, \vartheta_3 > 0$ such that
 \begin{align*}
  \sum _{x \in \eta^+}\sum _{y \in \eta^+ \backslash x}b^+(x-y) &\leq \vartheta_1
  \sum _{x \in \eta^+}\sum _{y \in \eta^+ \backslash x}b^-(x-y) +
  b_1|\eta^+|,
  \\ \sum _{x \in \eta^-}\sum _{y \in \eta^- \backslash x}a^+(x-y) &\leq \vartheta_2
  \sum _{x \in \eta^-}\sum _{y \in \eta^- \backslash x}a^-(x-y) + b_2|\eta^-|,
 \end{align*}
 and $\varphi^+ \leq \vartheta_3 \varphi^-$ hold. Moreover, assume that the parameters satisfy the relations $\vartheta_1, \vartheta_3 < e^{\alpha}$, $\vartheta_2 < e^{\beta}$,
 \begin{align*}
  m^+ &> e^{\alpha}\langle b^- \rangle + e^{\beta}\langle \varphi^- \rangle + e^{-\alpha}b_1 + \langle b^+\rangle +
  \langle \varphi^+ \rangle ,
 \\ m^- &> e^{\beta}\langle a^- \rangle + e^{-\beta}(b_2 + z) + \langle a^+\rangle.
 \end{align*}
 Then conditions (A)--(D) hold for $\tau = 0$ and \eqref{PHDIPS:02} is fulfilled.
\end{Theorem}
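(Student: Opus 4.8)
The plan is to check the five hypotheses --- (A), (B), (C), (D) and the bound \eqref{PHDIPS:02} --- in order of increasing difficulty, the decisive input being an explicit evaluation of $c(\alpha,\beta;\cdot)$ that exploits the fact that all six rates are \emph{one-body}: a constant plus single sums of the kernels $a^\pm,b^\pm,\varphi^\pm$. Conditions (A) and (B) are then immediate. Measurability is clear, and boundedness of the kernels gives the pointwise estimate $d^++d^-+b^++b^-\le C(1+|\eta|)$, which is \eqref{PHDIPS:25} with $N=1$ and $\tau=0$; integrating it over a compact $\Lambda\subset\R^d$ and a bounded $M\subset\Gamma_0^2$ yields \eqref{PHDIPS:21}, hence (A).

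The heart of the argument is the evaluation of $c(\alpha,\beta;\eta)$ in \eqref{EQ:02}. Viewed as a function of the integration variable $\xi=(\xi^+,\xi^-)$, each intensity (for instance $d^+(x,\xi^+\cup\eta^+\setminus x,\xi^-\cup\eta^-)$) is a constant in $\xi$ plus a term additive in $\xi^+$ and a term additive in $\xi^-$; consequently $\mathbb{K}_0^{-1}$ of each intensity is supported only on $\xi=\emptyset$ and on the one-point configurations $(\{y\},\emptyset)$, $(\emptyset,\{y\})$, with the $\emptyset$-value equal to the intensity evaluated at $\eta$ and the one-point values equal to the individual kernels. Integrating against $e^{\alpha|\xi^+|}e^{\beta|\xi^-|}\,d\lambda(\xi)$ turns each one-point contribution into a factor $e^\alpha$ or $e^\beta$ times one of $\langle a^\pm\rangle,\langle b^\pm\rangle,\langle\varphi^\pm\rangle$. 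Collecting terms gives $c=M+E$: the $\emptyset$-parts of the two death terms rebuild $M(\eta)$, the $\emptyset$-parts of the two birth terms produce the quadratic interaction sums $e^{-\beta}\sum_{x\in\eta^-}\sum_{y\in\eta^-\setminus x}a^+(x-y)$, $e^{-\alpha}\sum_{x\in\eta^+}\sum_{y\in\eta^+\setminus x}b^+(x-y)$ and $e^{-\alpha}\sum_{x\in\eta^+}\sum_{y\in\eta^-}\varphi^+(x-y)$ together with the source term $e^{-\beta}z|\eta^-|$, and the one-point parts produce the linear terms carrying the constants $\langle a^\pm\rangle,\langle b^\pm\rangle,\langle\varphi^\pm\rangle$.

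To obtain \eqref{PHDIPS:02} I would show $E\le\theta M$ with $\theta<1$, so that $a(\alpha,\beta)=1+\theta\in(0,2)$. The three comparison hypotheses dominate the quadratic part of $E$ by that of $M$: $\varphi^+\le\vartheta_3\varphi^-$ and the two displayed domination inequalities replace the $a^+$-, $b^+$- and $\varphi^+$-sums by $\vartheta_2\sum a^-+b_2|\eta^-|$, $\vartheta_1\sum b^-+b_1|\eta^+|$ and $\vartheta_3\sum\varphi^-$, and the resulting prefactors $e^{-\beta}\vartheta_2$, $e^{-\alpha}\vartheta_1$, $e^{-\alpha}\vartheta_3$ are all strictly below $1$ precisely because $\vartheta_2<e^\beta$ and $\vartheta_1,\vartheta_3<e^\alpha$. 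The leftover constants $b_1,b_2$ and the genuinely linear terms are absorbed into $m^+|\eta^+|$ and $m^-|\eta^-|$: the two lower bounds on $m^-$ and $m^+$ are exactly what force the total $|\eta^-|$- and $|\eta^+|$-coefficients of $E$ strictly below $m^-$ and $m^+$ (the exponential weights $e^{\pm\alpha},e^{\pm\beta}$ carried by the one-point contributions must be matched against the terms in those bounds, which is the one place demanding genuine care). Taking $\theta$ to be the maximum of the finitely many resulting ratios gives $\theta<1$. Condition (C) then costs almost nothing, since $\tau=0$ and the comparison inequalities do not involve $\alpha,\beta$: for any $\alpha'<\alpha$, $\beta'<\beta$ the quadratic part of $c(\alpha',\beta';\cdot)$ is again a finite multiple of the quadratic part of $M$, while the linear part is dominated by $m^\pm|\eta^\pm|\le M$, so $c(\alpha',\beta';\eta)\le a(\alpha',\beta')M(\eta)$ holds with some finite $a(\alpha',\beta')>0$, which is all (C) demands.

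The genuine obstacle is (D), the conservativity of the minimal sub-stochastic semigroup $(S_\delta(t))_{t\ge0}$ generated by $\mathcal{I}_\delta$, i.e. the identity $\overline{\mathcal{I}}_\delta=\mathcal{G}_\delta$. Unlike the previous steps this is not algebraic but an analytic non-explosion statement, so I expect it to be the hard part. I would establish it by exhibiting a Lyapunov functional controlling the particle number: because the intrinsic death rates $m^\pm>0$ dominate the (localized) birth rates, the operator admits the standard Lyapunov estimate, and the conservativity criterion of \cite{TV06} --- or the sharper sufficient conditions of \cite{F16, FK16} --- then yields $\overline{\mathcal{I}}_\delta=\mathcal{G}_\delta$. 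With (A)--(D) and \eqref{PHDIPS:02} verified, Proposition \ref{PHDIPSTH:02} applies and the theorem follows.
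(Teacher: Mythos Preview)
The paper does not actually prove this theorem: at the start of Section~5 it states explicitly that no proofs will be given, that conditions (A)--(C) ``are rather standard and can be checked by similar methods to \cite{FKK12, FKK15}'', and that (D) ``can be checked similarly to \cite{F16, FK16}''. Your proposal is precisely a fleshing-out of that programme, and the strategy is the correct one. The key observation --- that for these additive one-body intensities $\mathbb{K}_0^{-1}$ is supported on $|\xi|\le 1$, so that $c(\alpha,\beta;\eta)$ decomposes as $M(\eta)$ plus a remainder $E(\eta)$ consisting of the three quadratic birth sums and a collection of linear terms --- is exactly what the cited one-component arguments do, and your term-by-term comparison $E\le\theta M$ via the three domination hypotheses and the two lower bounds on $m^\pm$ is the intended route to \eqref{PHDIPS:02}. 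Your treatment of (C) (noting that for $\tau=0$ any $\alpha'<\alpha$, $\beta'<\beta$ works with a possibly larger constant) and of (D) (Lyapunov/conservativity via the criteria in \cite{TV06,F16,FK16}) is likewise what the paper has in mind.

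One small caution at the point you flag as ``the one place demanding genuine care'': when you collect the one-point contributions, the cross-term coming from $\varphi^+$ in $b^+$ picks up a weight $e^{\beta}$ from the $\xi^-$-integration and the overall prefactor $e^{-\alpha}$, so the raw coefficient of $|\eta^+|$ contains $e^{\beta-\alpha}\langle\varphi^+\rangle$ rather than $\langle\varphi^+\rangle$. To match the stated hypothesis on $m^+$ you should absorb this term using $\varphi^+\le\vartheta_3\varphi^-$ (which gives $e^{\beta-\alpha}\langle\varphi^+\rangle\le e^{-\alpha}\vartheta_3\cdot e^{\beta}\langle\varphi^-\rangle$ with $e^{-\alpha}\vartheta_3<1$) rather than leave it as a bare $\langle\varphi^+\rangle$; otherwise the bookkeeping does not close for $\beta>\alpha$. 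This is a minor rearrangement and does not affect the overall argument.
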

\begin{Remark}
 Note that the one-component case has been considered in \cite{FKK12, KK16}.
\end{Remark}
Suppose that the conditions given above are fulfilled. Then
(V1)--(V3) are satisfied and after Vlasov scaling we arrive at the
kinetic equations
\begin{align*}
 \frac{\partial \rho_t^{-}}{\partial t}(x) = &- m^- \rho_t^{-}(x) - \rho_t^{-}(x)(a^- \ast \rho_t^{-})(x) + (a^+
 \ast \rho_t^{-})(x) + z,
 \\ \frac{\partial \rho_t^{+}}{\partial t}(x) = &- \left( m^+ + (\varphi^- \ast \rho_t^{-})(x)\right) \rho_t^{+}(x) - \rho_t^{+}(x)(b^- \ast \rho_t^{+})(x)
 \\ &+ (b^+\ast \rho_t^{+})(x) + (\varphi^+ \ast \rho_t^{-})(x).
\end{align*}
Here $(f \ast g)(x) := \int _{\R^d}f(x-y)g(y)\,dy$ denotes
the usual convolution of functions on $\R^d$.

\subsection{Two interacting Glauber-models}
Suppose that the death intensities are given by
\begin{align*}
 d^-(x,\gamma^+, \gamma^- \backslash x) &=
 \exp\left(-sE_{\psi^+}(x,\gamma^+)\right),
 \\ d^+(x,\gamma^+ \backslash x, \gamma^-) &= \exp\left(-sE_{\psi^-}(x,\gamma^-)\right),
\end{align*}
where $s \in [0,\frac{1}{2}]$ and $\psi^+, \psi^-$ are symmetric, non-negative and integrable.
The birth intensities are assumed to be of the form
\begin{align*}
 b^-(x, \gamma) &= z^- \exp\left(-(1-s)E_{\psi^+}(x, \gamma^+)\right)
 \exp\left(-E_{\phi^-}(x,\gamma^-)\right),
 \\ b^+(x, \gamma) &= z^+ \exp\left( -(1-s)E_{\psi^-}(x, \gamma^-)\right) \exp\left(-E_{\phi^+}(x,\gamma^+)\right),
\end{align*}
where $z^-, z^+ > 0$ and $\phi^-, \phi^+$ are assumed to be non-negative, symmetric and integrable.
This model describes the time-evolution of two interacting types of particles. In contrast to the previous model,
the branching of this particles is replaced by an additional source. The next theorem provides an evolution of states.
\begin{Theorem}
 Let $\phi^+, \phi^-, \psi^+, \psi^-$ be symmetric, non-negative and integrable and assume that the paramters satisfy the relations
 \begin{align}
  \label{GMCS:61} e^{e^{\alpha}C(s\psi^+)} + e^{-\beta}z^- e^{e^{\alpha}C((1-s)\psi^+)} e^{ e^{\beta}C(\phi^-)} &<
  2,
  \\ \label{GMCS:62} e^{e^{\beta}C(s\psi^-)} + e^{-\alpha}z^+ e^{e^{\beta} C((1-s)\psi^-)} e^{e^{\alpha}C(\phi^+)} &< 2.
 \end{align}
 Then conditions (A)--(D) are satisfied for $\tau = 0$ and \eqref{PHDIPS:02} holds.
\end{Theorem}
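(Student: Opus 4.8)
The plan is to verify the key bound \eqref{PHDIPS:02} together with conditions (A)--(D) by direct computation, taking advantage of the fact that all four intensities are Lebesgue--Poisson exponentials in the configuration argument. Writing $\exp(-E_\varphi(x,\gamma)) = e_\lambda(e^{-\varphi(x-\cdot)};\gamma)$, the first step is to record the elementary identity $\mathbb{K}_0^{-1}e_\lambda(g;\cdot) = e_\lambda(g-1;\cdot)$, which follows from the combinatorial formula together with $\sum_{\xi\subset\eta}e_\lambda(g_1;\xi)e_\lambda(g_2;\eta\setminus\xi) = e_\lambda(g_1+g_2;\eta)$. On $\Gamma_0^2$ I also note that a function depending on only one component is sent by $\mathbb{K}_0^{-1}$ to a function supported on the empty set in the other component, since $\sum_{\xi^-\subset\eta^-}(-1)^{|\eta^-\setminus\xi^-|} = 0^{|\eta^-|}$. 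These two observations make every inverse $K$-transform occurring in \eqref{EQ:02} completely explicit.

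The second step is to evaluate $c(\alpha,\beta;\eta)$ summand by summand. For the $d^-$ contribution the $\xi^-$-integral collapses to $1$ and the $\xi^+$-integral equals $\int_{\Gamma_0}e_\lambda(e^\alpha|e^{-s\psi^+(x-\cdot)}-1|;\xi^+)\,d\lambda(\xi^+) = \exp(e^\alpha C(s\psi^+))$, so the first summand is exactly $e^{e^\alpha C(s\psi^+)}\sum_{x\in\eta^-}d^-(x,\eta^+,\eta^-\setminus x)$. Handling the remaining three summands identically, each becomes a constant multiple of the matching sum in $M(\eta)$: for the two death summands the constants are $e^{e^\alpha C(s\psi^+)}$ and $e^{e^\beta C(s\psi^-)}$, while for the two birth summands the ratio of the $c$-summand to the corresponding death intensity carries the extra factor $\exp(-(1-2s)E_{\psi^+}(x,\eta^+))\exp(-E_{\phi^-}(x,\eta^-\setminus x))$ and its analogue. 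Here the hypothesis $s\in[0,\frac{1}{2}]$ is decisive: because $1-2s\ge0$ and $\psi^\pm,\phi^\pm\ge0$, this factor is bounded by $1$, so the two birth ratios are bounded by $e^{-\beta}z^- e^{e^\alpha C((1-s)\psi^+)}e^{e^\beta C(\phi^-)}$ and $e^{-\alpha}z^+ e^{e^\beta C((1-s)\psi^-)}e^{e^\alpha C(\phi^+)}$. Grouping the $\eta^-$-sums (summands one and three) and the $\eta^+$-sums (summands two and four) yields $c(\alpha,\beta;\eta)\le a(\alpha,\beta)M(\eta)$ with $a(\alpha,\beta) = \max\{a_-,a_+\}$, where $a_-,a_+$ are precisely the left-hand sides of \eqref{GMCS:61} and \eqref{GMCS:62}; the stated relations then give $a(\alpha,\beta)<2$, which is \eqref{PHDIPS:02}.

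Conditions (A)--(C) follow with little extra effort. Since $E_\psi\ge0$ one has $0\le d^\pm\le 1$ and $0\le b^\pm\le z^\pm$, so the integrand in \eqref{PHDIPS:21} is bounded by a constant, giving (A), and the bound \eqref{PHDIPS:25} holds with $\tau=0$ and $N=0$, giving (B). For (C) I would simply rerun the computation above with arbitrary $\alpha'<\alpha$ and $\beta'<\beta$: integrability of $\psi^\pm,\phi^\pm$ makes every $C(\cdot)$ finite, hence $a(\alpha',\beta')$ is a finite constant, and since $\tau=0$ the required inequalities $\alpha'+\tau<\alpha$ and $\beta'+\tau<\beta$ are automatic.

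The one step requiring genuine care is condition (D). The key point is that, after localization, the total birth rate is bounded uniformly in $\eta$, namely $\int_{\R^d}R_\delta(x)(b^+(x,\eta)+b^-(x,\eta))\,dx\le(z^++z^-)\langle R_\delta\rangle =: K_\delta<\infty$, using $b^\pm\le z^\pm$ and \eqref{GMCS:29}. Taking the Lyapunov functional $V(\eta)=1+|\eta|$ and using $\mathcal{J}_\delta = L_\delta$ from Lemma \ref{GMCSLEMMA:03}, the death part contributes non-positively while the birth part contributes at most $K_\delta$, so $L_\delta V(\eta)\le K_\delta\le K_\delta V(\eta)$. This Foster--Lyapunov bound, through the Thieme--Voigt criterion \cite{TV06} (see also \cite{F16,FK16}), forces $\overline{\mathcal{I}}_\delta = \mathcal{G}_\delta$, i.e. the minimal semigroup is stochastic, which is exactly (D). The main obstacle is thus conceptual rather than computational: one must recognize that bounded birth intensities dominate the particle number by a pure-birth process of bounded rate, ruling out explosion, after which everything reduces to the explicit Lebesgue--Poisson bookkeeping of the second step and the sign condition $s\le\frac{1}{2}$.
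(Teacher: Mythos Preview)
Your proof is correct and follows precisely the approach the paper indicates (the paper gives no proof here, remarking only that conditions (A)--(C) ``are rather standard and can be checked by similar methods to \cite{FKK12, FKK15}'' and that (D) ``can be checked similarly to \cite{F16, FK16}''). Your explicit computation of $c(\alpha,\beta;\eta)$ via the identity $\mathbb{K}_0^{-1}e_\lambda(g;\cdot)=e_\lambda(g-1;\cdot)$, the grouping of the four summands into the $\eta^-$-part and the $\eta^+$-part, the use of $s\le\tfrac12$ to bound the birth-to-death ratio by $1$, and the Lyapunov argument $L_\delta(1+|\eta|)\le (z^++z^-)\langle R_\delta\rangle$ for (D) are exactly the computations those references carry out in the one-component case.
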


In the case $s=0$ conditions \eqref{GMCS:61} and \eqref{GMCS:62} simplify to
\begin{align}
 \label{GMCS:54} z^- e^{e^{\alpha}C(\psi^+)} e^{ e^{\beta}C(\phi^-)} &<
 e^{\beta},
 \\ \label{GMCS:55} z^+ e^{e^{\beta} C(\psi^-)} e^{e^{\alpha}C(\phi^+)} &< e^{\alpha}.
\end{align}
Of particular interest is the special case $\phi^+ = 0 = \phi^-$,
also known as the Widom-Rowlinson model. The non-equilibrium
dynamics for this model has recently been ana\-lysed in
\cite{FKK15WRMODEL}, but without conditions \eqref{GMCS:54} and
\eqref{GMCS:55} only existence of a local evolution of correlation
functions could have been shown. Conditions \eqref{GMCS:54} and
\eqref{GMCS:55} are satisfied for $e^{-\alpha} = C(\psi^+)$ and
$e^{-\beta} = C(\psi^-)$ if
\[
 z^- < \frac{1}{eC(\psi^-)} \quad \text{and} \quad  z^+ < \frac{1}{eC(\psi^+)}
\]
are satisfied. An extension of such model with density dependent mutation rates (i.e. particles can change their types) has been recently
considered in \cite{F16b}.

\subsubsection*{Vlasov scaling}
For simplicity we consider the case $s = 0$, hence the death intensities need not to be scaled, i.e. are given by
\[
 d^-(x,\gamma^+, \gamma^- \backslash x) = 1 = d^+(x,\gamma^+ \backslash x, \gamma^-).
\]
The scaled birth intensities are given by
\begin{align*}
 b_{n}^-(x, \gamma) &= z^- \exp\left(-\frac{1}{n} E_{\psi^+}(x, \gamma^+)\right) \exp\left(-\frac{1}{n} E_{\phi^-}
 (x,\gamma^-)\right),
 \\ b_{n}^+(x, \gamma) &= z^+ \exp\left( -\frac{1}{n} E_{\psi^-}(x, \gamma^-)\right) \exp\left(-\frac{1}{n} E_{\phi^+}(x,\gamma^+)\right).
\end{align*}
All previous results can be applied which yields the mesoscopic equations, cf. \eqref{MCSL:06}
\begin{align}
 \label{TWOCOMP:01} \frac{\partial \rho_t^{-}}{\partial t}(x) &= -\rho_t^{-}(x) + z^- e^{-(\phi^- \ast \rho_t^{-})(x)}
 e^{-(\psi^+ \ast \rho_t^{+})(x)},
 \\ \label{TWOCOMP:02} \frac{\partial \rho_t^{+}}{\partial t}(x) &= - \rho_t^{+}(x) + z^+ e^{-(\phi^+ \ast \rho_t^{+})(x)} e^{-(\psi^- \ast \rho_t^{-})(x)}.
\end{align}

\subsection{BDLP-model in Glauber environment}
Let us consider death intensities given by
\begin{align*}
 d^-(x,\gamma^+, \gamma^- \backslash x) &= 1,
 \\ d^+(x,\gamma^+ \backslash x, \gamma^-) &= m^+ + \sum _{y \in \gamma^+ \backslash x}a^-(x-y) + \sum _{y \in \gamma^-}\phi(x-y),
\end{align*}
where $m^+ > 0$ and $0 \leq a^-, \phi \in L^1(\R^d)$ are symmetric. The birth intensities are assumed to be of the form
\begin{align*}
 b^-(x, \gamma) &= z^- \exp\left(- E_{\psi}(x,\gamma^-)\right),
 \\ b^+(x, \gamma) &= \sum _{y \in \gamma^+}a^+(x-y) + \sum _{y \in \gamma^-}b^+(x-y),
\end{align*}
where $z^- > 0$ and $0 \leq \psi, a^+, b^+ \in L^1(\R^d)$ are symmetric.
\begin{Theorem}
 Suppose that $a^{\pm}, b^+, \phi$ are bounded and there exist $\theta \in (0, e^{\alpha})$ and $b \geq 0$ such that
 \begin{align}\label{MCSL:72}
  \sum _{x \in \eta^+} \sum _{y \in \eta^+ \backslash x}a^+(x-y) \leq \theta \sum _{x \in \eta^+}\sum _{y \in \eta^+ \backslash x}a^-(x-y) + b|\eta^+|
 \end{align}
 is satisfied. Moreover, assume that for some $\vartheta \in (0, e^{\alpha})$ and
 \begin{align}
  \label{MCSL:73} \vartheta \phi &\geq b^+ ,
  \\ \label{MCSL:74} e^{\beta} &> z^-
  \exp\left(e^{\beta}C(\psi)\right),
  \\ \label{MCSL:75} m^+ &> e^{\alpha}\langle a^-\rangle  + e^{\beta}\langle \phi \rangle  + \langle a^+\rangle  + \langle b^+ \rangle + e^{-\alpha}b
 \end{align}
 hold. Then conditions (A)--(D) are satisfied with $\tau = 0$ and \eqref{PHDIPS:02} holds.
\end{Theorem}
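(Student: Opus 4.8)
The plan is to verify conditions (A)--(D) together with the contraction estimate \eqref{PHDIPS:02}, exactly as these are used in Theorem \ref{PHDIPSTH:01}, Lemma \ref{GMCSLEMMA:00} and Proposition \ref{PHDIPSTH:02}. Conditions (A) and (B) are immediate from the hypotheses: since $a^\pm, b^+, \phi, \psi$ are non-negative, integrable and (the first four) bounded, the intensities are measurable with finite restrictions to $\R^d \times \Gamma_0^2$, and $d^+(x,\eta) \le m^+ + \Vert a^-\Vert_\infty|\eta^+| + \Vert\phi\Vert_\infty|\eta^-|$, $b^+(x,\eta) \le \Vert a^+\Vert_\infty|\eta^+| + \Vert b^+\Vert_\infty|\eta^-|$, $d^- \equiv 1$, $b^-(x,\eta) \le z^-$ give the polynomial bound \eqref{PHDIPS:25} with $N = 1$ and, crucially, $\tau = 0$; the local integrability \eqref{PHDIPS:21} is clear since integration over a bounded $M \subset \Gamma_0^2$ involves only finitely many particles. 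The substance lies in \eqref{PHDIPS:02}, in (C), and in the non-explosion condition (D).

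For \eqref{PHDIPS:02} I would compute $M(\eta)$ and $c(\alpha,\beta;\eta)$ from \eqref{EQ:02} explicitly, exploiting the simple algebraic form of each $\mathbb{K}_0^{-1}$. Because $d^- \equiv 1$ is constant, $\mathbb{K}_0^{-1}d^-$ is supported on $\emptyset$, so the first line of \eqref{EQ:02} equals $|\eta^-|$. The intensities $d^+$ and $b^+$ are additive (first order) in the integration variable, whence $\mathbb{K}_0^{-1}$ is supported on configurations with at most one point; integrating against $e^{\alpha|\xi^+|}e^{\beta|\xi^-|}\,d\lambda(\xi)$ reproduces the corresponding parts of $M(\eta)$ plus linear remainders in $|\eta^+|$ with coefficients $e^\alpha\langle a^-\rangle$, $e^\beta\langle\phi\rangle$, $\langle a^+\rangle$ and a $\langle b^+\rangle$-term. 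Finally $b^-(x,\cdot) = z^- e_\lambda(e^{-\psi(x-\cdot)};\cdot)$ is a Lebesgue--Poisson exponential in the minus-variable, so $\mathbb{K}_0^{-1}b^- = z^-\one^*(\xi^+)\,e_\lambda(e^{-\psi(x-\cdot)}-1;\xi^-)$ up to an $\eta$-dependent prefactor; using $\int e_\lambda(e^\beta(1-e^{-\psi}))\,d\lambda = \exp(e^\beta C(\psi))$ with $C(\psi)$ from \eqref{GMCS:30} bounds the third line by $e^{-\beta}z^- e^{e^\beta C(\psi)}|\eta^-|$.

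Collecting the coefficients of $|\eta^-|$, of $m^+|\eta^+|$, of the self-interaction $\sum_{x\in\eta^+}\sum_{y\in\eta^+\setminus x}a^-(x-y)$ and of the cross term $\sum_{x\in\eta^+}\sum_{y\in\eta^-}\phi(x-y)$ then reduces \eqref{PHDIPS:02} to four scalar inequalities. Here \eqref{MCSL:72} dominates the $a^+$ self-interaction coming from $b^+$ by the $a^-$ self-interaction present in $M$, and \eqref{MCSL:73} dominates the $b^+$ cross term by the $\phi$ cross term in $M$. The resulting requirements are: $1 + e^{-\beta}z^- e^{e^\beta C(\psi)} < 2$ for $|\eta^-|$ (exactly \eqref{MCSL:74}); $1 + e^{-\alpha}\theta < 2$ and $1 + e^{-\alpha}\vartheta < 2$ for the two interaction terms (exactly $\theta,\vartheta < e^\alpha$); and, for $|\eta^+|$, the remaining constants together with the $e^{-\alpha}b$ left over from \eqref{MCSL:72} bounded by $2m^+$ (exactly \eqref{MCSL:75}). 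Thus $a(\alpha,\beta)$, the maximum of the four ratios, lies in $(0,2)$ and \eqref{PHDIPS:02} holds. Condition (C) with $\tau = 0$ asks for the same estimate at some $\alpha' < \alpha$, $\beta' < \beta$; since each coefficient depends continuously on $(\alpha,\beta)$ and all four inequalities are strict, they survive on a neighbourhood, and any $(\alpha',\beta')$ in it works, no gap being needed because $\tau = 0$.

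The main obstacle is condition (D). The localization $R_\delta$ is indispensable precisely because $b^-(x,\eta) = z^- e^{-E_{\psi}(x,\eta^-)} \to z^-$ does not decay in $x$, so $\int_{\R^d}b^-(x,\eta)\,dx = \infty$; replacing $b^\pm$ by $b_\delta^\pm = R_\delta b^\pm$ makes both total birth rates finite, with $\int_{\R^d} b_\delta^-(x,\eta)\,dx \le z^-\langle R_\delta\rangle$. To establish $\overline{\mathcal{I}}_\delta = \mathcal{G}_\delta$ I would exhibit the Lyapunov functional $V(\eta) = 1 + |\eta^+| + |\eta^-|$ and, via Lemma \ref{GMCSLEMMA:03}, estimate $L_\delta V(\eta) \le \int_{\R^d}(b_\delta^+ + b_\delta^-)(x,\eta)\,dx \le \langle a^+\rangle|\eta^+| + \langle b^+\rangle|\eta^-| + z^-\langle R_\delta\rangle \le \omega V(\eta)$, since each birth raises $V$ by one at an at most affine rate while the deaths only lower it. This affine growth of the total birth intensity yields conservativity by the Thieme--Voigt criterion \cite{TV06}, along the Lyapunov lines of \cite{F16, FK16}; this is the step that genuinely uses the dynamics rather than bookkeeping. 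Once (D) is secured, (A)--(C) and \eqref{PHDIPS:02} assemble into the assertion through Proposition \ref{PHDIPSTH:02}.
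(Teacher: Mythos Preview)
Your proposal is correct and follows precisely the approach the paper itself indicates: the paper gives no proof for this theorem, stating only that conditions (A)--(C) ``are rather standard and can be checked by similar methods to \cite{FKK12, FKK15}'' and that (D) ``can be checked similarly to \cite{F16, FK16}'' or via the general criterion in \cite{TV06}. Your term-by-term computation of $c(\alpha,\beta;\eta)$ using the explicit $\mathbb{K}_0^{-1}$ of constant, additive, and Lebesgue--Poisson-exponential intensities, together with the affine Lyapunov functional $V(\eta)=1+|\eta|$ for (D), is exactly what ``standard methods'' means here.
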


\subsubsection*{Vlasov scaling}
Suppose that $a^{\pm}, b^+, \phi, \psi$ are bounded and
\eqref{MCSL:72}--\eqref{MCSL:75} with  $z^- e^{e^{\beta}\langle
\psi \rangle}< e^{\beta}$ instead of \eqref{MCSL:74} hold. Scaling
of the potentials by $\frac{1}{n}$ yields for the death
\begin{align*}
 d^-(x,\gamma^+, \gamma^- \backslash x) &= 1,
 \\ d_{n}^+(x,\gamma^+ \backslash x, \gamma^-) &= m^+ +  \frac{1}{n} \sum _{y \in \gamma^+ \backslash x}a^-(x-y) + \frac{1}{n} \sum _{y \in \gamma^-}\phi(x-y).
\end{align*}
For the birth we obtain
\begin{align*}
 b_{n}^-(x, \gamma) &= z^- \exp\left(- \frac{1}{n}
 E_{\psi}(x,\gamma^-)\right),
 \\ b_{n}^+(x, \gamma) &= \frac{1}{n} \sum _{y \in \gamma^+}a^+(x-y) + \frac{1}{n} \sum _{y \in \gamma^-}b^+(x-y).
\end{align*}
Taking $n \to \infty$ yields $D_x^{V,-}(\eta) = 0^{|\eta|}$ and
\[
 D_x^{V,+}(\eta) = 0^{|\eta|}m^+ + 0^{|\xi^-|}\one_{\Gamma^{(1)}}(\xi^+)\sum _{y \in \xi^+}a^-(x-y) + 0^{|\xi^+|}\one_{\Gamma^{(1)}}(\xi^-)\sum _{y \in \xi^-}\phi(x-y).
\]
We obtain for the birth intensities
\begin{align*}
 B_x^{V,-}(\eta) &= z^- e_{\lambda}\left( -\psi(x-\cdot);\xi^-\right) 0^{|\xi^+|},
 \\ B_x^{V,+}(\eta) &= 0^{|\xi^-|}\one_{\Gamma^{(1)}}(\xi^+)\sum _{y \in \xi^+}a^+(x-y) + 0^{|\xi^+|} \one_{\Gamma^{(1)}}(\xi^-)\sum _{y \in \xi^-}b^+(x-y).
\end{align*}
The kinetic equation is therefore given by
\begin{align*}
 \frac{\partial \rho_t^{-}}{\partial t}(x) = &- \rho_t^{-}(x) + z^- e^{-(\psi \ast
 \rho_t^{-})(x)},
 \\ \frac{\partial \rho_t^{+}}{\partial t}(x) = &- \left( m^+ + (\phi \ast \rho_t^{-})(x)\right) \rho_t^{+}(x) - \rho_t^{+}(x)(a^- \ast \rho_t^{+})(x)
 \\ & \qquad \qquad  \qquad \qquad \qquad  \qquad \ + (a^+ \ast \rho_t^{+})(x) + (b^+ \ast \rho_t^{-})(x).
\end{align*}

\subsection{Density dependent branching in Glauber environment}
Suppose that the death intensities are given by
\begin{align*}
 d^-(x,\gamma^+, \gamma^- \backslash x) &= 1,
 \\ d^+(x,\gamma^+ \backslash x, \gamma^-) &= m^+\exp\left( E_{\phi^+}(x,\gamma^+ \backslash x)\right),
\end{align*}
where $m^+ > 0$. The birth intensities are given by
\begin{align*}
 b^-(x, \gamma) &= z^- \exp\left(-E_{\phi^-}(x,\gamma^-)\right),
 \\ b^+(x, \gamma) &= \sum _{y \in \gamma^+}\exp\left(-E_{\psi^-}(y, \gamma^-)\right) a^+(x-y)
\end{align*}
with $z^- > 0$ and $a^+, \phi^-, \phi^+, \psi^-$ symmetric, non-negative and integrable.
\begin{Theorem}\label{PHDIPSTH:13}
 Suppose that $0 \neq \phi^+, a^+$ are bounded, there exist constants $\kappa > 0$ and $b \geq 0$ such that for all $\eta^+ \in \Gamma_0$
 \begin{align}\label{GMCS:66}
  \sum _{x \in \eta^+}\sum _{y \in \eta^+ \backslash x}a^+(x-y) \leq \vartheta \sum _{x \in \eta^+}\sum _{y \in \eta^+ \backslash x}\phi^+(x-y) + b|\eta^+|
 \end{align}
 and the parameters satisfy the relations
 \begin{align*}
  e^{\beta} & > z^- \exp \left( e^{\beta}C(\phi^-)\right),
  \\ 2 & > e^{e^{\alpha}C(-\phi^+)} + \frac{\max\{ \langle a^+ \rangle + be^{-\alpha}, \vartheta e^{-\alpha}\}}{m^+} e^{e^{\beta}C(\psi^-)}.
 \end{align*}
 Then conditions (A)--(D) hold with $\tau = \Vert \phi^+ \Vert_{\infty}$ and \eqref{PHDIPS:02} is satisfied.
\end{Theorem}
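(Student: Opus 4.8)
The plan is to verify conditions (A)--(D) together with the bound \eqref{PHDIPS:02}, exactly as announced at the head of this section; the only genuinely model-specific work is the explicit evaluation of $c(\alpha,\beta;\eta)$, so I would concentrate the effort there and dispatch the rest by the standard recipes. Conditions (A) and (B) are quick: measurability holds because every intensity is assembled from sums and products of the fixed integrable functions $a^+,\phi^\pm,\psi^-$, and the local integrability in \eqref{PHDIPS:21} follows from their boundedness. For (B) I would record the pointwise estimates $d^-=1$, $b^-(x,\eta)\le z^-$ (using $\phi^-\ge0$), $b^+(x,\eta)\le\Vert a^+\Vert_\infty|\eta^+|$ (using $e^{-E_{\psi^-}}\le1$), and $d^+(x,\eta)=m^+e^{E_{\phi^+}(x,\eta^+\setminus x)}\le m^+e^{\Vert\phi^+\Vert_\infty|\eta^+|}$; the last estimate forces the exponential weight and gives \eqref{PHDIPS:25} with $N=1$ and $\tau=\Vert\phi^+\Vert_\infty$.

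The heart of the argument is the evaluation of the four integrals defining $c(\alpha,\beta;\eta)$. For the three exponential intensities, after inserting $\cdot\cup\eta$ each factor splits as a constant times a Lebesgue--Poisson exponential in $\xi$, so applying $\mathbb{K}_0^{-1}e_\lambda(1+h)=e_\lambda(h)$ componentwise together with $\int_{\Gamma_0}e_\lambda(g)\,d\lambda=\exp(\int g)$ produces closed forms: $d^-$ contributes $|\eta^-|$, $b^-$ contributes at most $e^{-\beta}z^-e^{e^\beta C(\phi^-)}|\eta^-|$, and $d^+$ contributes exactly $e^{e^\alpha C(-\phi^+)}\,m^+\sum_{x\in\eta^+}e^{E_{\phi^+}(x,\eta^+\setminus x)}$. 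The delicate term is $b^+$, whose inverse $K$-transform does not factor over the two components because $b^+$ is a sum over the $+$-points carrying the $\gamma^-$-dependent weight $e^{-E_{\psi^-}(y,\cdot)}$. Here I would compute $\mathbb{K}_0^{-1}b^+$ by hand: the alternating sum $\sum_{y\in\xi^+\subset\zeta^+}(-1)^{|\zeta^+\setminus\xi^+|}$ vanishes unless $\zeta^+=\{y\}$, so only the orders $|\xi^+|\in\{0,1\}$ survive, leaving an $|\xi^+|=0$ piece that sums $a^+(x-y)$ over $y\in\eta^+\setminus x$ and an $|\xi^+|=1$ piece that integrates the free point against $a^+$. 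Carrying out the $\xi^-$-integral produces the factor $e^{e^\beta C(\psi^-)}$, and after invoking \eqref{GMCS:66} the $b^+$ contribution is bounded by $e^{e^\beta C(\psi^-)}\bigl(e^{-\alpha}\vartheta\,S_\phi+(\langle a^+\rangle+e^{-\alpha}b)|\eta^+|\bigr)$, with $S_\phi:=\sum_{x\in\eta^+}\sum_{y\in\eta^+\setminus x}\phi^+(x-y)$.

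It remains to dominate everything by $M(\eta)=|\eta^-|+m^+\sum_{x\in\eta^+}e^{E_{\phi^+}(x,\eta^+\setminus x)}=:M^-+M^+$. Convexity $e^t\ge1+t$ gives $M^+\ge m^+(|\eta^+|+S_\phi)$, so both $|\eta^+|$ and $S_\phi$ are at most $M^+/m^+$, and the elementary inequality $c_1A+c_2B\le\max\{c_1,c_2\}(A+B)$ turns the $b^+$ bound into $\frac{\max\{\langle a^+\rangle+be^{-\alpha},\,\vartheta e^{-\alpha}\}}{m^+}e^{e^\beta C(\psi^-)}M^+$, which is exactly why the hypothesis carries a maximum rather than a sum. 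The $\eta^-$-terms then have coefficient $1+e^{-\beta}z^-e^{e^\beta C(\phi^-)}$, which is $<2$ by the first parameter relation, and the $\eta^+$-terms have coefficient $e^{e^\alpha C(-\phi^+)}+\frac{\max\{\cdots\}}{m^+}e^{e^\beta C(\psi^-)}$, which is $<2$ by the second; taking $a(\alpha,\beta)$ to be the larger of the two settles \eqref{PHDIPS:02}. For (C) I would note that $C(-\phi^+),C(\phi^-),C(\psi^-)$ are finite (e.g. $C(-\phi^+)\le e^{\Vert\phi^+\Vert_\infty}\langle\phi^+\rangle$, since $\phi^+$ is bounded and integrable), so the identical computation for any $\alpha',\beta'$ yields a finite admissible $a(\alpha',\beta')>0$; choosing $\alpha'<\alpha-\tau$ and $\beta'<\beta-\tau$ then gives (C). Finally (D) is not algebraic: with the localization $b^\pm_\delta=R_\delta b^\pm$ both birth rates become integrable, $\int b^+_\delta\,dx\le\langle a^+\rangle|\eta^+|$ and $\int b^-_\delta\,dx\le z^-\langle R_\delta\rangle$, while $d^+$ grows super-linearly, so a Lyapunov functional controlling $|\eta|$ exists and non-explosion follows from the criteria in \cite{TV06,F16,FK16}.

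I expect the main obstacle to be the $b^+$ term: it is the only intensity whose inverse $K$-transform fails to factor over the two components, and isolating its two surviving pieces (orders $0$ and $1$ in $\xi^+$) and then packaging the resulting estimate through the $\max$-inequality is precisely where the sharp constant in the theorem is produced.
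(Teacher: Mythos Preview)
The paper gives no proof of this theorem: the opening paragraph of Section~5 explicitly states that ``below we will give no proofs of the statements'' and defers (A)--(C) to the methods of \cite{FKK12,FKK15} and (D) to \cite{F16,FK16,TV06}. Your proposal is therefore not competing with a proof in the paper but supplying one, and it follows exactly the standard recipe the authors invoke.

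Your computation is correct in all essential points. The evaluation of the four pieces of $c(\alpha,\beta;\eta)$ is right, including the delicate $b^+$ term: the observation that $\mathbb{K}_0^{-1}$ applied to $\sum_{y\in\xi^+}f(y,\xi^-)$ collapses to $|\xi^+|\in\{0,1\}$ is the key identity, and it yields precisely the two-piece bound $e^{e^{\beta}C(\psi^-)}\bigl(e^{-\alpha}\vartheta S_{\phi}+(\langle a^+\rangle+e^{-\alpha}b)|\eta^+|\bigr)$ after invoking \eqref{GMCS:66}. The convexity step $M^+\ge m^+(|\eta^+|+S_{\phi})$ together with $c_1A+c_2B\le\max\{c_1,c_2\}(A+B)$ is exactly what produces the $\max$ in the hypothesis, and your reading of the two parameter inequalities as the $\eta^-$- and $\eta^+$-coefficients being strictly below $2$ is the intended one. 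The verification of (C) via finiteness of $C(-\phi^+)$ (using boundedness of $\phi^+$) and the deferral of (D) to a Lyapunov argument are both in line with what the paper expects. One cosmetic point: the statement writes ``$\kappa>0$'' but then uses $\vartheta$ in \eqref{GMCS:66} and in the second displayed inequality; this is a typo in the paper, and your proof correctly works with $\vartheta$ throughout.
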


\subsubsection*{Vlasov scaling}
Scaling all potentials by $\frac{1}{n}$ gives $d^-(x,\gamma^+, \gamma^- \backslash x) = 1$,
\[
 d^+(x,\gamma^+ \backslash x, \gamma^-) = m^+\exp\left( \frac{1}{n}E_{\phi^+}(x,\gamma^+ \backslash x)\right)
\]
and for the birth intensities
\begin{align*}
 b^-(x, \gamma) &= z^- \exp\left(-
 \frac{1}{n}E_{\phi^-}(x,\gamma^-)\right),
 \\ b^+(x, \gamma) &= \frac{1}{n}\sum _{y \in \gamma^+}\exp\left(- \frac{1}{n}E_{\psi^-}(y, \gamma^-)\right) a^+(x-y).
\end{align*}
Suppose that $0 \neq \phi^+, a^+, \phi^-, \psi^-, a^+$ are bounded, \eqref{GMCS:66} holds and the parameters satisfy the stronger relations
 \begin{align*}
  e^{\beta} &> z^- \exp \left( e^{\beta}\langle \phi^-\rangle
  \right),
  \\ 2 &> e^{e^{\alpha}\langle \phi^+\rangle } + \frac{\max\{ \langle a^+ \rangle + be^{-\alpha}, \vartheta e^{-\alpha}\}}{m^+} e^{e^{\beta}\langle \psi^-\rangle}.
 \end{align*}
Then conditions (V1)--(V3) are satisfied. This yields the kinetic
equations
\begin{align*}
 \frac{\partial \rho_t^{-}}{\partial t}(x) = &- \rho_t^{-}(x) + z^- e^{-(\phi^- \ast
 \rho_t^{-})(x)},
 \\ \frac{\partial \rho_t^{+}}{\partial t}(x) = &- m^+ \rho_t^{+}(x)e^{(\phi^+ \ast \rho_t^{+})(x)} + (a^+ \ast \rho_t^{+})(x)e^{-(\psi^- \ast \rho_t^{-})(x)}.
\end{align*}

{\it{Acknowledgments}}. The financial support through the CRC 701
within the project~A5 is gratefully acknowledged. The authors
would like to thank the reviewer for many critical remarks leading
to a significantly better presentation of this work.



\end{document}